\documentclass[11pt,final]{article}

\usepackage{amsmath}
\usepackage{amsthm}
\usepackage{enumitem}        
\usepackage{float}
\usepackage{graphicx}
\usepackage[justification=centering]{caption}
\usepackage{mathtools}
\usepackage[ruled]{algorithm2e}

\usepackage{geometry}
\usepackage{authblk}
       
\usepackage[cmintegrals]{newpxmath}  

\usepackage{bm}                      

\usepackage[style=numeric-comp,%
            abbreviate=true,   
            useprefix=true,    %
            giveninits=false,  %
            hyperref=false,    %
            maxcitenames=2,    %
            maxbibnames=20,    %
            uniquename=init,   %
            sortcites=true,    
            doi=true,          %
            isbn=true,         %
            url=false,         %
            eprint=false,      %
            backend=bibtex     %
           ]{biblatex}         
  \renewbibmacro{in:}{}
  \DefineBibliographyStrings{english}{bibliography = {References}}
  
\usepackage{xcolor}

\usepackage{dsfont}                                              
  \newcommand{\IQ}{\ensuremath\mathds{Q}}                        
  \newcommand{\IR}{\ensuremath\mathds{R}}                        
  \newcommand{\IP}{\ensuremath\mathds{P}}                        

\newcommand*{\setE}{\ensuremath{\mathcal{T}}}                    
\newcommand*{\Gammah}{\Gamma_h}                                  
\newcommand*{\Nst}{{N_\mathrm{T}}}                               
\newcommand*{\Nel}{{N_\mathrm{el}}}                              
\newcommand*{\Nloc}{{N_\mathrm{loc}}}                            
\renewcommand*{\vec}[1]{{\boldsymbol{#1}}}                       
\DeclareMathAlphabet{\mathbfsf}{\encodingdefault}{\sfdefault}{bx}{n}
\newcommand*{\vecc}[1]{\mathbfsf{#1}}                            
\newcommand*{\transpose}[1]{{#1}^\mathrm{T}}                     
\newcommand*{\normal}{\vec{n}}                                   
\newcommand*{\dd}{\mathrm{d}}                                    
\newcommand*{\grad}{\vec{\nabla}}                                
\renewcommand*{\div}{\vec{\nabla}\cdot}                          
\newcommand*{\laplace}{\upDelta}                                 
\newcommand*{\llbrace}{\lbrace\hspace*{-0.18em}\vert}
\newcommand*{\rrbrace}{\vert\hspace*{-0.18em}\rbrace}
\newcommand*{\avg}[1]{\llbrace{#1}\rrbrace}                      
\newcommand*{\jump}[1]{\left\llbracket{#1}\right\rrbracket}      
\newcommand*{\abs}[1]{\ensuremath{|#1|}}                         
\newcommand*{\norm}[2]{\|#1\|_{#2}}                              
\newcommand*{\on}[2]{\left.#1\right\vert_{#2}}                   
\newcommand{\upwind}[1]{#1^{\uparrow}}                           
\newcommand*{\prox}{\mathtt{prox}}                               

\usepackage{longtable}
\usepackage{multirow}
\usepackage{tabularx}                                            
  \newcolumntype{R}{>{\raggedleft\arraybackslash}X}
  \newcolumntype{L}{>{\raggedright\arraybackslash}X}
  \newcolumntype{C}{>{\centering\arraybackslash}X}
\usepackage{booktabs}                                            
\usepackage{rotating}                                            

\usepackage{caption}
\newtheorem{lemma}{Lemma}
\newtheorem{theorem}{Theorem}
\newtheorem{remark}{Remark}

\title{Optimization-based Cell Average Limiters as Metric Projections: Constrained Evolution and Convergence Analysis for Convection-diffusion Equations}
\author[a]{Seungmin Lee}
\author[a]{Sanghyun Lee}
\author[b]{Chen Liu}
\affil[a]{Department of Mathematics, Florida State University, Tallahassee, Florida 32306, USA.}
\affil[b]{Department of Mathematical Sciences, University of Arkansas, Fayetteville, Arkansas 72701, USA.}
\date{\today}

\begin{document}

\maketitle

\hrule\vspace{0.5\baselineskip}
\textbf{Abstract}.\quad
The optimization-based cell average limiter enforces global conservation and bounds, and has been successfully applied to discontinuous Galerkin (DG) discretizations of a wide range of nonlinear systems, from phase-field models to the compressible Navier--Stokes equations. 
In this paper, taking the heat and convection-diffusion equations as prototypes, we analyze the convergence of the scheme resulting from cell average limiting. 
Encoding conservation and admissibility in a differential inclusion identifies the limiter as a backward Euler proximal step in a Lie splitting of the constrained evolution.
This proximal step is the metric projection onto the convex admissible set, and we prove that it is nonexpansive and stable in the $L^2$ norm, and that it does not increase the total variation of the cell averages. Combined with the standard error analysis, nonexpansiveness yields an $\mathcal{O}(\tau+h^{k+1})$ estimate for implicit Euler time stepping with DG discretization of polynomial degree $k$ under a reverse CFL condition, together with an unconditional $\mathcal{O}(\tau+h^{k})$ estimate.
Numerical experiments confirm the predicted convergence rates and the enforcement of the bounds. \\

\textbf{Key words}.\quad optimization-based limiter, metric projection, nonexpansiveness, differential inclusion, discontinuous Galerkin
\vspace{0.5\baselineskip}\hrule

\section{Introduction}
Conservation and admissibility are fundamental properties of many time-dependent partial differential equations (PDEs) in science and engineering. Parabolic equations such as the heat and convection-diffusion equations satisfy the maximum principle~\cite{evans2022partial}. Hyperbolic conservation laws such as the compressible Euler equations preserve invariant domains including the positivity of density and pressure~\cite{guermond2016invariant}. 
Discontinuous Galerkin (DG) methods~\cite{cockburn2012discontinuous,shu2014discontinuous} are among the most popular techniques in scientific computing, offering local conservation, high-order accuracy, and excellent stability and flexibility. However, high-order DG methods do not enjoy discrete admissibility in general, which may lead to unphysical solutions and instability for demanding simulations. 
Limiters that enforce conservation and bounds after each time step are therefore essential for the robustness of high-order DG simulations.

\par
The cell average, namely the zeroth-order moment of a DG solution, is the essential building block of the most successful bound-preserving approaches: once the cell average belongs to the admissible set, pointwise bound-preservation is recovered as a second step. 
One of the most widely used systematic approaches for explicit methods is due to Zhang and Shu~\cite{zhang2010maximum,zhang2010positivity}: under a bound-preserving CFL, which is a sufficient condition but more restrictive than the linear stability CFL and cannot in general be relaxed to it, cell averages of the DG solution are convex combinations of special quadrature point values in the admissible set of the previous time step or Runge--Kutta stage, thus preserving the bounds on cell averages. A scaling limiter then modifies the high-order moments to recover pointwise bounds.
Flux limiting methods~\cite{guermond2016invariant,guermond2019invariant} take a convex combination of a high-order scheme and a low-order bound-preserving scheme, ensuring the discrete solution lies in the admissible set. However, a suitable low-order, i.e., first-order, bound-preserving scheme is not always available or easy to construct. 
Conservative redistribution limiting dispenses with the low-order scheme entirely. Degrees of freedom that violate the bounds borrow from admissible neighbors, with a locally balanced budget that preserves the total mass~\cite{guermond2026locally}.
For a comprehensive review of bound-preserving techniques for DG methods, we refer to~\cite{LHTZ2024FP,wu2025high} and the references therein. 

\par
In recent years, optimization-based approaches have become popular.
Bochev et al. formulate conservative remap and transport as the minimization of the distance to a high-order target subject to inequality constraints derived from physical bounds. They show that flux-corrected schemes arise by shrinking the feasible set~\cite{bochev2011formulation,bochev2013fast,bochev2014optimization,bochev2020optimization}.
Barrenechea et al. analyze algebraically stabilized finite element methods~\cite{barrenechea2016analysis}, and propose a nodally bound-preserving finite element method whose solution is the orthogonal projection of the exact solution onto the closed convex set of bound-preserving finite element functions~\cite{barrenechea2024nodally}.
Van der Vegt et al. couple positivity constraints to implicit DG discretizations through a KKT system solved by an active set semismooth Newton method~\cite{van2019positivity}.
Yan and Xia impose cell average bounds through a KKT limiter in implicit BDF-LDG discretizations and establish optimal error estimates~\cite{yan2025analysis}. Their constraints are coupled into the implicit system. Each step is one nonlinear solve rather than postprocessing and no conservation constraint is imposed.
For a non-exclusive list of further optimization-related algorithms, we refer to~\cite{guba2014optimization,ruppenthal2023optimal,kirby2024high,keith2026proximal}.

\par
An optimization-based cell average limiter seeks the piecewise constant polynomial that minimizes the $L^2$ distance to the cell average of the high-order DG solution, subject to the conservation and invariant-domain-preserving constraints. This limiter was introduced in~\cite{liu2024simple} for phase-field multiphase flow and subsequently extended to Fokker--Planck~\cite{LHTZ2024FP}, compressible Navier--Stokes~\cite{liu2024optimization}, compressible Euler~\cite{liu2025efficient}, and magnetohydrodynamics equations~\cite{liu2026efficient}. The limiter has also been implemented as a postprocessing module in the open-source framework Trixi.jl~\cite{ranocha2022adaptive,schlottkelakemper2021purely,schlottkelakemper2025trixi}.
The available accuracy results concern a single application. In a time-dependent computation the modifications are applied at every time step, and their cumulative effect over the whole simulation is a different question. Despite the practical success of the limiter, we are not aware of a convergence analysis that accounts for its repeated application.

\par
The optimization-based cell average limiter is broadly applicable. It is independent of the underlying PDE. The only problem-specific ingredient is the projection onto the admissible set, while the conservation constraint and the optimization solver are unchanged. It is also independent of the time discretization, since it acts as a postprocessing step on the computed solution. Explicit strong-stability-preserving (SSP) methods may then run at the linear stability CFL, non-SSP explicit methods are covered, and for semi-implicit or implicit schemes admissibility is enforced where neither a bound-preserving CFL nor a monotone high-order discretization can be constructed.
It is practical for large-scale simulations. The minimization problem arising from the cell average limiter has one unknown per element, an admissible-set constraint that decouples elementwise, and a single linear constraint for conservation. It therefore admits efficient splitting methods of $\mathcal{O}(\Nel)$ complexity, where $\Nel$ is the total number of mesh elements. Its cost is negligible compared with that of advancing the PDE~\cite{liu2024simple,liu2024optimization,LHTZ2024FP,liu2025efficient}. Enforcing bound-preservation directly at all degrees of freedom would instead give a minimization with $\Nel\Nloc$ unknowns, where $\Nloc$ is the number of degrees of freedom per element, in which the admissible-set constraint imposes one inequality at every quadrature point on the $\Nloc$ local coefficients, so that even the elementwise projection becomes a quadratic program.
Since the cell average is in addition the quantity on which the Zhang--Shu scaling limiter rests, this paper focuses exclusively on the cell average limiter.

\par
We introduce a differential inclusion framework that provides an analytical foundation for the error analysis. For an equation $\partial_t u + \mathcal{A}u = 0$ with admissible set $G$, we define the constraint set
\begin{align*}
A = \{v \in L^2(\Omega):~ (v, 1) = (u^0, 1) ~~\text{and}~~ v(\vec{x})\in G,~ \forall\vec{x}\in\Omega\}
\end{align*}
and rewrite the PDE as the differential inclusion $\partial_t u + \mathcal{A}u + \partial\iota_A(u) \ni 0$, where $\iota_A$ is the indicator function of $A$ and $\partial\iota_A$ is its subgradient. 
Since the exact solution satisfies $u^\mathrm{exact}(t) \in A$ for all $t \in [0,T]$, one has $0 \in \partial\iota_A(u^\mathrm{exact}(t))$, so $u^\mathrm{exact}$ is also a solution of the differential inclusion. The two formulations are equivalent whenever the differential inclusion admits a unique solution. For the parabolic equations studied in this paper, the equivalence is ensured by the maximum principle, see Section~\ref{sec:equivalent_DI}.
This framework gives a natural operator splitting: the DG step advances the PDE subproblem $\partial_t u + \mathcal{A}u = 0$, and the cell average limiter enforces the constraint subproblem $\partial_t u \in -\partial\iota_A(u)$ via the backward Euler proximal step. The cell average limiter is thus identified as a backward Euler / $\IP_0$ finite volume discretization of the subgradient flow $\partial_t u \in -\partial\iota_A(u)$, with the high-order moments of the DG polynomial carried over unchanged. 
For implicit time integration of parabolic PDEs such as the heat equation or the convection-diffusion equation, the sequential two-stage scheme is a Lie splitting of the differential inclusion:
\begin{align*}
(I + \tau\mathcal{A}) u^{n+1} = U^n \quad\text{and}\quad 
U^{n+1} = \prox^\tau_{\iota_A}(u^{n+1}).
\end{align*}
For the explicit Euler time integration, the associated scheme is an IMEX discretization of the differential inclusion:
\begin{align*}
U^{n+1} = \prox^\tau_{\iota_A} (U^n - \tau \mathcal{A}(U^n)).
\end{align*}
While the differential inclusion framework applies broadly, the rigorous error analysis in this paper is carried out for the heat equation and the convection-diffusion equation. 
This is a deliberate choice. Our aim is not a new high-order bound-preserving method, but an understanding of the cell average limiter itself, and the simplest model together with the simplest time integrator is what makes its properties visible.

\par
The main contributions of this paper are as follows. First, we identify the optimization-based cell average limiter as the metric projection onto the discrete admissible set. Encoding conservation and admissibility in a differential inclusion further identifies this projection as the backward Euler proximal step in a Lie splitting of the constrained evolution.
Second, we prove that the cell average limiter is nonexpansive in the $L^2$ norm. As a direct consequence, the limiter preserves $L^2$ accuracy and $L^2$ stability when applied as a postprocessing step to a DG discretization of a steady elliptic problem, see Remark~\ref{rem:steady_elliptic}. We also show that the limiter does not increase the total variation of the cell averages, see Theorem~\ref{thm:tvdm}.
Third, combining nonexpansiveness with the standard DG error analysis, we establish the following error estimates. For both the heat equation and the convection-diffusion equation, the fully discrete scheme, comprising implicit Euler time stepping, interior penalty DG spatial discretization of polynomial degree $k$, and the optimization-based cell average limiter, satisfies
\begin{align*}
\max_{0 \leq n \leq \Nst} \norm{U_h^n - u^\mathrm{exact}(\cdot, t^n)}{L^2(\Omega)} \leq C(\tau + h^{k+1}),
\end{align*}
under a reverse CFL condition $\tau \geq c h^{2(k+1)/3}$, and converges unconditionally at the suboptimal rate $\mathcal{O}(\tau + h^{k})$. 
The cost of repeated limiting is isolated in a single quantity, the cell average of the elliptic projection error, charged once per time step, see Theorem~\ref{thm:Theta_bound}. Controlling its accumulation is what produces the reverse CFL condition, which is not needed for convergence but buys the additional power of $h$.
To put this in context, enforcing bounds via implicit Euler time stepping combined with matrix monotonicity typically requires $\tau \geq c h^2$ regardless of polynomial degree~\cite{hu2023positivity,shen2022discrete,liu2024structure}.
Our condition $\tau \geq c h^{2(k+1)/3}$ coincides with this at $k = 2$ and is progressively more permissive for higher-order methods ($k \geq 3$), i.e., higher spatial accuracy allows smaller time steps without sacrificing optimal convergence.

\par
The remainder of the paper is organized as follows. Section~\ref{sec:PDE_and_DI} introduces the constrained evolution framework, the differential inclusion, and the operator splitting scheme. Section~\ref{sec:cell_avg_limiter} establishes properties of the cell average limiter, including nonexpansiveness. Section~\ref{sec:error_analysis} derives conditional optimal error estimates for the heat equation and the convection-diffusion equation. An unconditional suboptimal error estimate is proved in Appendix~\ref{app:unconditional_subopt}. Section~\ref{sec:numerical_experiments} presents numerical experiments confirming the theoretical convergence rates and demonstrating the bound-preserving property. Section~\ref{sec:conclusion} draws conclusions.

\section{Constrained evolution and discretization}\label{sec:PDE_and_DI}

\subsection{Encoding admissibility in a differential inclusion}\label{sec:equivalent_DI}
For many PDEs in science and engineering, the physically meaningful solutions satisfy global conservation and belong to a convex admissible set $G$. 
Let us consider a well-posed and conservative time-dependent PDE with appropriate initial data $u^0$ and boundary conditions on a convex bounded polygonal domain $\Omega \subset \IR^d$ over the time interval $[0,T]$, as follows
\begin{align}\label{eq:abstract_pde}
\partial_t u + \mathcal{A}u = 0,
\end{align}
where $u$ is the unknown and $\mathcal{A}$ denotes a spatial differential operator. Let $(\cdot,\cdot)$ denote the $L^2$ inner product over domain $\Omega$. We say model \eqref{eq:abstract_pde} is well-posed if it admits a unique solution, conservative if $(u(t),1) = (u^0,1)$ for all $t\in[0,T]$, and invariant-domain-preserving if $u^0(\vec{x})\in G$ for all $\vec{x}\in\Omega$ implies $u(\vec{x},t)\in G$ for all $\vec{x}\in\Omega$ and $t\in[0,T]$.
\par 
Define the indicator function $\iota_{A}$ of a set $A$ as: $\iota_{A}(u) = 0$, if $u \in A$, otherwise $\iota_{A}(u) = +\infty$. Let $\partial\iota_{A}$ be the subgradient of $\iota_{A}$, namely
\begin{align}\label{eq:def_subgradient}
\xi \in \partial\iota_{A}(u) \quad\Leftrightarrow\quad 
u \in A \quad\text{and}\quad
(\xi, w - u) \leq 0 \quad \forall w \in A.
\end{align}
Notice that $0 \in \partial\iota_{A}(u)$ when $u \in A$.
We consider a constrained evolution approach by encoding conservation and admissibility in a differential inclusion. Define the set
\begin{align}\label{eq:set_A}
A = \{v \in L^2(\Omega):~ (v, 1) = (u^0, 1) ~~\text{and}~~ v(\vec{x})\in G,~ \forall\vec{x}\in\Omega\}.
\end{align}
Since the admissible set $G$ is convex, the set $A$ is convex. In addition, any $v\in A$ has the same total mass as $u^0$ and satisfies $v(\vec{x})\in G$ for all $\vec{x}\in\Omega$.
With this constraint set, we define the following differential inclusion
\begin{align}\label{eq:abstract_pde_modified}
\partial_t u + \mathcal{A}{u} + \partial\iota_{A}(u) \ni 0.
\end{align}
Since model \eqref{eq:abstract_pde} is conservative and invariant-domain-preserving, its exact solution $u^\mathrm{exact}$ satisfies $u^\mathrm{exact}(t)\in A$ for all $t\in[0,T]$. Hence $0\in\partial\iota_{A}(u^\mathrm{exact}(t))$ and $u^\mathrm{exact}$ always satisfies \eqref{eq:abstract_pde_modified}.
Provided \eqref{eq:abstract_pde_modified} admits a unique solution, the two formulations are equivalent. Well-posedness of \eqref{eq:abstract_pde_modified} depends on the properties of $\mathcal{A}$ and $G$.
Within this abstract framework, in the rest of this paper, we focus on the diffusion operator $\mathcal{A} = -\laplace$ and the convection-diffusion operator $\mathcal{A} = -D\laplace + \vec{b}\cdot\grad$, where $\vec{b}$ is divergence-free, i.e., $\div{\vec{b}} = 0$, and has vanishing normal component on the boundary, i.e., $\vec{b}\cdot\vec{n} = 0$.

\paragraph{Equivalent differential inclusion.}
Let us consider a heat equation on domain $\Omega\subset\IR^d$ over the time interval $[0,T]$ with initial and homogeneous Neumann boundary conditions as follows:
\begin{subequations}\label{eq:heat}
\begin{align}
\partial_t u - \laplace u &= 0 && \text{in}~\Omega\times(0,T],\\
u &= u^0 && \text{in}~\Omega\times\{0\}, \\
\grad{u}\cdot\normal &= 0 && \text{on}~\partial\Omega\times(0,T]. \label{eq:heat_BC}
\end{align}
\end{subequations}
Here, $\normal$ is the unit outer normal of domain $\Omega$.
The maximum principle \cite{evans2022partial} ensures $u^\mathrm{exact}(t)\in[m,M]$ for all $t\in[0,T]$, provided $u^0(\vec{x})\in[m,M]$ for all $\vec{x}\in\Omega$. Together with mass conservation, this gives $u^\mathrm{exact}(t)\in A$ for all $t\in[0,T]$, where $G=[m,M]$.
The differential inclusion \eqref{eq:abstract_pde_modified} specializes to
\begin{align}\label{eq:heat_modified}
\partial_t u - \laplace u + \partial\iota_{A}(u) \ni 0,
\end{align}
subject to the same initial and boundary conditions as \eqref{eq:heat}.
Since the heat equation is well-posed, to show the equivalence between \eqref{eq:heat} and \eqref{eq:heat_modified}, we only need to show the solution of \eqref{eq:heat_modified} is unique.
Notice $u^\mathrm{exact}$ is a solution of \eqref{eq:heat}, so it solves \eqref{eq:heat_modified}. 
The uniqueness follows from the energy argument given below for the convection-diffusion equation with $D=1$ and $\vec{b} = \vec{0}$.
\par
For the convection-diffusion equation, let us consider the following PDE model with initial and no-flux boundary conditions:
\begin{subequations}\label{eq:conv_diff}
\begin{align}
\partial_t u - D\laplace u + \vec{b}\cdot\grad u &= 0 && \text{in}~\Omega\times(0,T],\\
u &= u^0 && \text{in}~\Omega\times\{0\},\\
(-D\grad{u} + \vec{b}u)\cdot\normal &= 0 && \text{on}~\partial\Omega\times(0,T]. \label{eq:conv_diff_BC}
\end{align}
\end{subequations}
Here, the coefficient $D > 0$ is a constant and $\vec{b}\in W^{1,\infty}(\Omega)^d$ satisfies the divergence-free condition $\div{\vec{b}} = 0$ and $\vec{b}\cdot\normal = 0$ on $\partial\Omega$.
The maximum principle \cite{evans2022partial} gives: 
since $\vec{b}\cdot\normal=0$ on $\partial\Omega$ as assumed, if $u^0(\vec{x})\in[m,M]$ for all $\vec{x}\in\Omega$, then $u^\mathrm{exact}(t)\in[m,M]$ for all $t\in[0,T]$ and we take $G=[m,M]$. 
Thus, together with mass conservation, $u^\mathrm{exact}(t)\in A$ for all $t\in[0,T]$.
The differential inclusion \eqref{eq:abstract_pde_modified} specializes to
\begin{align}\label{eq:conv_diff_modified}
\partial_t{u} - D\laplace{u} + \vec{b}\cdot\grad u + \partial\iota_{A}(u) \ni 0,
\end{align}
subject to the same initial and boundary conditions as \eqref{eq:conv_diff}. Since \eqref{eq:conv_diff} is well-posed, to show the equivalence between \eqref{eq:conv_diff} and \eqref{eq:conv_diff_modified}, we only need to show \eqref{eq:conv_diff_modified} has a unique solution.
Since $u^\mathrm{exact}$ is the solution of \eqref{eq:conv_diff}, the maximum principle gives $u^\mathrm{exact}(t)\in A$, hence $0\in\partial\iota_{A}(u^\mathrm{exact}(t))$, and $u^\mathrm{exact}$ satisfies \eqref{eq:conv_diff_modified} with the subgradient selection equal to zero.
Let $u$ denote a solution of \eqref{eq:conv_diff_modified}. Set $w = u - u^\mathrm{exact}$. Then, there exists $\xi\in\partial\iota_{A}(u)$, such that
\begin{align}\label{eq:conv_diff_diff}
\partial_t w - D\laplace w + \vec{b}\cdot\grad w + \xi = 0.
\end{align}
Since both $u$ and $u^\mathrm{exact}$ satisfy the no-flux boundary condition, so does $w$. 
Let $\norm{\cdot}{L^2(O)}$ denote the $L^2$ norm over set $O$. Throughout this paper, when the underlying spatial domain is $\Omega$, we omit it from the notation of norms and seminorms, for instance we write $\norm{\cdot}{L^2}$ and $\abs{\cdot}_{H^{k+1}}$.
Taking the inner product of \eqref{eq:conv_diff_diff} with $w$, integrating by parts, and substituting $D\nabla w\cdot\normal=(\vec{b}\cdot\normal)w$ from the no-flux condition, we get
\begin{align*}
\frac{1}{2}\frac{\dd}{\dd t}\norm{w}{L^2}^2 + D\norm{\grad w}{L^2}^2 - \frac{1}{2}\int_{\partial\Omega}(\vec{b}\cdot\normal)w^2 + (\xi, w) = 0.
\end{align*}
From $\xi\in\partial\iota_{A}(u)$ and $u^\mathrm{exact}\in A$, by \eqref{eq:def_subgradient} we have $(\xi, u^\mathrm{exact} - u)\leq 0$, namely $(\xi,w)\geq 0$. Since $\vec{b}\cdot\normal=0$ holds on $\partial\Omega$, the boundary term vanishes, so $\frac{\dd}{\dd t}\norm{w}{L^2}^2\leq 0$ and $w(0)=0$ implies $w=0$.

\subsection{Operator splitting and numerical scheme}\label{sec:operator_splitting}
To construct an efficient numerical solver, let us split the differential inclusion \eqref{eq:abstract_pde_modified} into an unconstrained PDE and a subgradient flow of the indicator function. We have two subproblems
\begin{subequations}\label{eq:subproblem_HL}
\begin{align}
\mathcal{H}&: \quad \partial_t u + \mathcal{A}{u} = 0, \\
\mathcal{L}&: \quad \partial_t u + \partial\iota_{A}(u) \ni 0. \label{eq:subproblem_L}
\end{align}
\end{subequations}
We partition the time interval $(0,T]$ uniformly and let $\tau$ denote the time step size. 
Given solution $U^n$ at time step $t^n$ ($n\geq 0$), the Lie splitting advances the solution to time step $t^{n+1}$ by solving $\mathcal{H}$ and $\mathcal{L}$ sequentially. To be more specific, we have $U^{n+1} = \mathcal{L}(\tau)\circ\mathcal{H}(\tau) U^n$, namely we solve $\mathcal{H}$ on $[t^n, t^{n+1}]$ with initial $U^n$ to obtain an intermediate state $u^{n+1}$, then solve $\mathcal{L}$ on $[t^n, t^{n+1}]$ with initial $u^{n+1}$ to obtain $U^{n+1}$.
The step $\mathcal{L}$ projects $u^{n+1}$ onto $A$, yielding $U^{n+1}\in A$.

\paragraph{Subproblem $\mathcal{H}$.}
Let $\setE_h = \{E_i\}$ be a family of conforming nondegenerate (regular) quasi-uniform meshes of the domain $\Omega$ with maximum element diameter $h$. Let $\Gammah$ be the set of interior faces. For each interior face $e \in \Gammah$ shared by elements $E_{i^-}$ and $E_{i^+}$, with $i^- < i^+$, we define a unit normal vector $\normal_e$ that points from $E_{i^-}$ into $E_{i^+}$. For a boundary face $e \subset \partial\Omega$, we take $\normal_e$ to be the unit outer normal to $\partial\Omega$.
Let $\IQ_k(E_i)$ denote the space of tensor product polynomials of degree at most $k$ on an element $E_i$.
Define the following broken polynomial spaces, for any $k \geq 1$,
\begin{align*}
V_h &= \{v_h \in L^2(\Omega):~ \on{v_h}{E_i}\in\IQ_k(E_i),~ \forall E_i\in\setE_h\} \\
V_h^0 &= \{v_h\in V_h:~ (v_h, 1) = 0\}.
\end{align*}
For meshes with triangle and tetrahedron, the space $\IP_k(E_i)$, i.e. the set of all polynomials of degree at most $k$ on an element $E_i$, is used instead of $\IQ_k(E_i)$ in above definition.
The average and jump for $v$ on a boundary face coincide with its trace; and on interior faces they are defined by
\begin{equation*}
\avg{v}|_e = \frac{1}{2}\on{v}{E_{i^-}} + \frac{1}{2}\on{v}{E_{i^+}}, \quad
\jump{v}|_e = \on{v}{E_{i^-}} - \on{v}{E_{i^+}}, \quad 
\forall e = \partial E_{i^{-}}\cap\partial E_{i^{+}}.
\end{equation*}
We utilize the symmetric interior penalty DG (SIPG) for spatial discretization. For the heat equation with boundary condition \eqref{eq:heat_BC}, the bilinear form of $-\laplace{u}$ is defined by 
\begin{align*}
a_\mathcal{D}(u, v) &= \sum_{E \in \setE_h} \int_E \grad u \cdot \grad v - \sum_{e \in \Gammah} \int_e \avg{\grad{u}\cdot\normal_e}\jump{v} - \sum_{e \in \Gammah} \int_e\avg{\grad{v}\cdot\normal_e}\jump{u} + \frac{\sigma}{h}\sum_{e \in \Gammah} \int_e\jump{u}\jump{v},
\end{align*}
where $\sigma > 0$ is a penalty parameter. Define the energy semi-norm on $V_h$, which is a norm on space $V_h^0$, as follows
\begin{align*}
\norm{v}{\mathrm{DG}}^2 = \sum_{E \in \setE_h} \norm{\grad v}{L^2(E)}^2 + \frac{\sigma}{h}\sum_{e \in \Gammah} \norm{\jump{v}}{L^2(e)}^2.
\end{align*}
The implicit Euler time stepping for the heat equation reads: given $U_h^n\in V_h$, find $u_h^{n+1}\in V_h$ such that
\begin{align*}
(u_h^{n+1} - U_h^n, v_h) + \tau a_\mathcal{D}(u_h^{n+1}, v_h) = 0, \quad \forall v_h \in V_h.
\end{align*}
Recall the prescribed divergence-free vector field $\vec{b}\in W^{1,\infty}(\Omega)^d$. The bilinear form of the advection term $\vec{b}\cdot\grad{u}$ discretized with upwind flux is defined by
\begin{align*}
a_{\mathrm{adv}}(u, v) = -\sum_{E\in\setE_h}\int_E u \vec{b}\cdot\grad v + \sum_{e\in\Gammah}\int_e \upwind{u}(\vec{b}\cdot\normal_e)\jump{v},
\end{align*}
where the upwind of $u$ on an interior face $e = E_{i^-}\cap E_{i^+}$ with normal $\normal_e = \on{\normal}{E_{i^-}}$ is defined by
\begin{align*}
\upwind{u} = \begin{cases}
\on{u}{E_{i^-}} & \text{if}~ \vec{b}\cdot\normal_e \geq 0, \\
\on{u}{E_{i^+}} & \text{if}~ \vec{b}\cdot\normal_e < 0.
\end{cases}
\end{align*}
Note that with the no-flux boundary condition \eqref{eq:conv_diff_BC}, integrating by parts shows that the boundary face contributions from $-D\laplace{u} + \vec{b}\cdot\grad{u}$ cancel. Hence, the sums in $a_\mathcal{D}$ and $a_{\mathrm{adv}}$ are taken over interior faces $\Gammah$ only.
The implicit Euler time discretization for the convection-diffusion equation \eqref{eq:conv_diff} reads: given $U_h^n\in V_h$, find $u_h^{n+1}\in V_h$ such that
\begin{align*}
(u_h^{n+1} - U_h^n, v_h) + \tau D a_\mathcal{D}(u_h^{n+1}, v_h) + \tau a_{\mathrm{adv}}(u_h^{n+1}, v_h) = 0, \quad \forall v_h \in V_h.
\end{align*}
The intermediate state $u_h^{n+1} \in V_h$ does not in general satisfy $u_h^{n+1} \in A$. We therefore solve the subproblem $\mathcal{L}$, which projects $u_h^{n+1}$ onto set $A$.

\paragraph{Subproblem $\mathcal{L}$.}
To efficiently solve the subproblem $\mathcal{L}$, we apply the implicit Euler method with a finite volume spatial discretization.
Let $\IP_0(\setE_h)$ denote the set of piecewise constant functions on $\setE_h$. We approximate the continuous constraint set $A$ in \eqref{eq:set_A} by the associated discrete piecewise constant analogue $\Lambda = \Lambda_1\cap \Lambda_2$, where
\begin{align*}
\Lambda_1 &= \{v_h \in \IP_0(\setE_h):~ (v_h,1) = (u^0,1)\}\\
\text{and}\quad
\Lambda_2 &= \{v_h \in \IP_0(\setE_h):~ m \leq \on{v_h}{E} \leq M,~ \forall E \in \setE_h \}.
\end{align*}
Note that $\Lambda_1$ represents the conservation constraint and $\Lambda_2$ represents the bound-preserving constraint on cell averages.
After obtaining the discrete solution in $\IP_0(\setE_h)$, we recover high-order spatial accuracy by adjusting the cell averages.
For a given function $v$, its cell average $\overline{v}$ is piecewise constant, such that on each element $E \in \setE_h$,  
\begin{align*}
\on{\overline{v}}{E} = \frac{1}{\abs{E}}\int_E v.
\end{align*}
Applying implicit Euler time stepping to \eqref{eq:subproblem_L}, the fully discrete scheme reads: given $u_h^{n+1} \in V_h$, compute $\overline{U_h^{n+1}} \in \IP_0(\setE_h)$, such that
\begin{equation}\label{eq:resolvent_eq}
\frac{1}{\tau}(\overline{U_h^{n+1}} - \overline{u_h^{n+1}}) + \xi_h^{n+1} = 0, ~\text{where}~ 
\xi_h^{n+1} \in \partial\iota_{\Lambda}(\overline{U_h^{n+1}})
\quad\Rightarrow\quad
\overline{U_h^{n+1}} + \tau \partial\iota_{\Lambda}(\overline{U_h^{n+1}}) \ni \overline{u_h^{n+1}}.
\end{equation}
Since $\Lambda$ is nonempty, closed, and convex, the solution of \eqref{eq:resolvent_eq} is the resolvent of the maximal monotone operator $\partial\iota_{\Lambda}$ applied to $\overline{u_h^{n+1}}$. We have
\begin{align}\label{eq:cell_avg_limiter_model0}
\overline{U_h^{n+1}} 
&= (I + \tau \partial\iota_{\Lambda})^{-1}(\overline{u_h^{n+1}}) 
= \prox^\tau_{\iota_{\Lambda}} (\overline{u_h^{n+1}}) \nonumber\\
&= \operatorname*{arg\,min}_{x_h \in \IP_0(\setE_h)}
\Big(\frac{1}{2\tau}\norm{x_h - \overline{u_h^{n+1}}}{L^2}^2 + \iota_{\Lambda}(x_h)\Big)\\
&= \operatorname*{arg\,min}_{x_h \in \Lambda} \frac{1}{2}\norm{x_h - \overline{u_h^{n+1}}}{L^2}^2.\nonumber
\end{align}
The objective in \eqref{eq:cell_avg_limiter_model0} is strongly convex. Thus, the minimizer exists and is unique.
Finally, we reconstruct the high-order solution in space by adjusting the cell average of $u_h^{n+1}$, as follows
\begin{align}\label{eq:adjust_cell_avg}
U_h^{n+1} = u_h^{n+1} - \overline{u_h^{n+1}} + \overline{U_h^{n+1}}.
\end{align}  
The formulations defined in \eqref{eq:cell_avg_limiter_model0} and \eqref{eq:adjust_cell_avg} form the optimization-based cell average limiter, which guarantees that the cell averages stay in the admissible set. 
Note that in \eqref{eq:resolvent_eq}, \eqref{eq:cell_avg_limiter_model0}, and the right-hand side of \eqref{eq:adjust_cell_avg}, the symbol $\overline{U_h^{n+1}}$ is used as a whole to denote the projection of $\overline{u_h^{n+1}}$ onto $\Lambda$, rather than the cell average operator applied to $U_h^{n+1}$, which has not yet been defined at that point. This is the only place where the overline appears as part of a symbol rather than as an operator. Elsewhere, overlines denote taking cell averages.
\begin{theorem}\label{thm:mass_conservation}
The PDE models \eqref{eq:heat} and \eqref{eq:conv_diff} and associated numerical schemes enjoy global conservation. In addition, the cell averages of the exact solutions stay in set $\Lambda$.
\end{theorem}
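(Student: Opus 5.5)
The proof splits into three parts: conservation for the PDE models, conservation for the numerical schemes, and membership of the exact cell averages in $\Lambda$. Throughout, the test functions are $v=1$ in the continuous setting and $v_h = 1 \in V_h$ in the discrete one.

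\textbf{Continuous models.} I integrate \eqref{eq:heat} over $\Omega$ and apply the divergence theorem. The Neumann condition \eqref{eq:heat_BC} gives $\frac{\dd}{\dd t}(u,1) = \int_{\partial\Omega}\grad u\cdot\normal = 0$. For \eqref{eq:conv_diff}, the condition $\div{\vec b}=0$ lets me write $\vec b\cdot\grad u = \div(\vec b u)$. Then
\[
\frac{\dd}{\dd t}(u,1)=\int_{\partial\Omega}(D\grad u-\vec b u)\cdot\normal=0
\]
by the no-flux condition \eqref{eq:conv_diff_BC}. Hence $(u(t),1)=(u^0,1)$ for both models.

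\textbf{Numerical schemes.} I take $v_h=1$ in the implicit Euler step. Since $\grad 1=0$ and $\jump{1}=0$ on every interior face, all terms of $a_\mathcal{D}(u_h^{n+1},1)$ vanish. The same holds for $a_{\mathrm{adv}}(u_h^{n+1},1)$, because its volume term contains $\grad 1$ and its face term contains $\jump{1}$. This gives $(u_h^{n+1},1)=(U_h^n,1)$.

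Next, the limiter \eqref{eq:adjust_cell_avg} gives
\[
(U_h^{n+1},1)=(u_h^{n+1},1)-(\overline{u_h^{n+1}},1)+(\overline{U_h^{n+1}},1).
\]
The first two terms cancel, since cell averaging preserves element integrals. The last term equals $(u^0,1)$ because $\overline{U_h^{n+1}}\in\Lambda\subset\Lambda_1$. So $(U_h^{n+1},1)=(u^0,1)$ for every $n\ge 0$. For $n=0$ I need $(U_h^0,1)=(u^0,1)$. This holds if $U_h^0$ is the $L^2$ projection of $u^0$, or if $U_h^0$ is also limited; I will state this as the assumed choice of initial data. The argument is identical for both schemes.

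\textbf{Exact cell averages.} On each element $E$, the maximum principle gives $m\le u^\mathrm{exact}(\cdot,t)\le M$ pointwise. Averaging over $E$ preserves these bounds, so $\overline{u^\mathrm{exact}(t)}\in\Lambda_2$. Also $(\overline{u^\mathrm{exact}(t)},1)=(u^\mathrm{exact}(t),1)=(u^0,1)$ by the continuous conservation shown above, so $\overline{u^\mathrm{exact}(t)}\in\Lambda_1$. Therefore $\overline{u^\mathrm{exact}(t)}\in\Lambda$.

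\textbf{Main obstacle.} The steps are essentially computational. The only delicate point is the boundary handling in the convection case: the boundary terms must cancel through the combined no-flux condition rather than separately. On the discrete side, I must check that the face sums run only over interior faces, so that testing with $1$ leaves no boundary residue.
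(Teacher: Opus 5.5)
Your proposal is correct and follows the paper's proof. Both arguments combine three facts: the conservative $L^2$-projected initial data, mass conservation of the DG step, and membership of the limited cell averages in $\Lambda_1$; these give $(u_h^n,1)=(U_h^n,1)=(u^0,1)$ for all $n$, and then the maximum principle with continuous mass conservation places $\overline{u^\mathrm{exact}}$ in $\Lambda_1\cap\Lambda_2$. You additionally work out the details the paper only asserts, namely testing with $v=1$ and $v_h=1$ and the divergence-theorem argument with the no-flux boundary conditions.
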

\begin{proof}
The $L^2$ projection for initial data is conservative, namely $(u_h^0,1) = (u^0,1)$.
The DG scheme is conservative, namely $(u_h^{n+1},1) = (U_h^{n},1)$; the optimization problem uses conservation as a constraint and \eqref{eq:adjust_cell_avg} does not change the global mass, namely $(U_h^{n+1},1) = (u_h^{n+1},1)$.
Putting these together, induction gives $(u_h^n,1) = (U_h^n,1) = (u^0,1)$ for any $n\geq0$. 
\par
The PDEs are conservative, namely the solutions satisfy $(u^{\mathrm{exact}}(t),1) = (u^0,1)$. Therefore, the cell averages of the exact solutions belong to set $\Lambda_1$, namely we have $(\overline{u^\mathrm{exact}}, 1) = (u^0,1)$.
Since the maximum principle gives $u^\mathrm{exact}(\vec{x},t)\in[m,M]$ for all $\vec{x}\in\Omega$ and $t\in[0,T]$, the cell averages also satisfy $m\leq\overline{u^\mathrm{exact}}\leq M$, so $\overline{u^\mathrm{exact}}\in\Lambda$.
\end{proof}
\par
After each time step, the cell averages of $U_h^{n+1} = \mathcal{L}(\tau)\circ\mathcal{H}(\tau)U_h^n$ are guaranteed to lie in the admissible set. The high-order moments are subsequently modified by the classical Zhang--Shu scaling limiter, which preserves the local mass conservation \cite{zhang2010maximum}.
We emphasize again that the analysis of this paper focuses exclusively on the cell average limiter, i.e., on solving \eqref{eq:subproblem_HL} without Zhang--Shu scaling.
\begin{remark}
One may consider Strang splitting $U^{n+1} = \mathcal{L}(\frac{\tau}{2})\circ\mathcal{H}(\tau)\circ\mathcal{L}(\frac{\tau}{2}) U^n$, namely we solve $\mathcal{L}$ on $[t^n, t^n + \frac{\tau}{2}]$ with initial $U^n$ to obtain $u^{n+1/2}$, then solve $\mathcal{H}$ on $[t^n, t^{n+1}]$ with initial $u^{n+1/2}$ to obtain $u^{n+1}$, finally solve $\mathcal{L}$ on $[t^n + \frac{\tau}{2}, t^{n+1}]$ with initial $u^{n+1}$ to obtain $U^{n+1}$.
For any $\tau > 0$, the proximal operator $\prox^\tau_{\iota_{\Lambda}}$ projects onto $\Lambda$. Thus, the $\mathcal{L}$ step is independent of time step size.
In particular, since $\overline{U_h^n}\in\Lambda$ at the start of each time step, the opening half-step $\mathcal{L}(\frac{\tau}{2})$ leaves $U_h^n$ unchanged, giving $\mathcal{L}(\frac{\tau}{2}) U_h^n = U_h^n$. Hence, the Strang splitting reduces to $\mathcal{L}(\frac{\tau}{2})\circ\mathcal{H}(\tau)$, which coincides with the Lie splitting because the $\mathcal{L}$ step does not depend on time step size.
\end{remark}

\section{Properties of the cell average limiter}\label{sec:cell_avg_limiter}
An optimization-based cell average limiter finds a piecewise constant polynomial that minimizes the $L^2$ distance to the cell average of the DG solution, subject to the conservation and bound-preserving constraints.
In this section, we analyze the properties of the cell average limiter at a single time step. For convenience, we suppress the time superscript throughout this section.

\paragraph{Projection onto admissible set.}
Recalling \eqref{eq:cell_avg_limiter_model0}, given a DG solution $u_h \in V_h$, the cell average limiter seeks $x_h \in \IP_0(\setE_h)$ by solving
\begin{align}\label{eq:opt_model2}
\min_{x_h\in \IP_0(\setE_h)} \norm{x_h - \overline{u_h}}{L^2}^2 + \iota_{\Lambda_1}(x_h) + \iota_{\Lambda_2}(x_h).
\end{align}
By definition of the indicator function, we have the following constrained minimization problem:
\begin{align}\label{eq:opt_model1}
\min_{x_h\in \IP_0(\setE_h)} \norm{x_h - \overline{u_h}}{L^2}^2 
\quad\text{subject to}\quad
\int_\Omega x_h = \int_\Omega u_h
\quad\text{and}\quad
m \leq x_h \leq M.
\end{align}
Since $\Lambda_1$ and $\Lambda_2$ are both convex and closed, the indicators $\iota_{\Lambda_1}$ and $\iota_{\Lambda_2}$ are convex functions. Thus, \eqref{eq:opt_model2} is a strongly convex minimization problem. The unique minimizer is
\begin{align}\label{eq:opt_model3}
x_h^\ast = \mathcal{Q}\overline{u_h} = \operatorname*{arg\,min}_{x_h \in \Lambda} \norm{x_h - \overline{u_h}}{L^2}^2.
\end{align}
Consistent with \eqref{eq:adjust_cell_avg}, the post-processed DG solution is defined by $u_h^\mathrm{lim} = u_h - \overline{u_h} + x_h^\ast$.
We begin with classical nonexpansiveness properties of metric projections onto closed convex sets in Hilbert spaces.
\begin{lemma}\label{lem:proj_nonexpansive}
Let $H$ denote a Hilbert space associated with norm $\norm{\cdot}{}$. Let $\Lambda\subset H$ be a closed convex set. For any given $u\in H$, define projection operator $\mathcal{Q}: H \rightarrow \Lambda$ as follows: 
\begin{align*}
\mathcal{Q}u = \operatorname*{arg\,min}_{z\in \Lambda} \norm{z - u}{}^2.
\end{align*}
Then, the operator $\mathcal{Q}$ is nonexpansive, namely for any $u,w\in H$, we have $\norm{\mathcal{Q}u - \mathcal{Q}w}{}\leq\norm{u-w}{}$.
\end{lemma}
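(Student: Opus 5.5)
The plan is the standard variational-inequality argument. Implicitly we also need $\Lambda$ nonempty so that $\mathcal{Q}$ is well defined. I will use the inner product $(\cdot,\cdot)$ that induces $\norm{\cdot}{}$.

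\textbf{Step 1: existence and uniqueness.} The map $z\mapsto\norm{z-u}{}^2$ is continuous and strongly convex, and $\Lambda$ is closed and convex. Take a minimizing sequence; the parallelogram law shows it is Cauchy. Completeness of $H$ and closedness of $\Lambda$ then give a minimizer. Strong convexity gives uniqueness. In the paper's application $H=\IP_0(\setE_h)$ is finite dimensional, so this step is routine.

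\textbf{Step 2: variational characterization.} I claim that $p=\mathcal{Q}u$ if and only if $p\in\Lambda$ and
\begin{align*}
(u-p,\, z-p)\leq 0 \quad \forall z\in\Lambda.
\end{align*}
To prove it, fix $z\in\Lambda$ and $t\in(0,1]$. By convexity $p+t(z-p)\in\Lambda$, so minimality gives
\begin{align*}
\norm{u-p}{}^2 \leq \norm{u-p-t(z-p)}{}^2 .
\end{align*}
Expanding the right-hand side, dividing by $t$, and letting $t\to0^+$ yields the inequality. In the language of \eqref{eq:def_subgradient}, this says $u-p\in\partial\iota_{\Lambda}(p)$, i.e., $p=(I+\partial\iota_\Lambda)^{-1}u$.

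\textbf{Step 3: firm nonexpansiveness.} Let $p=\mathcal{Q}u$ and $q=\mathcal{Q}w$. Apply Step 2 at $u$ with the test point $z=q$, and at $w$ with the test point $z=p$:
\begin{align*}
(u-p,\, q-p)\leq 0, \qquad (w-q,\, p-q)\leq 0 .
\end{align*}
Adding the two gives
\begin{align*}
\norm{p-q}{}^2\leq (u-w,\, p-q).
\end{align*}
The Cauchy--Schwarz inequality then gives $\norm{p-q}{}\leq\norm{u-w}{}$ when $p\neq q$; the case $p=q$ is trivial.

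The only step needing any care is Step 2. There we must pass to the limit correctly and use convexity of $\Lambda$ to keep the perturbed point feasible. Everything else is algebra.
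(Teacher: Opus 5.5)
Your proof is correct and takes the same route as the paper: you derive $(u-\mathcal{Q}u,\, z-\mathcal{Q}u)\le 0$ for all $z\in\Lambda$ by moving along the segment from $\mathcal{Q}u$ to $z$, then add the instances at $u$ and $w$ and apply Cauchy--Schwarz; the paper reads this inequality off from the vertex of the quadratic $\phi(\eta)$ rather than dividing by $t$ and letting $t\to0^+$. Your Step 1 and the nonemptiness caveat make explicit the well-posedness of $\mathcal{Q}$ that the paper takes for granted, and you only prove the ``only if'' half of your Step 2 characterization, which is the only half the argument uses.
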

\begin{proof}
Since $\mathcal{Q}u\in \Lambda$ minimizes $\norm{z-u}{}^2$ over the convex closed set $\Lambda$, $\forall z \in \Lambda$ and $\forall \eta \in [0,1]$, the convex combination $(1-\eta)\mathcal{Q}u + \eta z \in \Lambda$. Define
\begin{align*}
\phi(\eta) = \norm{u - ((1-\eta)\mathcal{Q}u + \eta z)}{}^2
= \eta^2\norm{z - \mathcal{Q}u}{}^2 - 2\eta(u - \mathcal{Q}u, z - \mathcal{Q}u) + \norm{u - \mathcal{Q}u}{}^2.
\end{align*}
If $z = \mathcal{Q}u$, then the inner product $(u - \mathcal{Q}u, z - \mathcal{Q}u) \leq 0$ holds. 
Otherwise, $\phi$ is a convex quadratic function with respect to $\eta$ and reaches its minimum over $[0,1]$ at $\eta=0$. Thus, its axis of symmetry lies at $\eta^\ast\leq0$, which yields
\begin{align}\label{eq:proj_nonexpansive_1}
\eta^\ast = \frac{(u - \mathcal{Q}u, z - \mathcal{Q}u)}{\norm{z - \mathcal{Q}u}{}^2} \leq 0
\quad\Rightarrow\quad
(u - \mathcal{Q}u, z - \mathcal{Q}u) \leq 0, \quad \forall z \in \Lambda.
\end{align}
For any $u,w\in H$, noting that $\mathcal{Q}u\in \Lambda$ and $\mathcal{Q}w\in \Lambda$, applying the inequality above, we have
\begin{align*}
(u - \mathcal{Q}u, \mathcal{Q}w - \mathcal{Q}u) \leq 0 
\quad\text{and}\quad 
(w - \mathcal{Q}w, \mathcal{Q}u - \mathcal{Q}w) \leq 0.
\end{align*}
If $\mathcal{Q}u = \mathcal{Q}w$, the conclusion is trivial. Otherwise, adding the above two inequalities and rearranging, by Cauchy--Schwarz's inequality, we have
\begin{align*}
\norm{\mathcal{Q}u - \mathcal{Q}w}{}^2 
\leq (u - w, \mathcal{Q}u - \mathcal{Q}w) 
\leq \norm{u - w}{}\norm{\mathcal{Q}u - \mathcal{Q}w}{}.
\end{align*}
Therefore, we conclude the proof.
\end{proof}
\begin{lemma}\label{lem:proj_firm}
Let space $H$, set $\Lambda$, and projection operator $\mathcal{Q}$ be as in Lemma~\ref{lem:proj_nonexpansive}. For any $u\in H$ and any $z\in \Lambda$, we have $\norm{u - \mathcal{Q}u}{}^2 + \norm{\mathcal{Q}u - z}{}^2 \leq \norm{u - z}{}^2$.
\end{lemma}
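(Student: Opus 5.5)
The plan is to reuse the variational inequality \eqref{eq:proj_nonexpansive_1} established in the proof of Lemma~\ref{lem:proj_nonexpansive}. That inequality states that $(u - \mathcal{Q}u, z - \mathcal{Q}u) \leq 0$ for every $z\in\Lambda$. It was derived from the first-order optimality of $\mathcal{Q}u$ along the segment joining $\mathcal{Q}u$ and $z$, and it holds for any $u\in H$ and any $z\in\Lambda$, which is exactly the setting of the present statement.

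The key step is an elementary expansion. I will write $u - z = (u - \mathcal{Q}u) + (\mathcal{Q}u - z)$ and expand the squared norm in the Hilbert space $H$:
\begin{align*}
\norm{u - z}{}^2 = \norm{u - \mathcal{Q}u}{}^2 + \norm{\mathcal{Q}u - z}{}^2 + 2(u - \mathcal{Q}u, \mathcal{Q}u - z).
\end{align*}
The cross term equals $-2(u - \mathcal{Q}u, z - \mathcal{Q}u)$. By \eqref{eq:proj_nonexpansive_1} it is therefore nonnegative. Dropping it gives $\norm{u - \mathcal{Q}u}{}^2 + \norm{\mathcal{Q}u - z}{}^2 \leq \norm{u - z}{}^2$, which is the claim.

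There is essentially no obstacle here. The only point to check is that \eqref{eq:proj_nonexpansive_1} is available for arbitrary $z\in\Lambda$, including the degenerate case $z = \mathcal{Q}u$. In that case the inequality holds trivially, since the inner product is zero, and the proof of Lemma~\ref{lem:proj_nonexpansive} already treats this case. Existence and uniqueness of $\mathcal{Q}u$ for a closed convex set in a Hilbert space is assumed as part of the definition of $\mathcal{Q}$ in Lemma~\ref{lem:proj_nonexpansive}.
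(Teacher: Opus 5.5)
Your proposal is correct and matches the paper's proof: both invoke the variational inequality \eqref{eq:proj_nonexpansive_1} from Lemma~\ref{lem:proj_nonexpansive}, expand $\norm{u-z}{}^2$ via $u - z = (u - \mathcal{Q}u) + (\mathcal{Q}u - z)$, and drop the nonnegative cross term. Your separate check of the case $z = \mathcal{Q}u$ is consistent with how that lemma's proof handles it.
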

\begin{proof}
Recall the proof of Lemma~\ref{lem:proj_nonexpansive}. The \eqref{eq:proj_nonexpansive_1} gives $(u - \mathcal{Q}u, z - \mathcal{Q}u)\leq0$ for all $z\in\Lambda$. 
Thus, from $u - z = (u - \mathcal{Q}u) + (\mathcal{Q}u - z)$, we obtain
\begin{align*}
\norm{u - z}{}^2
= \norm{u - \mathcal{Q}u}{}^2 + 2(u - \mathcal{Q}u, \mathcal{Q}u - z) + \norm{\mathcal{Q}u - z}{}^2
\geq \norm{u - \mathcal{Q}u}{}^2 + \norm{\mathcal{Q}u - z}{}^2.
\end{align*}
Therefore, we conclude the proof.
\end{proof}

\paragraph{Accuracy preservation and stability.}
Applying Lemma~\ref{lem:proj_nonexpansive}, we establish the optimality of the cell average limiter \eqref{eq:opt_model1} as follows.
\begin{theorem}\label{thm:accuracy_L2}
The modified cell averages $x_h^\ast$ are not worse than the original DG cell averages $\overline{u_h}$ as an approximation to the cell average of the exact solution $\overline{u^\mathrm{exact}}$, measured in the $L^2$ norm, namely the inequality $\norm{x_h^\ast - \overline{u^\mathrm{exact}}}{L^2} \leq \norm{\overline{u_h} - \overline{u^\mathrm{exact}}}{L^2}$ holds. 
\end{theorem}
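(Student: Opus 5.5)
The plan is to apply Lemma~\ref{lem:proj_nonexpansive}, or more directly Lemma~\ref{lem:proj_firm}, in the Hilbert space $H = \IP_0(\setE_h)$ equipped with the $L^2(\Omega)$ inner product, taking $\Lambda = \Lambda_1\cap\Lambda_2$ and letting $\mathcal{Q}$ be the projection in \eqref{eq:opt_model3}.

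\textbf{Step 1: $H$ and $\Lambda$ meet the hypotheses.} The space $\IP_0(\setE_h)$ is finite dimensional, so with the $L^2$ inner product it is a Hilbert space. The set $\Lambda_1$ is an affine hyperplane, because $x_h\mapsto (x_h,1)$ is a continuous linear functional. The set $\Lambda_2$ is a box, namely a product of closed intervals in the elementwise values. Both are closed and convex, so their intersection $\Lambda$ is closed and convex. It is also nonempty: this follows from Theorem~\ref{thm:mass_conservation}, which shows that $\overline{u^\mathrm{exact}}\in\Lambda$.

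\textbf{Step 2: $\overline{u^\mathrm{exact}}$ is a fixed point of $\mathcal{Q}$.} By Theorem~\ref{thm:mass_conservation}, $\overline{u^\mathrm{exact}}$ (at the relevant time) lies in $\Lambda$. The minimizer of $\norm{z-\overline{u^\mathrm{exact}}}{L^2}^2$ over $\Lambda$ is therefore $\overline{u^\mathrm{exact}}$ itself, which gives $\mathcal{Q}\,\overline{u^\mathrm{exact}} = \overline{u^\mathrm{exact}}$.

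\textbf{Step 3: conclude.} With $x_h^\ast = \mathcal{Q}\overline{u_h}$, Lemma~\ref{lem:proj_nonexpansive} applied to $u=\overline{u_h}$ and $w=\overline{u^\mathrm{exact}}$ gives
\[
\norm{x_h^\ast - \overline{u^\mathrm{exact}}}{L^2} = \norm{\mathcal{Q}\overline{u_h} - \mathcal{Q}\overline{u^\mathrm{exact}}}{L^2} \leq \norm{\overline{u_h}-\overline{u^\mathrm{exact}}}{L^2}.
\]
Alternatively, Lemma~\ref{lem:proj_firm} with $z=\overline{u^\mathrm{exact}}\in\Lambda$ yields the stronger bound $\norm{\overline{u_h}-x_h^\ast}{L^2}^2+\norm{x_h^\ast-\overline{u^\mathrm{exact}}}{L^2}^2\le\norm{\overline{u_h}-\overline{u^\mathrm{exact}}}{L^2}^2$, and the claim follows by dropping the first term.

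\textbf{Main obstacle.} There is essentially no analytic difficulty here. The only point needing care is the membership $\overline{u^\mathrm{exact}}\in\Lambda$. This requires the mass constraint in $\Lambda_1$, namely $(u^0,1)$, to match $\int_\Omega u_h$. That match holds by Theorem~\ref{thm:mass_conservation}, since the scheme conserves mass and so $(u_h,1)=(u^0,1)$. It also requires the bounds $m,M$ on the exact solution, which the maximum principle provides. Once these are verified, the theorem is a direct corollary of nonexpansiveness.
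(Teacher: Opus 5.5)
Your proposal is correct and follows the paper's proof essentially verbatim. Theorem~\ref{thm:mass_conservation} gives $\overline{u^\mathrm{exact}}\in\Lambda$, so $\overline{u^\mathrm{exact}}$ is a fixed point of $\mathcal{Q}$, and Lemma~\ref{lem:proj_nonexpansive} with $u=\overline{u_h}$, $w=\overline{u^\mathrm{exact}}$ then yields the bound (your alternative via Lemma~\ref{lem:proj_firm} is also valid and gives a slightly sharper inequality, though the paper does not need it here).
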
 
\begin{proof}
The optimization model \eqref{eq:opt_model1} defines the projection operator $\mathcal{Q}$ onto the closed convex set $\Lambda = \Lambda_1 \cap \Lambda_2 \subset \IP_0(\setE_h)$. By definition, $x_h^\ast = \mathcal{Q}\overline{u_h}$ is the projection of $\overline{u_h}$ onto $\Lambda$. From Theorem~\ref{thm:mass_conservation}, we know $\overline{u^\mathrm{exact}}\in\Lambda$. We have $\mathcal{Q}\overline{u^\mathrm{exact}} = \overline{u^\mathrm{exact}}$.
Applying Lemma~\ref{lem:proj_nonexpansive}, we get 
\begin{align*}
\norm{x_h^\ast - \overline{u^\mathrm{exact}}}{L^2} 
= \norm{\mathcal{Q}\overline{u_h} - \mathcal{Q}\overline{u^\mathrm{exact}}}{L^2}
\leq\norm{\overline{u_h} - \overline{u^\mathrm{exact}}}{L^2}.
\end{align*}
Therefore, we conclude the proof.
\end{proof}
For any $u\in L^2(\Omega)$, we can decompose its $L^2$ norm into the sum of the $L^2$ norm of cell averages and high-order moments (Pythagorean identity), namely by $(\overline{u},u-\overline{u}) = 0$, we have
\begin{align}\label{eq:cell_avg_decomposition}
\norm{u}{L^2}^2 
= \norm{\overline{u}}{L^2}^2 + 2(\overline{u},u-\overline{u}) + \norm{u-\overline{u}}{L^2}^2 
= \norm{\overline{u}}{L^2}^2 + \norm{u-\overline{u}}{L^2}^2.
\end{align}
\begin{theorem}\label{thm:limiter_accuracy}
Suppose the exact solution $u^\mathrm{exact}\in [m,M]$. The cell average limiter \eqref{eq:opt_model1} does not worsen the $L^2$ approximation error to the exact solution. We have
\begin{align*}
\norm{u_h^\mathrm{lim} - u^\mathrm{exact}}{L^2} \leq \norm{u_h - u^\mathrm{exact}}{L^2}.
\end{align*}
\end{theorem}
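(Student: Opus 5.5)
We will combine the Pythagorean decomposition \eqref{eq:cell_avg_decomposition} with the cell-average estimate of Theorem~\ref{thm:accuracy_L2}. The idea is that the limiter changes only the cell averages and leaves the high-order moments untouched.

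First, we apply \eqref{eq:cell_avg_decomposition} to the error $u_h^\mathrm{lim} - u^\mathrm{exact}$. The cell-average operator is linear. Since $x_h^\ast\in\IP_0(\setE_h)$, it satisfies $\overline{x_h^\ast}=x_h^\ast$, and for any $v$ we have $\overline{\overline{v}}=\overline{v}$. Hence
\begin{align*}
\overline{u_h^\mathrm{lim}} = \overline{u_h} - \overline{u_h} + x_h^\ast = x_h^\ast,
\qquad
u_h^\mathrm{lim} - \overline{u_h^\mathrm{lim}} = u_h - \overline{u_h}.
\end{align*}
Therefore the cell average of the error is $x_h^\ast - \overline{u^\mathrm{exact}}$, and its high-order part is $(u_h-\overline{u_h}) - (u^\mathrm{exact}-\overline{u^\mathrm{exact}})$. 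This high-order part is exactly the high-order part of the unlimited error $u_h - u^\mathrm{exact}$. The decomposition then gives
\begin{align*}
\norm{u_h^\mathrm{lim} - u^\mathrm{exact}}{L^2}^2 = \norm{x_h^\ast - \overline{u^\mathrm{exact}}}{L^2}^2 + \norm{(u_h - u^\mathrm{exact}) - \overline{(u_h - u^\mathrm{exact})}}{L^2}^2 .
\end{align*}
Applying \eqref{eq:cell_avg_decomposition} to $u_h - u^\mathrm{exact}$ in the same way gives
\begin{align*}
\norm{u_h - u^\mathrm{exact}}{L^2}^2 = \norm{\overline{u_h} - \overline{u^\mathrm{exact}}}{L^2}^2 + \norm{(u_h - u^\mathrm{exact}) - \overline{(u_h - u^\mathrm{exact})}}{L^2}^2 .
\end{align*}

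Second, we compare the two identities. The second terms coincide, so it suffices to show $\norm{x_h^\ast - \overline{u^\mathrm{exact}}}{L^2}\le\norm{\overline{u_h} - \overline{u^\mathrm{exact}}}{L^2}$, which is precisely Theorem~\ref{thm:accuracy_L2}. That theorem requires $\overline{u^\mathrm{exact}}\in\Lambda$, and this is where we use the hypotheses. The assumption $u^\mathrm{exact}\in[m,M]$ gives the bounds on the cell averages, so $\overline{u^\mathrm{exact}}\in\Lambda_2$. Global conservation from Theorem~\ref{thm:mass_conservation}, namely $(u^\mathrm{exact},1)=(u^0,1)=(u_h,1)$, gives $\overline{u^\mathrm{exact}}\in\Lambda_1$.

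There is no genuine obstacle in this argument. The only point that needs care is checking that the cell-average operator commutes with the reconstruction \eqref{eq:adjust_cell_avg}, so that the high-order moments of the error are truly unchanged by the limiter. A second, minor point is that membership $\overline{u^\mathrm{exact}}\in\Lambda_1$ uses the same total mass that appears in the constraint, which Theorem~\ref{thm:mass_conservation} supplies.
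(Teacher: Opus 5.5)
Your proposal is correct and follows the paper's proof essentially step for step. You show that the limiter replaces the cell averages by $x_h^\ast$ while leaving $u_h-\overline{u_h}$ unchanged, split both errors with the Pythagorean identity \eqref{eq:cell_avg_decomposition}, and compare the cell-average parts via Theorem~\ref{thm:accuracy_L2}, whose hypothesis $\overline{u^\mathrm{exact}}\in\Lambda$ you verify explicitly as the paper does through Theorem~\ref{thm:mass_conservation}.
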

\begin{proof}
Recall $u_h^\mathrm{lim} = u_h - \overline{u_h} + x_h^\ast$. As $x_h^\ast$ is piecewise constant, on each cell $E_i\in\setE_h$, we have
\begin{align*}
\on{\overline{u_h^\mathrm{lim}}}{E_i} 
= \frac{1}{\abs{E_i}} \int_{E_i} (u_h - \overline{u_h} + x_h^\ast)
= \frac{1}{\abs{E_i}} \int_{E_i} u_h - \on{\overline{u_h}}{E_i}\frac{1}{\abs{E_i}}\int_{E_i} 1 + \on{x_h^\ast}{E_i}\frac{1}{\abs{E_i}}\int_{E_i} 1 
= \on{x_h^\ast}{E_i}.
\end{align*} 
Thus, on each cell $E_i\in\setE_h$, by definition of $u_h^\mathrm{lim}$, we obtain
\begin{align*}
\on{(u_h^\mathrm{lim}-\overline{u_h^\mathrm{lim}})}{E_i}
= \on{(u_h^\mathrm{lim} - x_h^\ast)}{E_i}
= \on{(u_h - \overline{u_h})}{E_i}.
\end{align*}
Applying decomposition \eqref{eq:cell_avg_decomposition} to $u_h^\mathrm{lim} - u^\mathrm{exact}$, because of the linearity of the cell average, we get
\begin{align*}
\norm{u_h^\mathrm{lim} - u^\mathrm{exact}}{L^2}^2
&= \norm{\overline{u_h^\mathrm{lim} - u^\mathrm{exact}}}{L^2}^2
+ \norm{(u_h^\mathrm{lim} - u^\mathrm{exact}) - \overline{u_h^\mathrm{lim} - u^\mathrm{exact}}}{L^2}^2\\
&= \norm{\overline{u_h^\mathrm{lim}} - \overline{u^\mathrm{exact}}}{L^2}^2
+ \norm{(u_h^\mathrm{lim} - \overline{u_h^\mathrm{lim}}) - (u^\mathrm{exact} - \overline{u^\mathrm{exact}})}{L^2}^2\\
&= \norm{x_h^\ast - \overline{u^\mathrm{exact}}}{L^2}^2
+ \norm{(u_h - \overline{u_h}) - (u^\mathrm{exact} - \overline{u^\mathrm{exact}})}{L^2}^2.
\end{align*}
By Theorem~\ref{thm:accuracy_L2}, we have $\norm{x_h^\ast - \overline{u^\mathrm{exact}}}{L^2} \leq \norm{\overline{u_h} - \overline{u^\mathrm{exact}}}{L^2}$.
Applying decomposition \eqref{eq:cell_avg_decomposition} again to $u_h - u^\mathrm{exact}$, we obtain
\begin{align*}
\norm{u_h^\mathrm{lim} - u^\mathrm{exact}}{L^2}^2
&\leq \norm{\overline{u_h} - \overline{u^\mathrm{exact}}}{L^2}^2
+ \norm{(u_h - \overline{u_h}) - (u^\mathrm{exact} - \overline{u^\mathrm{exact}})}{L^2}^2\\
&= \norm{\overline{u_h - u^\mathrm{exact}}}{L^2}^2
+ \norm{(u_h - u^\mathrm{exact}) - \overline{u_h - u^\mathrm{exact}}}{L^2}^2
= \norm{u_h - u^\mathrm{exact}}{L^2}^2.
\end{align*}
Therefore, we conclude the proof.
\end{proof}
\begin{remark}
For a fixed time step, Theorem~\ref{thm:limiter_accuracy} implies that the cell average limiter preserves the high-order spatial accuracy of the DG solution. If $u_h$ satisfies $\norm{u_h - u^\mathrm{exact}}{L^2} = \mathcal{O}(h^{k+1})$, then $\norm{u_h^\mathrm{lim} - u^\mathrm{exact}}{L^2} = \mathcal{O}(h^{k+1})$ as well.
\end{remark}
For stability of the limiter, we have the following result that the cell average limiter does not increase the $L^2$ norm.
\begin{theorem}
Assume the exact solution satisfies $u^\mathrm{exact}\in[m, M]$ on $\Omega$ and the numerical algorithm is conservative. The $L^2$ norm of the limited solution and its cell averages satisfy
\begin{align}
\norm{x_h^\ast}{L^2} &\leq \norm{\overline{u_h}}{L^2}, \label{eq:stability_cell_avg} \\
\norm{u_h^\mathrm{lim}}{L^2} &\leq \norm{u_h}{L^2}. \label{eq:stability_limiter}
\end{align}
\end{theorem}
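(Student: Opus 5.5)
The plan is to compare both $x_h^\ast$ and $\overline{u_h}$ to the \emph{constant} function
\begin{align*}
c = \frac{(u^0,1)}{\abs{\Omega}},
\end{align*}
regarded as an element of $\IP_0(\setE_h)$. The point is that $0$ is in general not in $\Lambda$, so Lemma~\ref{lem:proj_nonexpansive} cannot be applied with $w=0$. Instead we use a feasible point that is orthogonal, in the right sense, to all mass-preserving perturbations.

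\textbf{Step 1: $c\in\Lambda$.} By Theorem~\ref{thm:mass_conservation}, $(u^\mathrm{exact}(t),1)=(u^0,1)$, so $c$ is the mean value of $u^\mathrm{exact}(t)$ over $\Omega$. Since $m\le u^\mathrm{exact}\le M$, its mean also satisfies $m\le c\le M$, hence $c\in\Lambda_2$. Clearly $(c,1)=(u^0,1)$, so $c\in\Lambda_1$. This is the step that uses the hypothesis $u^\mathrm{exact}\in[m,M]$, and it is the only nontrivial choice in the argument: one has to find a feasible point against which norms decompose orthogonally.

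\textbf{Step 2: orthogonal splitting about $c$.} Any $v\in\IP_0(\setE_h)$ with $(v,1)=(u^0,1)$ satisfies $(v-c,c)=c\,\big((v,1)-(c,1)\big)=0$, and therefore
\begin{align*}
\norm{v}{L^2}^2=\norm{v-c}{L^2}^2+\norm{c}{L^2}^2 .
\end{align*}
This applies to $v=x_h^\ast\in\Lambda_1$. It also applies to $v=\overline{u_h}$, because the algorithm is conservative: $(\overline{u_h},1)=(u_h,1)=(u^0,1)$.

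\textbf{Step 3: proof of \eqref{eq:stability_cell_avg}.} Apply Lemma~\ref{lem:proj_firm} in $H=\IP_0(\setE_h)$ with $u=\overline{u_h}$ and $z=c\in\Lambda$. This gives
\begin{align*}
\norm{\overline{u_h}-x_h^\ast}{L^2}^2+\norm{x_h^\ast-c}{L^2}^2\le\norm{\overline{u_h}-c}{L^2}^2 .
\end{align*}
Dropping the first term and adding $\norm{c}{L^2}^2$ to both sides, Step 2 yields $\norm{x_h^\ast}{L^2}^2\le\norm{\overline{u_h}}{L^2}^2$. In fact we obtain the stronger bound $\norm{x_h^\ast}{L^2}^2+\norm{\overline{u_h}-x_h^\ast}{L^2}^2\le\norm{\overline{u_h}}{L^2}^2$.

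\textbf{Step 4: proof of \eqref{eq:stability_limiter}.} As shown in the proof of Theorem~\ref{thm:limiter_accuracy}, $\overline{u_h^\mathrm{lim}}=x_h^\ast$ and $u_h^\mathrm{lim}-\overline{u_h^\mathrm{lim}}=u_h-\overline{u_h}$. Applying the decomposition \eqref{eq:cell_avg_decomposition} to $u_h^\mathrm{lim}$ and then to $u_h$, and using Step 3, we get
\begin{align*}
\norm{u_h^\mathrm{lim}}{L^2}^2=\norm{x_h^\ast}{L^2}^2+\norm{u_h-\overline{u_h}}{L^2}^2\le\norm{\overline{u_h}}{L^2}^2+\norm{u_h-\overline{u_h}}{L^2}^2=\norm{u_h}{L^2}^2 .
\end{align*}
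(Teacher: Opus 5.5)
Your proof is correct and follows the paper's argument. The paper compares $x_h^\ast$ and $\overline{u_h}$ to the same constant $\xi_h = b/\abs{\Omega}$, which equals your $c$ by conservation, uses the same orthogonal splitting about it, and finishes with the decomposition \eqref{eq:cell_avg_decomposition}. The only difference is that you invoke Lemma~\ref{lem:proj_firm} rather than Lemma~\ref{lem:proj_nonexpansive} with $w=\xi_h$, which gives you the slightly sharper bound $\norm{x_h^\ast}{L^2}^2+\norm{\overline{u_h}-x_h^\ast}{L^2}^2\le\norm{\overline{u_h}}{L^2}^2$ at no extra cost.
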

\begin{proof}
Since the exact solution satisfies $u^\mathrm{exact}\in[m,M]$ and the algorithm is conservative, from the total mass
\begin{align*}
b = \int_\Omega u_h = \int_\Omega u^\mathrm{exact}\quad\text{and}\quad m \leq u^\mathrm{exact} \leq M,
\end{align*} 
we get $m\abs{\Omega} \leq b \leq M\abs{\Omega}$. Hence $m \leq b/\abs{\Omega} \leq M$.
Define a piecewise constant function $\xi_h$ by $\xi_h|_{E_i} = b/\abs{\Omega}$ for all $E_i \in \setE_h$. By construction, $\xi_h$ satisfies the conservation constraint $\int_\Omega \xi_h = b$ and the bound constraint $m \leq \xi_h \leq M$, so $\xi_h \in \Lambda$.
Notice that the algorithm is conservative, $\int_\Omega \overline{u_h} = \int_\Omega u_h = b = \int_\Omega \xi_h$, and $x_h^\ast = \mathcal{Q}(\overline{u_h}) \in \Lambda$ gives $\int_\Omega x_h^\ast = b$ as well.
Thus, both $\overline{u_h} - \xi_h$ and $x_h^\ast - \xi_h$ have vanishing integral over $\Omega$.
Since $\xi_h$ is a constant function on $\Omega$, it is $L^2$ orthogonal to $\overline{u_h} - \xi_h$ and $x_h^\ast - \xi_h$. 
We have
\begin{align}\label{eq:stability_1}
\norm{\overline{u_h}}{L^2}^2 = \norm{\xi_h}{L^2}^2 + \norm{\overline{u_h} - \xi_h}{L^2}^2
\quad\text{and}\quad
\norm{x_h^\ast}{L^2}^2 = \norm{\xi_h}{L^2}^2 + \norm{x_h^\ast - \xi_h}{L^2}^2.
\end{align}
From $\xi_h \in \Lambda$, we have $\mathcal{Q}(\xi_h) = \xi_h$. Taking $w = \xi_h$ in Lemma~\ref{lem:proj_nonexpansive},
\begin{align}\label{eq:stability_2}
\norm{x_h^\ast - \xi_h}{L^2} = \norm{\mathcal{Q}(\overline{u_h}) - \mathcal{Q}(\xi_h)}{L^2} \leq \norm{\overline{u_h} - \xi_h}{L^2}.
\end{align}
Combining \eqref{eq:stability_1} and \eqref{eq:stability_2}, we get \eqref{eq:stability_cell_avg}.
Recall the post-processed solution $u_h^\mathrm{lim} = u_h - \overline{u_h} + x_h^\ast$. Then, on each cell $E\in\setE_h$, we have
\begin{align*}
\on{\overline{u_h^\mathrm{lim}}}{E} 
= \frac{1}{\abs{E}} \int_E (u_h - \overline{u_h} + x_h^\ast)
= \frac{1}{\abs{E}} \int_E u_h - \on{\overline{u_h}}{E}\frac{1}{\abs{E}}\int_E 1 + \on{x_h^\ast}{E}\frac{1}{\abs{E}}\int_E 1 
= \on{x_h^\ast}{E}.
\end{align*} 
Thus, we obtain
\begin{align*}
\norm{\overline{u_h^\mathrm{lim}}}{L^2}^2
&= \sum_{E\in\setE_h} \int_E \abs{\overline{u_h^\mathrm{lim}}}^2 
= \sum_{E\in\setE_h} \int_E \abs{x_h^\ast}^2
= \norm{x_h^\ast}{L^2}^2, \\
\norm{u_h^\mathrm{lim}-\overline{u_h^\mathrm{lim}}}{L^2}^2
&= \sum_{E\in\setE_h} \int_E \abs{u_h^\mathrm{lim}-\overline{u_h^\mathrm{lim}}}^2
= \sum_{E\in\setE_h} \int_E \abs{u_h^\mathrm{lim}-x_h^\ast}^2
= \sum_{E\in\setE_h} \int_E \abs{u_h - \overline{u_h}}^2
= \norm{u_h - \overline{u_h}}{L^2}^2.
\end{align*}
Applying decomposition \eqref{eq:cell_avg_decomposition}, we get the following stability
\begin{align*}
\norm{u_h^\mathrm{lim}}{L^2}^2 
&= \norm{\overline{u_h^\mathrm{lim}}}{L^2}^2 + \norm{u_h^\mathrm{lim}-\overline{u_h^\mathrm{lim}}}{L^2}^2
= \norm{x_h^\ast}{L^2}^2 + \norm{u_h - \overline{u_h}}{L^2}^2 \\
&\leq \norm{\overline{u_h}}{L^2}^2 + \norm{u_h - \overline{u_h}}{L^2}^2
= \norm{u_h}{L^2}^2. 
\end{align*}
Therefore, we obtain \eqref{eq:stability_limiter}.
\end{proof}
The optimization-based cell average limiter never increases the oscillation in the means. For a piecewise constant polynomial $v_h \in \IP_0(\setE_h)$, to measure oscillation across cell interfaces, define the total variation
\begin{align}\label{eq:TV_def}
\mathrm{TV}(v_h) = \sum_{e\in\Gammah}\abs{e} \left|\jump{v_h}\right|.
\end{align}
\begin{theorem}\label{thm:tvdm}
The cell average limiter preserves the order of the cell averages, namely $\overline{u_h}|_{E_1} \geq \overline{u_h}|_{E_2}$ implies $x_h^\ast|_{E_1} \geq x_h^\ast|_{E_2}$ for all $E_1, E_2\in\setE_h$. Furthermore, it does not increase the total variation of cell averages. We have $\mathrm{TV}(x_h^\ast) \leq \mathrm{TV}(\overline{u_h})$.
\end{theorem}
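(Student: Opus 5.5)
The plan is to derive an explicit characterization of the minimizer $x_h^\ast$ from the KKT conditions of \eqref{eq:opt_model1}, and then read off both claims. Write $x_i$ and $u_i$ for the values of $x_h^\ast$ and $\overline{u_h}$ on $E_i$. The objective is $\sum_i \abs{E_i}(x_i-u_i)^2$, the constraints are affine, and the feasible set is nonempty: the constant $b/\abs{\Omega}$ lies in $\Lambda$, as shown in the stability theorem. Hence the KKT conditions are necessary and sufficient.

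Introduce a multiplier $\lambda\in\IR$ for the mass constraint and multipliers for the bounds. Stationarity gives, on each cell,
\begin{align*}
x_i = \min\{M,\max\{m,\, u_i + \lambda\}\},
\end{align*}
that is, $x_h^\ast = \mathrm{clip}_{[m,M]}(\overline{u_h}+\lambda)$ for a single scalar $\lambda$ common to all cells. The scaling by $\abs{E_i}$ cancels because both the objective weight and the constraint weight carry $\abs{E_i}$.

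To justify this directly, first check that for each fixed $\lambda$ the cellwise clipped value minimizes the Lagrangian $\sum_i \abs{E_i}\big[(x_i-u_i)^2 - 2\lambda x_i\big]$ over the box $[m,M]$. Next, the map $\lambda\mapsto\sum_i\abs{E_i}\,\mathrm{clip}(u_i+\lambda)$ is continuous and nondecreasing, with range $[m\abs{\Omega},M\abs{\Omega}]\ni b$, so some $\lambda$ attains mass $b$. Standard Lagrangian duality, or a direct comparison with any feasible $y$, then shows that this clipped vector is the minimizer, and uniqueness follows from strong convexity. Making this step clean is the main technical point, though it is routine.

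Order preservation is then immediate. The map $s\mapsto \mathrm{clip}_{[m,M]}(s+\lambda)$ is nondecreasing, so $u_1\ge u_2$ implies $x_1\ge x_2$.

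For the total variation bound, the same map is also $1$-Lipschitz. Hence on each interior face $e$ shared by $E_{i^-}$ and $E_{i^+}$,
\begin{align*}
\abs{\jump{x_h^\ast}}_e = \abs{\mathrm{clip}(u_{i^-}+\lambda)-\mathrm{clip}(u_{i^+}+\lambda)} \le \abs{u_{i^-}-u_{i^+}} = \abs{\jump{\overline{u_h}}}_e .
\end{align*}
Multiplying by $\abs{e}$ and summing over $e\in\Gammah$ gives $\mathrm{TV}(x_h^\ast)\le\mathrm{TV}(\overline{u_h})$ via \eqref{eq:TV_def}.

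Note that this argument is in fact stronger than the claimed total variation bound, since it holds face by face.
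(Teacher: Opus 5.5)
Your proof is correct, but it follows a different route from the paper.

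You solve the KKT system explicitly and obtain the closed form $x_h^\ast=\mathrm{clip}_{[m,M]}(\overline{u_h}+\lambda)$ with a single scalar shift $\lambda$. The sufficiency step is rigorous. The intermediate value argument for the mass map produces a feasible clipped vector $x^\lambda$. For every feasible $y$, the comparison $f(y)=L(y,\lambda)\geq L(x^\lambda,\lambda)=f(x^\lambda)$ then holds, where $f$ and $L$ are the objective and Lagrangian. Uniqueness identifies $x^\lambda$ with $x_h^\ast$.

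The paper introduces no multipliers. It tests the variational inequality \eqref{eq:proj_nonexpansive_1}, already available from Lemma~\ref{lem:proj_nonexpansive}, with a perturbation $z_h\in\Lambda$. This perturbation moves a small mass $\epsilon$ from a cell where $x_h^\ast>m$ to a cell where $x_h^\ast<M$, which yields the pairwise inequality \eqref{eq:pair_order}. Order preservation and the jump bound then follow from a short case analysis on which bounds are active.

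What each route buys:
\begin{itemize}
\item The paper's argument stays inside the metric-projection framework of Section~\ref{sec:cell_avg_limiter} and needs no existence step for a multiplier.
\item Yours pays for that step but gives a global description of the limiter as a uniform shift followed by a cutoff. Monotonicity and the $1$-Lipschitz estimate are then immediate, with no case distinctions.
\item Your formula is exactly the structure behind the one-dimensional breakpoint search mentioned in Section~\ref{sec:numerical_experiments}.
\end{itemize}

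Your closing remark overstates the gain. The paper's proof is also face by face. Moreover, \eqref{eq:pair_order} together with order preservation already gives $\abs{x_h^\ast|_{E_1}-x_h^\ast|_{E_2}}\leq\abs{\overline{u_h}|_{E_1}-\overline{u_h}|_{E_2}}$ for any two cells, adjacent or not. Both proofs therefore deliver the same strengthened statement.
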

\begin{proof}
Applying inequality \eqref{eq:proj_nonexpansive_1} with $x_h^\ast = \mathcal{Q}\overline{u_h}$, we get $(\overline{u_h} - x_h^\ast,~ z_h - x_h^\ast) \leq 0$ for any $z_h\in\Lambda$.
Let $E_1$ and $E_2$ be distinct cells with $x_h^\ast|_{E_1} < M$ and $x_h^\ast|_{E_2} > m$. For $\epsilon > 0$, define a piecewise constant polynomial
\begin{align*}
z_h(\vec{x}) = \begin{cases}
x_h^\ast(\vec{x}) + \frac{1}{\abs{E_1}}\epsilon, & \text{if}~ \vec{x}\in E_1, \\
x_h^\ast(\vec{x}) - \frac{1}{\abs{E_2}}\epsilon, & \text{if}~ \vec{x}\in E_2, \\
x_h^\ast(\vec{x}), & \text{if}~ \vec{x}\notin E_1\cup E_2.
\end{cases}
\end{align*}
Taking $\epsilon = \frac{1}{2}\min((M - x_h^\ast|_{E_1})\abs{E_1}, (x_h^\ast|_{E_2} - m)\abs{E_2}) > 0$, we have $m \leq z_h \leq M$ and $\int_\Omega z_h = \int_\Omega x_h^\ast$, namely $z_h \in \Lambda$. Therefore, we have
\begin{align*}
0 \geq (\overline{u_h} - x_h^\ast,~ z_h - x_h^\ast) = \epsilon(\overline{u_h} - x_h^\ast)|_{E_1} - \epsilon(\overline{u_h} - x_h^\ast)|_{E_2}
\quad\Rightarrow\quad
(\overline{u_h} - x_h^\ast)|_{E_1} \leq (\overline{u_h} - x_h^\ast)|_{E_2},
\end{align*}
which rearranges to
\begin{align}\label{eq:pair_order}
x_h^\ast|_{E_1} - x_h^\ast|_{E_2} \geq \overline{u_h}|_{E_1} - \overline{u_h}|_{E_2},
\quad \forall E_1 \neq E_2 ~\text{with}~ x_h^\ast|_{E_1} < M ~\text{and}~ x_h^\ast|_{E_2} > m.
\end{align}
Then $\overline{u_h}|_{E_1} \geq \overline{u_h}|_{E_2}$ implies $x_h^\ast|_{E_1} \geq x_h^\ast|_{E_2}$.
In the remaining cases $x_h^\ast|_{E_1} = M$ or $x_h^\ast|_{E_2} = m$, since $m \leq x_h^\ast \leq M$ on $\Omega$, the inequality $x_h^\ast|_{E_1} \geq x_h^\ast|_{E_2}$ holds trivially.
In all cases, the cell average limiter preserves the order of the cell averages.
\par
Let $e = E_1\cap E_2 \in\Gammah$, where $E_1$ and $E_2$ are labeled so that $\overline{u_h}|_{E_1} \leq \overline{u_h}|_{E_2}$. This labeling gives $\abs{\llbracket\overline{u_h}\rrbracket} = \overline{u_h}|_{E_2} - \overline{u_h}|_{E_1}$.
Since the order of the cell averages is preserved, we have $x_h^\ast|_{E_1} \leq x_h^\ast|_{E_2}$, so that $\abs{\llbracket{x_h^\ast}\rrbracket} = x_h^\ast|_{E_2} - x_h^\ast|_{E_1}$.
If $x_h^\ast|_{E_1} = x_h^\ast|_{E_2}$, then $\abs{\llbracket{x_h^\ast}\rrbracket} = 0 \leq \abs{\llbracket{\overline{u_h}}\rrbracket}$.
If $x_h^\ast|_{E_1} < x_h^\ast|_{E_2}$, then $m \leq x_h^\ast|_{E_1} < x_h^\ast|_{E_2} \leq M$, so $x_h^\ast|_{E_1} < M$ and $x_h^\ast|_{E_2} > m$. Using \eqref{eq:pair_order}, we have
\begin{align*}
\abs{\llbracket{x_h^\ast}\rrbracket} = x_h^\ast|_{E_2} - x_h^\ast|_{E_1} \leq \overline{u_h}|_{E_2} - \overline{u_h}|_{E_1} = \abs{\llbracket{\overline{u_h}}\rrbracket}.
\end{align*}
Multiplying by $\abs{e}$, summing over $e\in\Gammah$ and using \eqref{eq:TV_def}, we get $\mathrm{TV}(x_h^\ast) \leq \mathrm{TV}(\overline{u_h})$.
\end{proof}
\begin{remark}
In one dimension an interior face is a single node, where $\abs{e} = 1$ by convention. The definition \eqref{eq:TV_def} then reduces to the classical total variation of the sequence of cell averages.
Theorem~\ref{thm:tvdm} holds in any dimension, and it concerns the postprocessing step alone. One application of the cell average limiter does not increase \eqref{eq:TV_def}. This does not make the fully discrete scheme total variation diminishing, since the DG step itself need not be.
\end{remark}

\paragraph{Nonexpansiveness.}
Let $\mathcal{P}_h$ denote the composition of the optimization procedure $\mathcal{Q}$ defined in \eqref{eq:opt_model3} with the modification of cell averages \eqref{eq:adjust_cell_avg}, namely $\mathcal{P}_h v = v - \overline{v} + \mathcal{Q}\overline{v}$ for all $v \in L^2(\Omega)$.
In particular, the post-processed DG solution is $u_h^\mathrm{lim} = \mathcal{P}_h u_h$.
\begin{theorem}\label{thm:P_nonexpansive}
The cell average limiter $\mathcal{P}_h$ is nonexpansive in the $L^2$ norm: for any $v, w \in L^2(\Omega)$,
\begin{align*}
\norm{\mathcal{P}_h v - \mathcal{P}_h w}{L^2} \leq \norm{v - w}{L^2}.
\end{align*}
\end{theorem}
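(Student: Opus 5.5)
The plan is to reduce the statement to Lemma~\ref{lem:proj_nonexpansive} through the Pythagorean decomposition \eqref{eq:cell_avg_decomposition}, following the same pattern as the proof of Theorem~\ref{thm:limiter_accuracy}. The only change is that the fixed point $\overline{u^\mathrm{exact}}$ is replaced by a second arbitrary input $w$.

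First, I will record the cell average of $\mathcal{P}_h v$. Since $\mathcal{Q}\overline{v}$ is piecewise constant, the cell average operator is linear, and it is idempotent ($\overline{\overline{v}}=\overline{v}$), the computation in the proof of Theorem~\ref{thm:limiter_accuracy} gives $\overline{\mathcal{P}_h v} = \mathcal{Q}\overline{v}$. Consequently $\mathcal{P}_h v - \overline{\mathcal{P}_h v} = v - \overline{v}$ on every cell. The same identities hold for $w$.

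Second, I will apply \eqref{eq:cell_avg_decomposition} to the difference $\mathcal{P}_h v - \mathcal{P}_h w$ and use linearity of the cell average:
\begin{align*}
\norm{\mathcal{P}_h v - \mathcal{P}_h w}{L^2}^2
= \norm{\mathcal{Q}\overline{v} - \mathcal{Q}\overline{w}}{L^2}^2 + \norm{(v-\overline{v}) - (w - \overline{w})}{L^2}^2 .
\end{align*}
The first term is bounded by $\norm{\overline{v}-\overline{w}}{L^2}^2$ via Lemma~\ref{lem:proj_nonexpansive}, applied in the Hilbert space $H=\IP_0(\setE_h)$ with the $L^2$ inner product and the closed convex set $\Lambda$. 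The second term equals $\norm{(v-w)-\overline{v-w}}{L^2}^2$. Applying \eqref{eq:cell_avg_decomposition} in reverse to $v-w$ then recombines the two bounds into $\norm{v-w}{L^2}^2$.

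The main point needing care is the definition of $\mathcal{Q}$ for general $v\in L^2(\Omega)$. The set $\Lambda$ carries the mass constraint $(x_h,1)=(u^0,1)$, which is fixed independently of $v$, so $\Lambda$ is the same closed convex set for every input. Nonemptiness requires $m\abs{\Omega}\le (u^0,1)\le M\abs{\Omega}$, and this holds because $u^0\in[m,M]$. Hence $\mathcal{Q}$ is a single-valued metric projection on all of $\IP_0(\setE_h)$, and Lemma~\ref{lem:proj_nonexpansive} applies with no assumption that $v$ and $w$ have equal mass. With this settled, the rest is the bookkeeping above.
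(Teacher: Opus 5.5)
Your proposal is correct and follows essentially the paper's argument: the paper likewise splits $\mathcal{P}_h v - \mathcal{P}_h w$ into the zero-cell-average part $(v-w)-\overline{(v-w)}$ and the piecewise constant part $\mathcal{Q}\overline{v}-\mathcal{Q}\overline{w}$, uses their $L^2$ orthogonality, applies Lemma~\ref{lem:proj_nonexpansive} to the latter, and recombines via \eqref{eq:cell_avg_decomposition}. Your added remark is a worthwhile clarification that the paper leaves implicit: $\Lambda$ is a fixed nonempty closed convex subset of $\IP_0(\setE_h)$, independent of the input, so $\mathcal{Q}$ is well defined for arbitrary $v,w$ without any equal-mass assumption.
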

\begin{proof}
By definition of $\mathcal{P}_h$, we have
$\mathcal{P}_h v - \mathcal{P}_h w = ((v-w) - \overline{(v-w)}) + (\mathcal{Q}\overline{v} - \mathcal{Q}\overline{w})$.
The first term $(v-w) - \overline{(v-w)}$ has zero cell average on every element, while the second term $\mathcal{Q}\overline{v} - \mathcal{Q}\overline{w}$ is piecewise constant. Thus, these two terms are $L^2$ orthogonal. Applying Lemma~\ref{lem:proj_nonexpansive} to the second term and the decomposition \eqref{eq:cell_avg_decomposition} to $v - w$, we obtain
\begin{align*}
\norm{\mathcal{P}_h v - \mathcal{P}_h w}{L^2}^2
&= \norm{(v-w) - \overline{(v-w)}}{L^2}^2 + \norm{\mathcal{Q}\overline{v} - \mathcal{Q}\overline{w}}{L^2}^2 \\
&\leq \norm{(v-w) - \overline{(v-w)}}{L^2}^2 + \norm{\overline{v} - \overline{w}}{L^2}^2
= \norm{v-w}{L^2}^2.
\end{align*}
Therefore, we conclude the proof.
\end{proof}
In particular, taking $w = u^\mathrm{exact}$ recovers Theorem~\ref{thm:limiter_accuracy}, since Theorem~\ref{thm:mass_conservation} gives $\overline{u^\mathrm{exact}}\in\Lambda$ and hence $\mathcal{P}_h u^\mathrm{exact} = u^\mathrm{exact}$.
\begin{remark}\label{rem:steady_elliptic}
The analysis in this section is independent of the time-stepping scheme and holds at a single time step. In particular, the results apply directly to the time-independent problem: the cell average limiter is nonexpansive, and preserves both high-order accuracy and $L^2$ stability for a high-order DG discretization of a steady elliptic problem.
\end{remark}

\section{Error analysis}\label{sec:error_analysis}
In this section, we establish $L^2$ error estimates for the operator-splitting scheme in Section~\ref{sec:operator_splitting} applied to the heat equation and the convection-diffusion equation with the cell average limiter $\mathcal{P}_h$.
Our goal is to understand the error behavior of the optimization-based cell average limiter alone, without Zhang--Shu scaling postprocessing.
\par
The bilinear form $a_\mathcal{D}$ is coercive \cite{riviere2008discontinuous} for sufficiently large penalty parameter $\sigma$, namely there exists $\kappa > 0$, such that $\forall v_h \in V_h^0$,
\begin{align}\label{eq:coercivity}
\kappa\norm{v_h}{\mathrm{DG}}^2 \leq a_\mathcal{D}(v_h, v_h).
\end{align}
For SIPG, we can set $\kappa = 1/2$.
The DG form associated to the advection operator $\vec{b}\cdot\grad{u}$ satisfies the following positivity.
\begin{lemma}\label{lem:adv_pos}
If $\div\vec{b} = 0$ in $\Omega$ and $\vec{b}\cdot\normal = 0$ on $\partial\Omega$, then $a_{\mathrm{adv}}(v_h, v_h) \geq 0$ for all $v_h \in V_h$. We have
\begin{align*}
a_{\mathrm{adv}}(v_h, v_h) = \frac{1}{2}\sum_{e\in\Gammah}\int_e \abs{\vec{b}\cdot\normal_e} \jump{v_h}^2.
\end{align*}
\end{lemma}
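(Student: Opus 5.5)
The plan is to compute $a_{\mathrm{adv}}(v_h,v_h)$ element by element. The volume term becomes a boundary integral, and the face contributions then combine with the upwind term into a perfect square. Fix $v_h\in V_h$. On each element $E$, since $v_h$ is smooth there and $\div\vec{b}=0$, we have $v_h\vec{b}\cdot\grad v_h = \frac12\vec{b}\cdot\grad(v_h^2) = \frac12\div(\vec{b}v_h^2)$. The divergence theorem then gives
\begin{align*}
-\sum_{E\in\setE_h}\int_E v_h\vec{b}\cdot\grad v_h = -\frac12\sum_{E\in\setE_h}\int_{\partial E}(\vec{b}\cdot\normal_E)\,(v_h|_E)^2 .
\end{align*}

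Next I regroup the element boundary integrals face by face. On boundary faces $\vec{b}\cdot\normal=0$, so those contributions vanish. This is also why $a_{\mathrm{adv}}$ only needs interior faces. On an interior face $e=\partial E_{i^-}\cap\partial E_{i^+}$, the outward normal of $E_{i^-}$ is $\normal_e$ and that of $E_{i^+}$ is $-\normal_e$. Writing $v^\pm = v_h|_{E_{i^\pm}}$, the face contribution is $-\frac12\int_e(\vec{b}\cdot\normal_e)\big((v^-)^2-(v^+)^2\big)$. Note that $\vec{b}\in W^{1,\infty}$ has a single-valued normal trace on $e$. Hence
\begin{align*}
a_{\mathrm{adv}}(v_h,v_h) = \sum_{e\in\Gammah}\int_e (\vec{b}\cdot\normal_e)\Big(\upwind{v_h}(v^- - v^+) - \tfrac12\big((v^-)^2-(v^+)^2\big)\Big).
\end{align*}

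The remaining step is a pointwise algebraic identity on each face, with two cases according to the upwind definition. Here I use $(v^-)^2-(v^+)^2=(v^-+v^+)(v^--v^+)$.
\begin{itemize}
\item If $\vec{b}\cdot\normal_e\ge0$, then $\upwind{v_h}=v^-$. The bracket equals $(v^--v^+)\big(v^- - \tfrac12(v^-+v^+)\big)=\tfrac12(v^--v^+)^2$, so the integrand is $\tfrac12(\vec{b}\cdot\normal_e)\jump{v_h}^2=\tfrac12\abs{\vec{b}\cdot\normal_e}\jump{v_h}^2$.
\item If $\vec{b}\cdot\normal_e<0$, then $\upwind{v_h}=v^+$. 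The bracket equals $-\tfrac12(v^--v^+)^2$, so the integrand is $-\tfrac12(\vec{b}\cdot\normal_e)\jump{v_h}^2=\tfrac12\abs{\vec{b}\cdot\normal_e}\jump{v_h}^2$.
\end{itemize}

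Summing the faces yields the stated identity, and nonnegativity follows immediately. The only delicate points are bookkeeping. One must check the orientation convention, namely that $\normal_e$ points from $E_{i^-}$ to $E_{i^+}$, consistent with $\jump{\cdot}$. One must also justify that $\vec{b}\cdot\normal_e$ is well defined and single valued on faces. Finally, the case split must be applied pointwise on $e$, since the sign of $\vec{b}\cdot\normal_e$ may vary along a face.
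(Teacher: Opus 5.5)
Your proposal is correct and follows essentially the same route as the paper: rewrite the volume term with $\div\vec{b}=0$ and the divergence theorem as element boundary integrals, regroup them face by face (boundary faces drop out since $\vec{b}\cdot\normal=0$), and complete the square in each upwind case. Your remarks on splitting cases pointwise along each face and on the single-valued trace of $\vec{b}\cdot\normal_e$ make explicit some bookkeeping the paper leaves implicit.
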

\begin{proof}
On each element $E$, let $\normal_E$ denote the outer normal of $E$. Since $\vec{b}$ is divergence free, we have
\begin{align*}
-\int_E v_h\,\vec{b}\cdot\grad v_h = -\frac{1}{2}\int_{\partial E} v_h^2(\vec{b}\cdot\normal_E).
\end{align*}
Summing over all elements and collecting contributions face by face: on each interior face $e$, the element terms contribute $\frac{1}{2}[(v_h^+)^2-(v_h^-)^2](\vec{b}\cdot\normal_e)$.
If $\vec{b}\cdot\normal_e \geq 0$, $\upwind{v_h} = v_h^-$ and the face term contributes $v_h^-(v_h^- - v_h^+)(\vec{b}\cdot\normal_e)$; the sum equals $\frac{1}{2}(v_h^- - v_h^+)^2(\vec{b}\cdot\normal_e) = \frac{1}{2}\jump{v_h}^2\abs{\vec{b}\cdot\normal_e} \geq 0$.
If $\vec{b}\cdot\normal_e < 0$, $\upwind{v_h} = v_h^+$ and the face term contributes $v_h^+(v_h^- - v_h^+)(\vec{b}\cdot\normal_e)$; the sum equals $-\frac{1}{2}(v_h^+ - v_h^-)^2(\vec{b}\cdot\normal_e) = \frac{1}{2}\jump{v_h}^2\abs{\vec{b}\cdot\normal_e} \geq 0$.
Boundary face contributions vanish since $\vec{b}\cdot\normal = 0$ on $\partial\Omega$.
\end{proof}
The Poincar\'e inequality \cite{liu2023convergence} holds: there exists a constant $C_\mathrm{P} > 0$ independent of the mesh size $h$, such that $\forall v_h \in V_h$,
\begin{align*}
\norm{v_h}{L^2} \leq C_\mathrm{P}\Big(\norm{v_h}{\mathrm{DG}}^2 + \frac{1}{\abs{\Omega}}\Big|\int_\Omega v_h\Big|^2\Big)^{1/2}.
\end{align*}
Define the $L^2$ projection $\pi_h: L^2(\Omega) \to V_h$ as follows: for a given function $v \in L^2(\Omega)$ and any $E \in \setE_h$,
\begin{align*}
\int_E (\pi_h v - v) w_h = 0, \quad \forall w_h \in \IQ_k(E).
\end{align*}
Notice $1 \in \IQ_k(E)$, $\pi_h$ preserves cell averages. We have $\on{\overline{\pi_h v}}{E} = \on{\overline{v}}{E}$ for all $E\in\setE_h$.
In particular, if $u^0 \in [m,M]$, then the cell averages of $U_h^0 = \pi_h u^0$ lie in $[m,M]$ and the cell average limiter $\mathcal{P}_h$ acts as the identity on $U_h^0$. 
In the rest of this paper, we use $C$ to denote a generic constant that takes different values at different places and that is independent of mesh size $h$ and time step size $\tau$. 
The following approximation property holds: for $v \in H^{k+1}(\Omega)$,
\begin{align}\label{eq:L2_proj_approx}
\norm{\pi_h v - v}{L^2(E)} \leq Ch^{k+1}\abs{v}_{H^{k+1}(E)}
\quad\text{and}\quad
\abs{\pi_h v - v}_{H^s(E)} \leq Ch^{k+1-s}\abs{v}_{H^{k+1}(E)}.
\end{align}
Define the elliptic projection $\Pi_h: H^2(\setE_h) \to V_h$ as follows: for a given $v \in H^2(\setE_h)$ and any $v_h \in V_h^0$,
\begin{align*}
a_\mathcal{D}(v - \Pi_h v, v_h) = 0 \quad\text{subject to}\quad (\Pi_h v, 1) = (v, 1).
\end{align*}
The following approximation can be obtained from the error analysis for elliptic problems on convex domains \cite{riviere2008discontinuous}:
for $v \in H^{k+1}(\setE_h)$,
\begin{align}\label{eq:elliptic_proj_approx}
\norm{v - \Pi_h v}{L^2} + h\norm{v - \Pi_h v}{\mathrm{DG}} &\leq C h^{k+1} \norm{v}{H^{k+1}(\setE_h)}.
\end{align}
\par
For simplicity, we adopt the notation $u^n = u(t^n)$. Let $\theta_h^n = u_h^n - \Pi_h u^n$ denote the discrete error before postprocessing. We decompose the error as $u^n - U_h^n = \zeta^n - \Theta_h^n$, where the approximation error $\zeta^n = u^n - \Pi_h u^n$. The $\Theta_h^n = U_h^n - \Pi_h u^n$ denotes the discrete error after postprocessing. 
\begin{theorem}\label{thm:Theta_bound}
The approximation error and the discrete errors before and after postprocessing satisfy the following bound
\begin{align}\label{eq:Theta_bound}
\norm{\Theta_h^n}{L^2}^2 \leq \norm{\theta_h^n}{L^2}^2 + \norm{\overline{\zeta^n}}{L^2}^2 \leq \norm{\theta_h^n}{L^2}^2 + \norm{\zeta^n}{L^2}^2.
\end{align}
\end{theorem}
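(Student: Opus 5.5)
Since $U_h^n = \mathcal{P}_h u_h^n$, we have $\Theta_h^n = \mathcal{P}_h u_h^n - \Pi_h u^n$. The natural move is to compare with $\mathcal{P}_h$ applied to something whose cell averages already lie in $\Lambda$. The exact solution $u^n$ is the right candidate: by Theorem~\ref{thm:mass_conservation}, $\overline{u^n}\in\Lambda$, so $\mathcal{Q}\overline{u^n}=\overline{u^n}$ and $\mathcal{P}_h u^n = u^n$. However, $\Pi_h u^n$ itself need not have admissible cell averages, so we cannot simply write $\Theta_h^n = \mathcal{P}_h u_h^n - \mathcal{P}_h \Pi_h u^n$. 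Instead, I would split $\Theta_h^n$ into its cell-average part and its zero-mean part and treat each separately via the Pythagorean decomposition \eqref{eq:cell_avg_decomposition}.

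For the high-order part, \eqref{eq:adjust_cell_avg} gives $U_h^n - \overline{U_h^n} = u_h^n - \overline{u_h^n}$. Hence $\Theta_h^n - \overline{\Theta_h^n} = \theta_h^n - \overline{\theta_h^n}$. For the cell-average part, $\overline{\Theta_h^n} = \mathcal{Q}\overline{u_h^n} - \overline{\Pi_h u^n}$. I would write this as
\begin{align*}
\overline{\Theta_h^n} = \big(\mathcal{Q}\overline{u_h^n} - \overline{u^n}\big) + \big(\overline{u^n} - \overline{\Pi_h u^n}\big) = \big(\mathcal{Q}\overline{u_h^n} - \mathcal{Q}\overline{u^n}\big) + \overline{\zeta^n}.
\end{align*}
A plain triangle inequality here would only give $(\|\overline{\theta_h^n}+\overline{\zeta^n}\| + \|\overline{\zeta^n}\|)^2$, which is weaker than the sum of squares claimed in \eqref{eq:Theta_bound}. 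This is the main obstacle.

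To obtain the sum of squares, I would use Lemma~\ref{lem:proj_firm} with $u=\overline{u_h^n}$ and $z=\overline{\Pi_h u^n}$. The difficulty is that this requires $z\in\Lambda$. The conservation part holds, since $(\Pi_h u^n,1)=(u^n,1)=(u^0,1)$, but the bounds need not hold. I therefore plan to use the firmly nonexpansive variational inequality \eqref{eq:proj_nonexpansive_1} with $z=\overline{u^n}\in\Lambda$:
\begin{align*}
\|\mathcal{Q}\overline{u_h^n}-\overline{u^n}\|^2 \le (\overline{u_h^n}-\overline{u^n},\,\mathcal{Q}\overline{u_h^n}-\overline{u^n}).
\end{align*}
Set $a=\mathcal{Q}\overline{u_h^n}-\overline{u^n}$, $p=\overline{\theta_h^n}$ and $q=\overline{\zeta^n}$, so that $\overline{u_h^n}-\overline{u^n}=p-q$ and the inequality reads $\|a\|^2\le(p-q,a)$. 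Then
\begin{align*}
\|a+q\|^2 = \|a\|^2 + 2(a,q) + \|q\|^2 \le (p,a) + (a,q) + \|q\|^2 .
\end{align*}
This does not obviously close, so I would expect to invest effort here. One option is Lemma~\ref{lem:proj_firm} in the form $\|p-q-a\|^2+\|a\|^2\le\|p-q\|^2$, combined with a polarization argument. Another is to verify the clean identity $\|a+q\|^2\le\|p\|^2+\|q\|^2$ directly, by testing with a convex combination of $z=\overline{u^n}$.

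If that fails, I would check whether $\overline{\Pi_h u^n}$ can be replaced by $\overline{u^n}$ in the definition of $\Theta$ as the authors intend, for instance through the term-wise identity $\overline{\Theta_h^n}=a+q$ with $\|a\|\le\|p-q\|$ from Theorem~\ref{thm:accuracy_L2}. After the cell-average bound is settled, I would add back the high-order part and use $\|\theta-\overline\theta\|^2+\|\overline\theta\|^2=\|\theta\|^2$ to obtain the first inequality in \eqref{eq:Theta_bound}. The second inequality follows immediately from $\|\overline{\zeta^n}\|\le\|\zeta^n\|$, which is a consequence of \eqref{eq:cell_avg_decomposition}.
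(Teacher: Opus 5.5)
Your reduction matches the paper's. The zero-mean parts of $\Theta_h^n$ and $\theta_h^n$ coincide (this common part is the paper's $\eta_h^n$), so by orthogonality everything reduces to the piecewise-constant bound $\|a+q\|^2\le\|p\|^2+\|q\|^2$. You also correctly anchor at $z=\overline{u^n}\in\Lambda$ rather than at $\overline{\Pi_h u^n}$, which need not satisfy the bounds.

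The gap is that you stop at exactly this inequality, which is the only nontrivial step of the theorem, and none of your fallbacks proves it. Theorem~\ref{thm:accuracy_L2} plus the triangle inequality yields only $\|\overline{\Theta_h^n}\|\le\|p-q\|+\|q\|$, which carries the cross term you yourself flagged. Redefining $\Theta_h^n$ with $\overline{u^n}$ in place of $\overline{\Pi_h u^n}$ changes the statement rather than proving it. Your first option, Lemma~\ref{lem:proj_firm} with $u=\overline{u_h^n}$ and $z=\overline{u^n}$, is precisely the tool the paper uses, but it has to be combined with the right expansion.

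The missing idea is to expand around the limiter correction $r=\mathcal{Q}\overline{u_h^n}-\overline{u_h^n}=a-(p-q)$ rather than around $a$. Then $\overline{\Theta_h^n}=p+r$. The inequality \eqref{eq:proj_nonexpansive_1} with $z=\overline{u^n}$ says $(r,a)\le 0$, i.e.\ $(r,p-q)\le-\|r\|^2$. This is equivalent both to Lemma~\ref{lem:proj_firm} in your form $\|r\|^2+\|a\|^2\le\|p-q\|^2$ and to your inequality $\|a\|^2\le(p-q,a)$. Consequently
\begin{align*}
\|p+r\|^2 = \|p\|^2 + 2(r,p-q) + 2(r,q) + \|r\|^2 \le \|p\|^2 - \|r\|^2 + 2(r,q) \le \|p\|^2 + \|q\|^2,
\end{align*}
using Young's inequality in the last step; this is the paper's computation with $s=q$.

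In fact your own line already closes. It requires $(p+q,a)\le\|p\|^2$. Writing $p=(p-q)+q$ and using $2(q,a-(p-q))\le\|a-(p-q)\|^2+\|q\|^2$ reduces this to $\|a\|^2\le(p-q,a)$, which you have. With this step supplied, adding back $\|\theta_h^n-\overline{\theta_h^n}\|^2$ and using $\|\overline{\zeta^n}\|\le\|\zeta^n\|$ completes the proof as you describe.
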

\begin{proof}
Define $\eta_h^n = (u_h^n - \overline{u_h^n}) - (\Pi_h u^n - \overline{\Pi_h u^n})$. Recall $U_h^n = \mathcal{P}_h u_h^n$, we have
$\Theta_h^n = \eta_h^n + (\mathcal{Q}\overline{u_h^n} - \overline{\Pi_h u^n})$ and
$\theta_h^n = \eta_h^n + (\overline{u_h^n} - \overline{\Pi_h u^n})$.
Notice that $\eta_h^n$ has vanishing cell average on every element, while the $\mathcal{Q}\overline{u_h^n} - \overline{\Pi_h u^n}$ and $\overline{u_h^n} - \overline{\Pi_h u^n}$ are piecewise constant. Thus, they are $L^2$ orthogonal in each identity. 
Define $x = \overline{u_h^n}$ and $y = \overline{\Pi_h u^n}$, the Pythagorean identity gives
\begin{subequations}\label{eq:Theta_pythagoras}
\begin{align}
\norm{\Theta_h^n}{L^2}^2 &= \norm{\eta_h^n}{L^2}^2 + \norm{\mathcal{Q}x - y}{L^2}^2,\\
\norm{\theta_h^n}{L^2}^2 &= \norm{\eta_h^n}{L^2}^2 + \norm{x - y}{L^2}^2.
\end{align}
\end{subequations}
Let $z = \overline{u^n}$. By Theorem~\ref{thm:mass_conservation}, $z \in \Lambda$ and $\overline{\zeta^n} = z - y$. Adding and subtracting $z$ in $\mathcal{Q}x - y$ and $x - y$, we have
\begin{align*}
\norm{\mathcal{Q}x - y}{L^2}^2 &= \norm{\mathcal{Q}x - z}{L^2}^2 + 2(\mathcal{Q}x - z,\, z - y) + \norm{z - y}{L^2}^2, \\
\norm{x - y}{L^2}^2 &= \norm{x - z}{L^2}^2 + 2(x - z,\, z - y) + \norm{z - y}{L^2}^2.
\end{align*}
Let $r = \mathcal{Q}x - x$ and $s = z - y$. Subtracting the second identity above from the first, we obtain
\begin{align*}
\norm{\mathcal{Q}x - y}{L^2}^2 - \norm{x - y}{L^2}^2
= \norm{\mathcal{Q}x - z}{L^2}^2 - \norm{x - z}{L^2}^2 + 2(r, s).
\end{align*}
Recall $z\in\Lambda$, applying Lemma~\ref{lem:proj_firm} yields $\norm{\mathcal{Q}x - z}{L^2}^2 - \norm{x - z}{L^2}^2 \leq -\norm{r}{L^2}^2$. By Cauchy--Schwarz's inequality, we have
\begin{align*}
\norm{\mathcal{Q}x - y}{L^2}^2 - \norm{x - y}{L^2}^2 \leq -\norm{r}{L^2}^2 + 2(r, s) \leq \norm{s}{L^2}^2 = \norm{\overline{\zeta^n}}{L^2}^2.
\end{align*}
Taking cell averages is the $L^2$ orthogonal projection onto $\IP_0(\setE_h)$, the decomposition \eqref{eq:cell_avg_decomposition} applied to $\zeta^n$ gives $\norm{\overline{\zeta^n}}{L^2} \leq \norm{\zeta^n}{L^2}$. Combining this with \eqref{eq:Theta_pythagoras}, we conclude the proof.
\end{proof}

\subsection{Example~1. Heat equation}
Let us derive an error estimate for the heat equation with implicit Euler time discretization and interior penalty DG space discretization.
Set $u_h^0 = U_h^0 = \pi_h u^0$, where $\pi_h$ denotes the $L^2$ projection onto $V_h$. Since $u^0$ lies in the admissible set $[m, M]$, so does the cell average of $U_h^0$, and no postprocessing is needed at the initial step. 
For $n = 0, 1, \cdots, \Nst - 1$, we compute $U_h^{n+1}$ from $U_h^n$ in two steps.
\begin{itemize}
\item[] Step 1 (Solve subproblem $\mathcal{H}$).~ Given $U_h^n\in V_h$, compute $u_h^{n+1} \in V_h$ such that
\begin{align}\label{eq:implicit_euler_heat}
(u_h^{n+1} - U_h^n, v_h) + \tau a_\mathcal{D}(u_h^{n+1}, v_h) = 0, && \forall v_h \in V_h.
\end{align}
\item[] Step 2 (Solve subproblem $\mathcal{L}$).~ Apply optimization-based cell average limiter $\mathcal{P}_h$
\begin{align*}
U_h^{n+1} = \mathcal{P}_h u_h^{n+1}.
\end{align*}
\end{itemize}

We have the following error bound.
\begin{theorem}\label{thm:error_bound_heat}
Assume $u \in L^\infty(0,T; H^{k+1})$, $\partial_t{u} \in L^2(0,T; H^{k+1})$, and $\partial_{tt} u \in L^2(0,T; L^2)$. For $k\geq 1$, prescribe the condition $\tau \geq ch^{2(k+1)/3}$. Then there exists $C > 0$ independent of $h$ and $\tau$, such that
\begin{align*}
\max_{0 \leq n \leq \Nst} \norm{U_h^n - u^n}{L^2} \leq C(\tau + h^{k+1}).
\end{align*}
\end{theorem}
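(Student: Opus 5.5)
The proof is a standard energy argument for the pre-limiting error $\theta_h^{n+1}$, combined with Theorem~\ref{thm:Theta_bound} at each step. The limiter's contribution then enters only through the accumulated term $\sum_n \norm{\overline{\zeta^n}}{L^2}^2$.

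\textbf{Error equation.} The exact solution satisfies, for all $v_h\in V_h$,
\begin{align*}
(u^{n+1}-u^n,v_h)+\tau a_\mathcal{D}(u^{n+1},v_h) = (\rho^{n+1},v_h),
\end{align*}
where $\rho^{n+1}=u^{n+1}-u^n-\tau\partial_t u(t^{n+1})$ is the truncation remainder. Consistency of SIPG with the Neumann condition gives this identity. We then subtract $\Pi_h u$ and use $a_\mathcal{D}(u-\Pi_h u,v_h)=0$. This holds for $v_h\in V_h^0$, and for constants too, since $a_\mathcal{D}(\cdot,1)=0$; hence it holds on all of $V_h$. The result is
\begin{align*}
(\theta_h^{n+1}-\Theta_h^n,v_h)+\tau a_\mathcal{D}(\theta_h^{n+1},v_h) = (\zeta^{n+1}-\zeta^n,v_h) - (\rho^{n+1},v_h).
\end{align*}

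\textbf{Energy estimate.} Take $v_h=\theta_h^{n+1}$. Conservation (Theorem~\ref{thm:mass_conservation}) together with $(\Pi_h u,1)=(u,1)$ gives $(\theta_h^{n+1},1)=0$, so coercivity \eqref{eq:coercivity} applies and $a_\mathcal{D}\ge 0$. Using $2(a-b,a)\ge\|a\|^2-\|b\|^2$, Young's inequality, \eqref{eq:elliptic_proj_approx} applied to $\partial_t u$ (giving $\|\zeta^{n+1}-\zeta^n\|\le Ch^{k+1}\int_{t^n}^{t^{n+1}}|\partial_t u|_{H^{k+1}}$), and Taylor's formula ($\|\rho^{n+1}\|^2\le C\tau^3\int\|\partial_{tt}u\|^2$), we obtain
\begin{align*}
(1-\tau)\norm{\theta_h^{n+1}}{L^2}^2 \le \norm{\Theta_h^n}{L^2}^2 + C h^{2(k+1)}\int_{t^n}^{t^{n+1}}\abs{\partial_t u}_{H^{k+1}}^2 + C\tau^2\int_{t^n}^{t^{n+1}}\norm{\partial_{tt}u}{L^2}^2 .
\end{align*}

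\textbf{Limiter step.} Theorem~\ref{thm:Theta_bound} gives $\norm{\Theta_h^{n+1}}{L^2}^2\le\norm{\theta_h^{n+1}}{L^2}^2+\norm{\overline{\zeta^{n+1}}}{L^2}^2$. Chaining this into the energy estimate and applying discrete Gronwall with $\Theta_h^0=\pi_h u^0-\Pi_h u^0=O(h^{k+1})$ yields
\begin{align*}
\max_n\norm{\Theta_h^n}{L^2}^2 \le C\big(\tau^2 + h^{2(k+1)}\big) + C\sum_{n=1}^{\Nst}\norm{\overline{\zeta^{n}}}{L^2}^2 .
\end{align*}

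\textbf{Main obstacle: the accumulated limiter term.} The sum has $T/\tau$ terms, each bounded via \eqref{eq:elliptic_proj_approx} by $Ch^{2(k+1)}$, so it is at most $C h^{2(k+1)}/\tau$. This must be at most $C(\tau^2+h^{2(k+1)})$. Since $h^{2(k+1)}/\tau\le\tau^2$ exactly when $\tau^3\ge h^{2(k+1)}$, the reverse CFL condition $\tau\ge ch^{2(k+1)/3}$ gives $h^{2(k+1)}/\tau\le c^{-3}\tau^2$, and we conclude $\max_n\norm{\Theta_h^n}{L^2}\le C(\tau+h^{k+1})$.

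I would also check whether a superconvergent bound on $\overline{\zeta^n}$ (a cell-average duality estimate) could remove the condition. For the stated theorem, however, the crude bound plus reverse CFL suffices. Finally, the triangle inequality $\norm{U_h^n-u^n}{L^2}\le\norm{\zeta^n}{L^2}+\norm{\Theta_h^n}{L^2}$ together with \eqref{eq:elliptic_proj_approx} completes the proof.
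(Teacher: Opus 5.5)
Your proposal is correct and follows the paper's proof: you use the same elliptic-projection error splitting, the same per-step application of Theorem~\ref{thm:Theta_bound}, and the same reverse-CFL control of the accumulated term $\sum_n\norm{\overline{\zeta^n}}{L^2}^2\le Ch^{2(k+1)}/\tau$. The differences are only bookkeeping: you absorb the right-hand side with an $L^2$ Young inequality and close with discrete Gronwall on $\Theta_h^n$, which tacitly requires $\tau$ to be bounded, for instance $\tau\le 1/2$, a harmless restriction. The paper instead uses Poincar\'e to absorb into the coercive DG-norm term, telescopes in $\theta_h^n$ without Gronwall, and finishes via Theorem~\ref{thm:limiter_accuracy}.
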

\begin{proof}
The DG discretization is consistent, in the sense that the exact solution $u$ satisfies for $n\geq0$,
\begin{align}\label{eq:consistency_heat}
((\partial_t u)^{n+1}, v_h) + a_\mathcal{D}(u^{n+1}, v_h) = 0, && \forall v_h\in V_h.
\end{align}
Subtracting \eqref{eq:implicit_euler_heat} from \eqref{eq:consistency_heat}, using the Taylor expansion of $(\partial_t u)^{n+1}$ with integral remainder, we get
\begin{align*}
(u^{n+1} - u_h^{n+1} - (u^n - U_h^n), v_h) + \tau a_\mathcal{D}(u^{n+1} - u_h^{n+1}, v_h) = \int_{t^n}^{t^{n+1}} (t^n - t)(\partial_{tt} u, v_h), && \forall v_h\in V_h.
\end{align*}
By definition, we have $u^{n+1} - u_h^{n+1} = \zeta^{n+1} - \theta_h^{n+1}$ and $u^n - U_h^n = \zeta^n - \Theta_h^n$. Using $a_\mathcal{D}(\zeta^{n+1}, v_h) = 0$, the above reduces to
\begin{align}\label{eq:theta_eq_heat}
(\theta_h^{n+1} - \Theta_h^n, v_h) + \tau a_\mathcal{D}(\theta_h^{n+1}, v_h)
= (\zeta^{n+1} - \zeta^n, v_h) + \int_{t^n}^{t^{n+1}} (t-t^n)(\partial_{tt} u, v_h), && \forall v_h \in V_h^0.
\end{align}
Since $u_h^{n+1}$ and $\Pi_h u^{n+1}$ have the same total mass, by Theorem~\ref{thm:mass_conservation} and the definition of $\Pi_h$, we take $v_h = \theta_h^{n+1}\in V_h^0$ in \eqref{eq:theta_eq_heat}. Then, the inequality $(a-b)a \geq \frac{1}{2}(a^2-b^2)$ and coercivity \eqref{eq:coercivity} implies
\begin{align}\label{eq:energy_ineq_heat}
\frac{1}{2}(\norm{\theta_h^{n+1}}{L^2}^2 - \norm{\Theta_h^n}{L^2}^2) + \tau\kappa\norm{\theta_h^{n+1}}{\mathrm{DG}}^2
\leq (\zeta^{n+1} - \zeta^n, \theta_h^{n+1}) + \int_{t^n}^{t^{n+1}} (t-t^n)(\partial_{tt} u, \theta_h^{n+1}).
\end{align}
Since $u^n \in [m,M]$, together with Theorem~\ref{thm:mass_conservation}, we have $\overline{u^n}\in\Lambda$. Thus, Theorem~\ref{thm:Theta_bound} applies and yields \eqref{eq:Theta_bound}. Substituting \eqref{eq:Theta_bound} into \eqref{eq:energy_ineq_heat}, we get
\begin{align}\label{eq:after_sub_heat}
\frac{1}{2}\norm{\theta_h^{n+1}}{L^2}^2 + \tau\kappa\norm{\theta_h^{n+1}}{\mathrm{DG}}^2
&\leq \frac{1}{2}\norm{\theta_h^n}{L^2}^2 + \frac{1}{2}\norm{\zeta^n}{L^2}^2 \nonumber\\
&+ (\zeta^{n+1}-\zeta^n, \theta_h^{n+1})
+ \int_{t^n}^{t^{n+1}} (t-t^n)(\partial_{tt}u, \theta_h^{n+1}).
\end{align}
Applying Cauchy--Schwarz's, Poincar\'e, and Young's inequalities to the third term on the right-hand side, followed by using \eqref{eq:elliptic_proj_approx} and $\norm{u^{n+1}-u^n}{H^{k+1}} \leq \tau^{1/2}\norm{\partial_t u}{L^2(t^n,t^{n+1};H^{k+1})}$, we get
\begin{align}\label{eq:heat_bound_rhs1}
\abs{(\zeta^{n+1}-\zeta^n,\, \theta_h^{n+1})} 
&\leq \frac{C_\mathrm{P}^2}{\tau\kappa}\norm{\zeta^{n+1}-\zeta^n}{L^2}^2 + \frac{\kappa\tau}{4}\norm{\theta_h^{n+1}}{\mathrm{DG}}^2 \nonumber\\
&\leq \frac{\kappa\tau}{4}\norm{\theta_h^{n+1}}{\mathrm{DG}}^2 + \frac{Ch^{2k+2}}{\kappa}\norm{\partial_t u}{L^2(t^n,t^{n+1};H^{k+1})}^2.
\end{align}
For the last term in \eqref{eq:after_sub_heat}, applying Cauchy--Schwarz's, Poincar\'e, and Young's inequalities again, we have
\begin{align}\label{eq:heat_bound_rhs2}
\Big|\int_{t^n}^{t^{n+1}}(t-t^n)(\partial_{tt}u,\,\theta_h^{n+1})\Big|
&\leq \frac{\tau^{3/2}}{\sqrt{3}} \norm{\partial_{tt}u}{L^2(t^n,t^{n+1};L^2)}\norm{\theta_h^{n+1}}{L^2} \nonumber\\
&\leq \frac{C_\mathrm{P}^2\tau^2}{3\kappa}
\norm{\partial_{tt}u}{L^2(t^n,t^{n+1}; L^2)}^2
+ \frac{\kappa\tau}{4}\norm{\theta_h^{n+1}}{\mathrm{DG}}^2.
\end{align}
Substituting \eqref{eq:heat_bound_rhs1} and \eqref{eq:heat_bound_rhs2} into \eqref{eq:after_sub_heat}, we have
\begin{multline}\label{eq:recursion_heat}
\frac{1}{2}\norm{\theta_h^{n+1}}{L^2}^2 - \frac{1}{2}\norm{\theta_h^n}{L^2}^2 + \frac{\tau\kappa}{2}\norm{\theta_h^{n+1}}{\mathrm{DG}}^2 \leq \frac{1}{2}\norm{\zeta^n}{L^2}^2 \\
+ \frac{Ch^{2k+2}}{\kappa}\norm{\partial_t u}{L^2(t^n,t^{n+1};H^{k+1})}^2 + \frac{C\tau^2}{\kappa}\norm{\partial_{tt}u}{L^2(t^n,t^{n+1}; L^2)}^2.
\end{multline}
For any integer $1\leq N\leq \Nst$, summing \eqref{eq:recursion_heat} over $n = 0, \cdots, N-1$, we obtain
\begin{multline}\label{eq:1_heat}
\frac{1}{2}\norm{\theta_h^N}{L^2}^2 + \frac{\tau\kappa}{2}\sum_{n=0}^{N-1}\norm{\theta_h^{n+1}}{\mathrm{DG}}^2
\leq \frac{1}{2}\norm{\theta_h^0}{L^2}^2 + \frac{1}{2}\sum_{n=0}^{N-1}\norm{\zeta^n}{L^2}^2 \\
+ \frac{Ch^{2k+2}}{\kappa}\norm{\partial_t u}{L^2(0,T;H^{k+1})}^2
+ \frac{C\tau^2}{\kappa}\norm{\partial_{tt}u}{L^2(0,T;L^2)}^2.
\end{multline}
It remains to bound the accumulated approximation error. Using the property of elliptic projection \eqref{eq:elliptic_proj_approx} and $N \leq T/\tau$, we have
\begin{align}\label{eq:heat_accumulated_approx_err}
\sum_{n=0}^{N-1}\norm{\zeta^n}{L^2}^2 \leq \frac{CT}{\tau}h^{2k+2}\norm{u}{L^\infty(0,T;H^{k+1})}^2.
\end{align}
The condition $\tau \geq ch^{2(k+1)/3}$ implies $h^{2k+2}\leq C\tau^3$. Thus, substituting \eqref{eq:heat_accumulated_approx_err} into \eqref{eq:1_heat}, using $\norm{\theta_h^0}{L^2}^2\leq Ch^{2k+2}$, for any integer $1\leq N\leq \Nst$, we obtain
\begin{align}\label{eq:L2_bound_theta_h_heat}
\norm{\theta_h^N}{L^2}^2 + \tau\kappa\sum_{n=0}^{N-1}\norm{\theta_h^{n+1}}{\mathrm{DG}}^2 \leq C(\tau^2 + h^{2k+2}).
\end{align}
By error decomposition $u^n - U_h^n = \zeta^n - \Theta_h^n$, Theorem~\ref{thm:limiter_accuracy}, and the triangle inequality, we have
\begin{align*}
\norm{u^n - U_h^n}{L^2} = \norm{\zeta^n - \Theta_h^n}{L^2} \leq \norm{\zeta^n - \theta_h^n}{L^2} \leq \norm{\theta_h^n}{L^2} + \norm{\zeta^n}{L^2}.
\end{align*}
From \eqref{eq:L2_bound_theta_h_heat}, we know $\max_n\norm{\theta_h^n}{L^2} \leq C(\tau + h^{k+1})$. In addition, \eqref{eq:elliptic_proj_approx} gives
$\norm{\zeta^n}{L^2} \leq Ch^{k+1}\norm{u(t^n)}{H^{k+1}}$.
Combining these together, we conclude the proof.
\end{proof}

\subsection{Example~2. Convection-diffusion equation}
Let us derive an error estimate for the convection-diffusion equation with implicit Euler time discretization and interior penalty DG space discretization. 
Set $u_h^0 = U_h^0 = \pi_h u^0$, where $\pi_h$ denotes the $L^2$ projection onto $V_h$. Since $u^0$ lies in the admissible set $[m, M]$, so does the cell average of $U_h^0$, and no postprocessing is needed at the initial step. 
For $n = 0, 1, \cdots, \Nst - 1$, we compute $U_h^{n+1}$ from $U_h^n$ in two steps.
\begin{itemize}
\item[] Step 1 (Solve subproblem $\mathcal{H}$).~ Given $U_h^n\in V_h$, compute $u_h^{n+1} \in V_h$ such that
\begin{align}\label{eq:implicit_euler_conv_diff}
(u_h^{n+1} - U_h^n, v_h) + \tau D a_\mathcal{D}(u_h^{n+1}, v_h) + \tau a_{\mathrm{adv}}(u_h^{n+1}, v_h) = 0, && \forall v_h \in V_h.
\end{align}
\item[] Step 2 (Solve subproblem $\mathcal{L}$).~ Apply optimization-based cell average limiter $\mathcal{P}_h$
\begin{align*}
U_h^{n+1} = \mathcal{P}_h u_h^{n+1}.
\end{align*}
\end{itemize}

We have the following error bound.
\begin{theorem}\label{thm:error_bound_conv_diff}
Given a divergence-free field $\vec{b}$ with vanishing normal component on the boundary, i.e., $\vec{b}\cdot\normal = 0$.
Assume $u \in L^\infty(0,T; H^{k+1})$, $\partial_t{u} \in L^2(0,T; H^{k+1})$, and $\partial_{tt} u \in L^2(0,T; L^2)$. For $k\geq 1$, prescribe the condition $\tau \geq ch^{2(k+1)/3}$. Then there exists $C > 0$ independent of $h$ and $\tau$, such that
\begin{align*}
\max_{0 \leq n \leq \Nst} \norm{U_h^n - u^n}{L^2} \leq C(\tau + h^{k+1}).
\end{align*}
\end{theorem}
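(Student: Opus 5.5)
The plan is to follow the proof of Theorem~\ref{thm:error_bound_heat} step by step. The only new ingredient is the advection form $a_{\mathrm{adv}}$, which will not be orthogonal to $\zeta^{n+1}$ under the elliptic projection $\Pi_h$ (defined through $a_\mathcal{D}$ only). We therefore keep $\Pi_h$ as defined, so that Theorem~\ref{thm:Theta_bound} applies verbatim, and treat $a_{\mathrm{adv}}(\zeta^{n+1},\theta_h^{n+1})$ as an additional consistency term.

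\textbf{Error equation and energy inequality.} First I note that the SIPG/upwind discretization is consistent for the smooth exact solution with the no-flux condition:
\[
((\partial_t u)^{n+1},v_h)+D a_\mathcal{D}(u^{n+1},v_h)+a_{\mathrm{adv}}(u^{n+1},v_h)=0 \quad \forall v_h\in V_h .
\]
This holds because $u^{n+1}$ has no jumps, the upwind trace equals the trace, and the boundary terms cancel since $\vec b\cdot\normal=0$. Subtracting \eqref{eq:implicit_euler_conv_diff} and using $a_\mathcal{D}(\zeta^{n+1},v_h)=0$ for $v_h\in V_h^0$ gives the analogue of \eqref{eq:theta_eq_heat}, with $\tau a_{\mathrm{adv}}(\theta_h^{n+1},v_h)$ on the left and $\tau a_{\mathrm{adv}}(\zeta^{n+1},v_h)$ added on the right. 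Mass conservation (Theorem~\ref{thm:mass_conservation}) gives $\theta_h^{n+1}\in V_h^0$, so it is an admissible test function. Lemma~\ref{lem:adv_pos} makes $a_{\mathrm{adv}}(\theta_h^{n+1},\theta_h^{n+1})$ nonnegative; I keep it as the jump seminorm $\frac12\sum_e\int_e|\vec b\cdot\normal_e|\jump{\theta_h^{n+1}}^2$. With coercivity \eqref{eq:coercivity} multiplied by $D$, this yields the analogue of \eqref{eq:energy_ineq_heat}. Theorem~\ref{thm:Theta_bound} then replaces $\norm{\Theta_h^n}{L^2}^2$ by $\norm{\theta_h^n}{L^2}^2+\norm{\zeta^n}{L^2}^2$, exactly as before.

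\textbf{The advection consistency term (main obstacle).} Write
\[
a_{\mathrm{adv}}(\zeta,\theta)=-\sum_E\int_E\zeta\,\vec b\cdot\grad\theta+\sum_e\int_e\upwind{\zeta}(\vec b\cdot\normal_e)\jump{\theta}.
\]
\begin{itemize}[label={}]
\item For the volume term, Cauchy--Schwarz gives a bound by $\norm{\vec b}{L^\infty}\norm{\zeta}{L^2}\norm{\theta}{\mathrm{DG}}$.
\item For the face term, I use the trace inequality $\norm{\zeta}{L^2(e)}\le C(h^{-1/2}\norm{\zeta}{L^2(E)}+h^{1/2}\norm{\grad\zeta}{L^2(E)})$ together with \eqref{eq:elliptic_proj_approx}. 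This gives $\norm{\zeta}{L^2(\Gammah)}\le Ch^{k+1/2}$. Pairing it with $(h/\sigma)^{1/2}\cdot(\sigma/h)^{1/2}\norm{\jump{\theta}}{L^2(e)}$ bounds the face term by $Ch^{k+1}\norm{\theta}{\mathrm{DG}}$.
\end{itemize}
Note that the required broken $H^1$ bound on $\zeta$ follows from the DG-norm part of \eqref{eq:elliptic_proj_approx}. Young's inequality then gives
\[
\tau|a_{\mathrm{adv}}(\zeta^{n+1},\theta_h^{n+1})|\le \tfrac{D\kappa\tau}{4}\norm{\theta_h^{n+1}}{\mathrm{DG}}^2+C\tau h^{2k+2}.
\]
The constant depends on $\norm{\vec b}{W^{1,\infty}}$ and $1/(D\kappa)$, and summing over $n$ contributes only $CTh^{2k+2}$. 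If the face term turns out to be delicate, the fallback is to absorb the jump part into the upwind seminorm retained from Lemma~\ref{lem:adv_pos} instead of the penalty. The remaining terms are handled exactly as in \eqref{eq:heat_bound_rhs1}--\eqref{eq:heat_bound_rhs2}, with $\kappa$ replaced by $D\kappa$.

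\textbf{Summation and conclusion.} After summing, the only $\tau$-singular contribution is again $\frac12\sum_n\norm{\zeta^n}{L^2}^2\le CT\tau^{-1}h^{2k+2}$. Under $\tau\ge ch^{2(k+1)/3}$ this is at most $C\tau^2$, which gives $\norm{\theta_h^N}{L^2}\le C(\tau+h^{k+1})$. Finally, Theorem~\ref{thm:limiter_accuracy} (equivalently Theorem~\ref{thm:P_nonexpansive} with $\mathcal{P}_hu^n=u^n$) gives $\norm{u^n-U_h^n}{L^2}\le\norm{\theta_h^n}{L^2}+\norm{\zeta^n}{L^2}$. Combining this with \eqref{eq:elliptic_proj_approx} completes the estimate.
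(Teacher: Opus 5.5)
Your proposal is correct and follows essentially the same route as the paper. Both arguments treat $\tau a_{\mathrm{adv}}(\zeta^{n+1},\theta_h^{n+1})$ as an extra consistency term, bounding the volume part by Cauchy--Schwarz and the face part via the trace inequality by $Ch^{k+1}\norm{\theta_h^{n+1}}{\mathrm{DG}}$; both then pass through the limiter with Theorem~\ref{thm:Theta_bound} and use the reverse CFL condition to absorb $\tau^{-1}h^{2k+2}$. Retaining the upwind jump seminorm and the fallback you mention are unnecessary, and only $\norm{\vec{b}}{L^\infty}$, not $\norm{\vec{b}}{W^{1,\infty}}$, enters the constant.
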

\begin{proof}
The DG discretization is consistent, in the sense that the exact solution $u$ satisfies for $n\geq0$,
\begin{align}\label{eq:conv_diff_consistent}
((\partial_t u)^{n+1}, v_h) + D a_\mathcal{D}(u^{n+1}, v_h) + a_{\mathrm{adv}}(u^{n+1}, v_h) = 0, && \forall v_h\in V_h.
\end{align}
Proceeding analogously to the proof of Theorem~\ref{thm:error_bound_heat}, we subtract \eqref{eq:implicit_euler_conv_diff} from \eqref{eq:conv_diff_consistent} and use the Taylor expansion of $(\partial_t u)^{n+1}$ with integral remainder. 
Recalling the definitions of the approximation error $\zeta^n$ and the discrete errors $\theta_h^n$ and $\Theta_h^n$, and using $a_\mathcal{D}(\zeta^{n+1},v_h)=0$, we have
\begin{multline}\label{eq:theta_eq_conv_diff}
(\theta_h^{n+1} - \Theta_h^n, v_h) + \tau D a_\mathcal{D}(\theta_h^{n+1}, v_h) + \tau a_{\mathrm{adv}}(\theta_h^{n+1}, v_h) \\
= (\zeta^{n+1} - \zeta^n, v_h) + \tau a_{\mathrm{adv}}(\zeta^{n+1}, v_h) + \int_{t^n}^{t^{n+1}} (t-t^n)(\partial_{tt} u, v_h),\quad \forall v_h\in V_h^0.
\end{multline}
Choosing $v_h = \theta_h^{n+1}\in V_h^0$ in \eqref{eq:theta_eq_conv_diff}, from the coercivity of $a_\mathcal{D}$, the positivity of $a_\mathrm{adv}$ in Lemma~\ref{lem:adv_pos}, and Theorem~\ref{thm:Theta_bound}, we have
\begin{multline}\label{eq:after_sub_conv_diff}
\frac{1}{2}\norm{\theta_h^{n+1}}{L^2}^2 + \tau D\kappa\norm{\theta_h^{n+1}}{\mathrm{DG}}^2
\leq \frac{1}{2}\norm{\theta_h^n}{L^2}^2 + \frac{1}{2}\norm{\zeta^n}{L^2}^2 \\
+ (\zeta^{n+1}-\zeta^n, \theta_h^{n+1}) + \tau a_{\mathrm{adv}}(\zeta^{n+1}, \theta_h^{n+1})
+ \int_{t^n}^{t^{n+1}} (t-t^n)(\partial_{tt}u, \theta_h^{n+1}).
\end{multline}
Applying Cauchy--Schwarz's, Poincar\'e, and Young's inequalities to the third term on the right-hand side, followed by using \eqref{eq:elliptic_proj_approx} and $\norm{u^{n+1}-u^n}{H^{k+1}} \leq \tau^{1/2}\norm{\partial_t u}{L^2(t^n,t^{n+1};H^{k+1})}$, we get
\begin{align}\label{eq:conv_diff_bound_rhs1}
\abs{(\zeta^{n+1}-\zeta^n, \theta_h^{n+1})} 
\leq \frac{\tau D\kappa}{4}\norm{\theta_h^{n+1}}{\mathrm{DG}}^2 + \frac{Ch^{2k+2}}{D\kappa} \norm{\partial_t u}{L^2(t^n,t^{n+1}; H^{k+1})}^2.
\end{align}
For the fourth term on the right-hand side of \eqref{eq:after_sub_conv_diff}, we bound the volume and face parts separately. The volume part gives
\begin{align*}
\Big|-\sum_{E\in\setE_h}\int_E \zeta^{n+1} \vec{b}\cdot\grad\theta_h^{n+1}\Big| 
\leq \norm{\vec{b}}{L^\infty}\norm{\zeta^{n+1}}{L^2}\norm{\theta_h^{n+1}}{\mathrm{DG}}
\leq Ch^{k+1}\norm{\theta_h^{n+1}}{\mathrm{DG}}.
\end{align*}
For the face part, extracting $\abs{\vec{b}\cdot\normal_e}\leq\norm{\vec{b}}{L^\infty}$, applying Cauchy--Schwarz's and trace inequalities, and recalling the definition of DG norm, we have
\begin{align*}
\Big|\sum_{e\in\Gammah}\int_e\upwind{(\zeta^{n+1})}(\vec{b}\cdot\normal_e)\jump{\theta_h^{n+1}}\Big|
&\leq \norm{\vec{b}}{L^\infty}\sum_{e\in\Gammah}\norm{\upwind{(\zeta^{n+1})}}{L^2(e)}\norm{\jump{\theta_h^{n+1}}}{L^2(e)} \\
&\leq \norm{\vec{b}}{L^\infty}\Big(\sum_e h\norm{\upwind{(\zeta^{n+1})}}{L^2(e)}^2\Big)^{1/2} \Big(\sum_e h^{-1}\norm{\jump{\theta_h^{n+1}}}{L^2(e)}^2\Big)^{1/2}\\
&\leq C h^{k+1} \norm{\theta_h^{n+1}}{\mathrm{DG}}.
\end{align*}
Therefore, combining the volume and face bounds above and applying Young's inequality, we obtain
\begin{align}\label{eq:conv_diff_bound_rhs2}
\tau\abs{a_{\mathrm{adv}}(\zeta^{n+1}, \theta_h^{n+1})}
\leq C\tau h^{k+1}\norm{\theta_h^{n+1}}{\mathrm{DG}}
\leq \frac{\tau D\kappa}{4}\norm{\theta_h^{n+1}}{\mathrm{DG}}^2 + \frac{C\tau h^{2k+2}}{D\kappa}.
\end{align}
For the last term in \eqref{eq:after_sub_conv_diff}, applying Cauchy--Schwarz's, Poincar\'e, and Young's inequalities again, we get
\begin{align}\label{eq:conv_diff_bound_rhs3}
\Big|\int_{t^n}^{t^{n+1}}(t-t^n)(\partial_{tt}u, \theta_h^{n+1})\Big| \leq \frac{\tau D\kappa}{4}\norm{\theta_h^{n+1}}{\mathrm{DG}}^2 + \frac{C \tau^2}{D\kappa}\norm{\partial_{tt}u}{L^2(t^n,t^{n+1}; L^2)}^2.
\end{align}
Substituting \eqref{eq:conv_diff_bound_rhs1}, \eqref{eq:conv_diff_bound_rhs2}, and \eqref{eq:conv_diff_bound_rhs3} into \eqref{eq:after_sub_conv_diff}, we have
\begin{multline}\label{eq:recursion}
\frac{1}{2}\norm{\theta_h^{n+1}}{L^2}^2 - \frac{1}{2}\norm{\theta_h^n}{L^2}^2 + \frac{\tau D\kappa}{4}\norm{\theta_h^{n+1}}{\mathrm{DG}}^2 \leq \frac{1}{2}\norm{\zeta^n}{L^2}^2 \\
+ \frac{C\tau h^{2k+2}}{D\kappa} + \frac{Ch^{2k+2}}{D\kappa}\norm{\partial_t u}{L^2(t^n,t^{n+1};H^{k+1})}^2 + \frac{C\tau^2}{D\kappa} \norm{\partial_{tt}u}{L^2(t^n,t^{n+1}; L^2)}^2.
\end{multline}
For any integer $1\leq N\leq \Nst$, summing \eqref{eq:recursion} over $n = 0, \cdots, N-1$, we obtain
\begin{multline*}
\frac{1}{2}\norm{\theta_h^N}{L^2}^2 + \frac{\tau D\kappa}{4}\sum_{n=0}^{N-1}\norm{\theta_h^{n+1}}{\mathrm{DG}}^2
\leq \frac{1}{2}\norm{\theta_h^0}{L^2}^2 + \frac{1}{2}\sum_{n=0}^{N-1}\norm{\zeta^n}{L^2}^2
+ \frac{CTh^{2k+2}}{D\kappa} \\
+ \frac{Ch^{2k+2}}{D\kappa}\norm{\partial_t u}{L^2(0,T;H^{k+1})}^2
+ \frac{C\tau^2}{D\kappa}\norm{\partial_{tt}u}{L^2(0,T;L^2)}^2.
\end{multline*}
As in the proof of Theorem~\ref{thm:error_bound_heat}, the second term on the right-hand side above satisfies \eqref{eq:heat_accumulated_approx_err}.
Since the condition $\tau \geq ch^{2(k+1)/3}$ implies $h^{2k+2}\leq C\tau^3$, using $\norm{\theta_h^0}{L^2}^2\leq Ch^{2k+2}$, for any integer $1\leq N\leq \Nst$, we obtain
\begin{align}\label{eq:L2_bound_theta_h}
\norm{\theta_h^N}{L^2}^2 + \frac{\tau D\kappa}{2}\sum_{n=0}^{N-1}\norm{\theta_h^{n+1}}{\mathrm{DG}}^2 \leq C(\tau^2 + h^{2k+2}).
\end{align}
By the error decomposition $u^n - U_h^n = \zeta^n - \Theta_h^n$, Theorem~\ref{thm:limiter_accuracy}, and the triangle inequality, we get $\norm{u^n - U_h^n}{L^2} \leq \norm{\theta_h^n}{L^2} + \norm{\zeta^n}{L^2}$.
From \eqref{eq:L2_bound_theta_h}, we know $\max_n\norm{\theta_h^n}{L^2} \leq C(\tau + h^{k+1})$. In addition, \eqref{eq:elliptic_proj_approx} gives
$\norm{\zeta^n}{L^2} \leq Ch^{k+1}\norm{u(t^n)}{H^{k+1}}$.
Combining these together, we conclude the proof.
\end{proof}
In addition to the conditional optimal convergence under a reverse CFL condition $\tau \geq ch^{2(k+1)/3}$, the scheme enjoys unconditional suboptimal convergence as follows.
\begin{theorem}\label{thm:error_bound_unconditional}
Given a divergence-free field $\vec{b}$ with vanishing normal component on the boundary. Assume $u \in L^\infty(0,T; H^{k+1})$, $\partial_t{u} \in L^2(0,T; L^2)$, and $\partial_{tt}{u} \in L^2(0,T; L^2)$. For $k\geq 1$, there exists $C > 0$ independent of $h$ and $\tau$, such that
\begin{align*}
\max_{0 \leq n \leq \Nst} \norm{U_h^n - u^n}{L^2} \leq C(\tau + h^k).
\end{align*}
\end{theorem}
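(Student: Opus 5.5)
The plan is to redo the energy argument of Theorems~\ref{thm:error_bound_heat} and~\ref{thm:error_bound_conv_diff}, but with the $L^2$ projection $\pi_h$ as reference instead of the elliptic projection $\Pi_h$. The point is that the only source of the reverse CFL condition was the term $\frac12\norm{\zeta^n}{L^2}^2$ charged once per step by Theorem~\ref{thm:Theta_bound}. Summed over $T/\tau$ steps it produced $h^{2k+2}/\tau$. With $\pi_h$ this charge disappears, because $\pi_h$ preserves cell averages.

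Set $\rho^n = u^n - \pi_h u^n$, $\theta_h^n = u_h^n - \pi_h u^n$ and $\Theta_h^n = U_h^n - \pi_h u^n$, so that $u^n - U_h^n = \rho^n - \Theta_h^n$.

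\textbf{Step 1 (limiter costs nothing).} Since $\overline{\pi_h u^n} = \overline{u^n} \in \Lambda$ by Theorem~\ref{thm:mass_conservation}, we have $\mathcal{Q}\overline{\pi_h u^n} = \overline{\pi_h u^n}$, hence $\mathcal{P}_h \pi_h u^n = \pi_h u^n$. Theorem~\ref{thm:P_nonexpansive} then gives
\[
\norm{\Theta_h^n}{L^2} = \norm{\mathcal{P}_h u_h^n - \mathcal{P}_h \pi_h u^n}{L^2} \leq \norm{\theta_h^n}{L^2},
\]
with no additive approximation term.

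\textbf{Step 2 (error equation).} Subtract the scheme \eqref{eq:implicit_euler_conv_diff} from the consistency relation \eqref{eq:conv_diff_consistent}; the heat equation is the case $D=1$, $\vec b=\vec 0$. For $v_h\in V_h$ we obtain
\begin{multline*}
(\theta_h^{n+1}-\Theta_h^n, v_h) + \tau D a_\mathcal{D}(\theta_h^{n+1},v_h) + \tau a_{\mathrm{adv}}(\theta_h^{n+1},v_h) \\
= (\rho^{n+1}-\rho^n, v_h) + \tau D a_\mathcal{D}(\rho^{n+1},v_h) + \tau a_{\mathrm{adv}}(\rho^{n+1},v_h) + \int_{t^n}^{t^{n+1}}(t-t^n)(\partial_{tt}u, v_h).
\end{multline*}
Since $\pi_h$ is the $L^2$ projection onto $V_h$, the term $(\rho^{n+1}-\rho^n,v_h)$ vanishes. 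This is why only $\partial_t u\in L^2(0,T;L^2)$ is needed, if at all. We take $v_h=\theta_h^{n+1}$, which lies in $V_h^0$ because $u_h^{n+1}$ and $\pi_h u^{n+1}$ both have mass $(u^0,1)$. We then use coercivity \eqref{eq:coercivity}, Lemma~\ref{lem:adv_pos}, and Step~1 to replace $\norm{\Theta_h^n}{L^2}^2$ by $\norm{\theta_h^n}{L^2}^2$.

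\textbf{Step 3 (the main obstacle: bounding $a_\mathcal{D}(\rho^{n+1},\theta_h^{n+1})$).} Since $\rho$ is not $a_\mathcal{D}$-orthogonal to $V_h$, we need a bound of the form
\[
|a_\mathcal{D}(\rho,v_h)| \leq C h^k \norm{u}{H^{k+1}}\norm{v_h}{\mathrm{DG}}.
\]
The volume term follows from \eqref{eq:L2_proj_approx} with $s=1$. For the face terms, $\int_e\avg{\grad\rho\cdot\normal_e}\jump{v_h}$ is handled by the continuous trace inequality applied to $\grad\rho$, giving $h^{-1/2}\cdot h^{k}$ times $h^{1/2}\norm{\jump{v_h}}{L^2(e)}/h^{1/2}$. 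The term $\int_e \avg{\grad v_h\cdot\normal_e}\jump{\rho}$ uses the discrete inverse trace inequality $\norm{\grad v_h}{L^2(e)}\leq Ch^{-1/2}\norm{\grad v_h}{L^2(E)}$ together with $\norm{\rho}{L^2(e)}\leq Ch^{k+1/2}$. The penalty term is bounded by $\frac{\sigma}{h}\norm{\jump{\rho}}{L^2(e)}\norm{\jump{v_h}}{L^2(e)}$. All of these give $h^k$; this loss of one power relative to the elliptic projection is exactly the suboptimality. The advection term $a_{\mathrm{adv}}(\rho,\theta_h^{n+1})$ is bounded by $Ch^{k+1/2}\norm{\theta_h^{n+1}}{\mathrm{DG}}$ (or better), just as in the proof of Theorem~\ref{thm:error_bound_conv_diff}. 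The time-truncation term is handled as in \eqref{eq:conv_diff_bound_rhs3}.

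\textbf{Step 4 (closing).} Young's inequality absorbs all $\norm{\theta_h^{n+1}}{\mathrm{DG}}$ factors into $\frac{\tau D\kappa}{2}\norm{\theta_h^{n+1}}{\mathrm{DG}}^2$, leaving per-step contributions of size $C\tau h^{2k}$ and $C\tau^2\norm{\partial_{tt}u}{L^2(t^n,t^{n+1};L^2)}^2$. Summing over $n$ gives
\[
\norm{\theta_h^N}{L^2}^2 \leq \norm{\theta_h^0}{L^2}^2 + CTh^{2k} + C\tau^2 ,
\]
where $\theta_h^0=0$ since $U_h^0=\pi_h u^0$. No factor $1/\tau$ appears, so no step-size restriction is needed. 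Finally,
\[
\norm{u^n-U_h^n}{L^2}\leq\norm{\rho^n}{L^2}+\norm{\Theta_h^n}{L^2}\leq Ch^{k+1}+\norm{\theta_h^n}{L^2}\leq C(\tau+h^k).
\]
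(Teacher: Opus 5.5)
Your proposal is correct and follows the paper's proof in Appendix~\ref{app:unconditional_subopt} essentially step for step. It uses the same $L^2$-projection error splitting, the same identity $\mathcal{P}_h\pi_h u^n = \pi_h u^n$ combined with Theorem~\ref{thm:P_nonexpansive} to remove the per-step limiter charge, the vanishing of $(\rho^{n+1}-\rho^n,\theta_h^{n+1})$, and the $\mathcal{O}(h^k)$ bound on $a_\mathcal{D}(\rho^{n+1},\theta_h^{n+1})$, which the paper packages as Lemma~\ref{lem:aD_bound} via the augmented norm $\norm{\cdot}{\ast}$. Your advection estimate $Ch^{k+1/2}\norm{\theta_h^{n+1}}{\mathrm{DG}}$ is weaker than the paper's $Ch^{k+1}$ from the $h^{1/2}$-weighted face bound in \eqref{eq:conv_diff_bound_rhs2}, but it is more than enough here.
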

\begin{proof}
The proof parallels that of Theorem~\ref{thm:error_bound_conv_diff}, with elliptic projection replaced by $L^2$ projection, see Appendix~\ref{app:unconditional_subopt}.
\end{proof}

\section{Numerical experiments}\label{sec:numerical_experiments}
In this section, we verify that our scheme preserves both conservation and bounds with high-order accuracy in space. We briefly review existing splitting methods and present an efficient algorithm for the cell average limiter. Convergence rate studies for the heat equation and the convection-diffusion equation are presented using manufactured solutions that trigger the limiter. A solid body rotation test is then examined to validate the conservative bound-preserving property. 
\par 
The penalty parameters for all tests are as follows: we use SIPG with penalty parameters $\sigma = 4$ in $\IQ_2$ space and $\sigma = 8$ in $\IQ_3$ space. 
We set the convergence tolerance for the iterative optimization solver to $\epsilon = 10^{-10}$. 
In all tests, the Zhang--Shu scaling limiter is applied after the optimization-based cell average limiter to enforce the bounds at the quadrature points. It modifies only the high-order moments and thus preserves the cell averages and global conservation.
\begin{remark}
The accuracy of the Zhang--Shu scaling limiter, provided the cell averages are admissible, follows from \cite{zhang2017positivity,xu2017bound}. The present analysis in Section~\ref{sec:error_analysis} concerns repeated application of the conservative cell average correction. 
It does not establish an error estimate for the additional Zhang--Shu scaling applied at every time step, which is also beyond the scope of this paper.
\end{remark}

\subsection{Efficient splitting solver}
An efficient optimization solver is essential for the cell average limiter to be practical, because the problem \eqref{eq:opt_model1} may need to be solved at every time step. Beyond efficiency, the solver should also be easy to implement and highly amenable to parallelization. In this subsection, we briefly review existing iterative solvers, then present an efficient splitting algorithm.
\par
Splitting methods trace back to the 1950s, when Peaceman and Rachford~\cite{peaceman1955numerical} as well as Douglas and Rachford~\cite{douglas1956numerical} introduced schemes for the heat equation.
Lions and Mercier~\cite{lions1979splitting} extended these schemes to the sum of two maximal monotone operators, giving the generalized Douglas--Rachford (DR) splitting method. 
The DR method is closely related to several other popular first-order methods, including the alternating direction method of multipliers~\cite{fortin2000augmented}, the primal-dual hybrid gradient method~\cite{chambolle2016introduction}, and the split Bregman method~\cite{goldstein2009split}.  We refer to~\cite{demanet2016eventual,lai2026ubiquitous} and the references therein for the equivalences among these schemes.
\par
The DR method can be integrated with forward-backward splitting~\cite{raguet2019note,raguet2013generalized}, an approach that was subsequently extended in~\cite{briceno2015forward}. The convergence of this method was established by Davis and Yin~\cite{davis2017three}, and it is commonly referred to as the Davis--Yin (DY) splitting.
For $\min_x d_1(x) + d_2(x) + d_3(x)$ with $\grad{d_2}$ Lipschitz continuous, the DY iteration reads:
\begin{align*}
\text{(DY)}~
\begin{cases}
x^{k+1/2} &\hspace{-0.75em}= \prox_{d_3}^\gamma(z^k),\\
x^{k+1} &\hspace{-0.75em}= \prox_{d_1}^\gamma(2x^{k+1/2} - z^{k} - \gamma\grad{d_2}(x^{k+1/2})),\\
z^{k+1} &\hspace{-0.75em}= z^{k} + x^{k+1} - x^{k+1/2}.
\end{cases}
\end{align*}
Let $\vec{x}$ denote a vector that stores the cell averages of the piecewise constant function $x_h$ on mesh $\setE_h$. 
We apply the change of variables $[\vec{X}]_i = X_i = \sqrt{\abs{E_i}}x_i$ and $[\vec{W}]_i = W_i = \sqrt{\abs{E_i}}\on{\overline{u_h}}{E_i}$. Define the matrix $\vecc{A} = [\sqrt{\abs{E_1}}, \cdots, \sqrt{\abs{E_\Nel}}]\in\IR^{1\times\Nel}$ and the scalar $b = \vecc{A}\vec{W}$. Then, the problem \eqref{eq:opt_model1} is equivalent to
\begin{align}\label{eq:optimization_model_vec_form}
\min_{\vec{X}\in\IR^{\Nel}} \norm{\vec{X}-\vec{W}}{2}^2 + \iota_{\Lambda_1}(\vec{X}) + \iota_{\Lambda_2}(\vec{X}),
\end{align}
where the sets $\Lambda_1$ and $\Lambda_2$ are reused to denote their vector-space counterparts, as follows:
\begin{align*}
\Lambda_1 = \{\vec{X}\in\IR^\Nel: \vecc{A}\vec{X} = b\}
\quad\text{and}\quad
\Lambda_2 = \{\vec{X}\in\IR^\Nel: \sqrt{\abs{E_i}}\,m \leq X_i \leq \sqrt{\abs{E_i}}\,M,~\forall i\}.
\end{align*}
Once the minimizer $\vec{X}^\ast$ of \eqref{eq:optimization_model_vec_form} is obtained, the limited cell averages are recovered by $x_i^\ast = X_i^\ast/\sqrt{\abs{E_i}}$.
Choosing $d_1(\vec{X}) = \iota_{\Lambda_1}(\vec{X})$, $d_2(\vec{X}) = \norm{\vec{X}-\vec{W}}{2}^2$, and $d_3(\vec{X}) = \iota_{\Lambda_2}(\vec{X})$; and taking $\gamma = 1/2$ (since $\grad{d_2}$ is 2-Lipschitz), the DY iteration specializes to
\begin{subequations}\label{eq:DY_nonuniform}
\begin{align}
[\vec{X}^{k+1/2}]_i &= \min(\max([\vec{Z}^k]_i, \sqrt{\abs{E_i}}m), \sqrt{\abs{E_i}}M),\quad\text{for}~i = 1,\cdots,\Nel, \\
\vec{Y}^{k+1} &= \vec{X}^{k+1/2} - \vec{Z}^{k} + \vec{W}, \\
\vec{X}^{k+1} &= \vecc{A}^{+}(b - \vecc{A}\vec{Y}^{k+1}) + \vec{Y}^{k+1}, \\
\vec{Z}^{k+1} &= \vec{Z}^{k} + \vec{X}^{k+1} - \vec{X}^{k+1/2}.
\end{align}
\end{subequations}
Here, the pseudoinverse $\vecc{A}^+ = \transpose{\vecc{A}}(\vecc{A}\transpose{\vecc{A}})^{-1} = \frac{1}{\abs{\Omega}}\transpose{\vecc{A}}$.
\par
The algorithm \eqref{eq:DY_nonuniform} is straightforward to implement and parallel-friendly, requiring only vectorizable operations (cut-off, scaling, and addition) and a single vector inner product. Through its KKT conditions, \eqref{eq:opt_model1} also admits an exact finite solution by a one-dimensional breakpoint search, at the cost of a sort or selection over elements, which is harder to implement on distributed-memory systems. We therefore favor \eqref{eq:DY_nonuniform}. On a uniform mesh, this iteration recovers the formula of our prior work \cite{liu2025efficient}.

\subsection{Convergence study for heat equation}
We solve the heat equation $u_t - \laplace{u} = f$ on domain $\Omega=(0,1)^2$ to final time $T = 1$ with homogeneous Neumann boundary conditions. The source term is computed from the following manufactured solution:
\begin{align}\label{eq:heat_prescribe_sol}
u(x,y,t) = \frac{1}{2}\Big(1 + \cos(3\pi x) + \nu (2t-1)\sin^2(3\pi x)\cos(4\pi y)\Big), \quad \text{where}~\nu = 0.45.
\end{align}
The above prescribed solution satisfies $u\in [0, 1]$, attaining $u = 0$ at $x = \frac{1}{3}$ and $u = 1$ at $x = 0$ for every $t\in [0,1]$. It has a constant spatial mean $\frac{1}{2}$. 
Hence, the admissible set is $[m,M] = [0,1]$ and the total mass is conserved, i.e., both the conservation constraint $\Lambda_1$ and the bound constraint $\Lambda_2$ are exercised. 
The computation is performed on a sequence of uniformly refined meshes, halving $h$ and $\tau$ together. Note that since the coefficient $2t-1$ in \eqref{eq:heat_prescribe_sol} is linear in time, the implicit Euler scheme commits no temporal error here. 
\par
Table~\ref{tab:heat_Q2} and Table~\ref{tab:heat_Q3} compare results with and without applying the optimization-based cell average limiter. 
For simulations in $\IQ_2$ and $\IQ_3$ spaces, we report the range of the discrete solution over all time steps, together with the largest excursion $\delta = \max\{\max u_h-M, m-\min u_h, 0\}$ outside the admissible set. 
Without the limiter, the discrete solution overshoots $M = 1$ and undershoots $m = 0$ by up to $2.678\times 10^{-3}$ on the coarsest mesh, and $\delta > 0$ at every resolution. 
With the limiter, $U_h\in [0,1]$ in every test. The optimal convergence rates are observed, which confirms that the limiter preserves accuracy.
\begin{table}[ht!]
\centering
\begin{tabular}{cccc ccc cc}
\toprule
 & & & &\multicolumn{3}{c}{without limiter} & \multicolumn{2}{c}{with limiter}\\
\cmidrule(lr){5-7}\cmidrule(lr){8-9}
$h$ & $\tau$ & {\scriptsize$\norm{U_h^{\Nst} \!\!-\! u(T)}{L_h^2}$} & rate
 & $\min u_h$ & $\max u_h$ & $\delta$
 & $\min U_h$ & $\max U_h$\\
\midrule
$2^{-3}$ & $2^0\cdot10^{-2}$ & $5.134\cdot10^{-3}$ & ---     & $-0.002678$ & $1.002678$ & $2.678\cdot10^{-3}$ & $0.000000$ & $1.000000$\\
$2^{-4}$ & $2^{-1}\cdot10^{-2}$ & $6.285\cdot10^{-4}$ & $3.030$ & $-0.000123$ & $1.000123$ & $1.229\cdot10^{-4}$ & $0.000000$ & $1.000000$\\
$2^{-5}$ & $2^{-2}\cdot10^{-2}$ & $7.824\cdot10^{-5}$ & $3.006$ & $-0.000007$ & $1.000007$ & $6.916\cdot10^{-6}$ & $0.000000$ & $1.000000$\\
$2^{-6}$ & $2^{-3}\cdot10^{-2}$ & $9.771\cdot10^{-6}$ & $3.001$ & $-0.000000$ & $1.000000$ & $4.202\cdot10^{-7}$ & $0.000000$ & $1.000000$\\
\bottomrule
\end{tabular}
\caption{Heat equation. Convergence and bound preservation without and with the cell average limiter in $\IQ_2$ space.}
\label{tab:heat_Q2}
\end{table}

\begin{table}[ht!]
\centering
\begin{tabular}{cccc ccc cc}
\toprule
 & & & &\multicolumn{3}{c}{without limiter} & \multicolumn{2}{c}{with limiter}\\
\cmidrule(lr){5-7}\cmidrule(lr){8-9}
$h$ & $\tau$ & {\scriptsize$\norm{U_h^{\Nst} \!\!-\! u(T)}{L_h^2}$} & rate
 & $\min u_h$ & $\max u_h$ & $\delta$
 & $\min U_h$ & $\max U_h$\\
\midrule
$2^{-3}$ & $2^0\cdot10^{-2}$ & $5.650\cdot10^{-4}$ & ---     & $-0.000031$ & $1.000031$ & $3.133\cdot10^{-5}$ & $0.000000$ & $1.000000$\\
$2^{-4}$ & $2^{-1}\cdot10^{-2}$ & $3.889\cdot10^{-5}$ & $3.861$ & $-0.000001$ & $1.000001$ & $7.689\cdot10^{-7}$ & $0.000000$ & $1.000000$\\
$2^{-5}$ & $2^{-2}\cdot10^{-2}$ & $2.507\cdot10^{-6}$ & $3.955$ & $-0.000000$ & $1.000000$ & $1.363\cdot10^{-8}$ & $0.000000$ & $1.000000$\\
$2^{-6}$ & $2^{-3}\cdot10^{-2}$ & $1.580\cdot10^{-7}$ & $3.988$ & $-0.000000$ & $1.000000$ & $2.244\!\cdot\!10^{-10}$ & $0.000000$ & $1.000000$\\
\bottomrule
\end{tabular}
\caption{Heat equation. Convergence and bound preservation without and with the cell average limiter in $\IQ_3$ space.}
\label{tab:heat_Q3}
\end{table}

\subsection{Rotating Gaussian hill}
A Gaussian hill rotating in a solenoidal velocity field serves as a widely used manufactured solution for convection-diffusion solvers. We utilize this classical benchmark for a convergence rate study.  
\par
The computational domain is $\Omega = [-2,2]^2$ with start time $t^0 = \pi/2$ and end time $T = 3\pi/2$. The divergence-free velocity field is $\vec{b} = \transpose{[-y,x]}$, and the diffusion coefficient is $D = 10^{-2}$. The initial and boundary conditions are prescribed by the following manufactured solution
\begin{align*}
u(x,y,t) = \frac{1}{4\pi D t} \exp\left(-\frac{(x+\frac{1}{2}\sin t)^2 + (y-\frac{1}{2}\cos t)^2}{4D t}\right).
\end{align*}
This solution describes a Gaussian hill that rotates counterclockwise while being gradually smeared by diffusion. The Gaussian decays exponentially away from its center, so on the finite domain $\Omega$ the no-flux condition \eqref{eq:conv_diff_BC} is satisfied up to an exponentially small residual. Figure~\ref{fig:Gaussian_hill} shows the initial and final time solutions on a $64\times 64$ mesh with $\IQ_3$ space discretization.
\par
We use uniform square meshes for both $\IQ_2$ and $\IQ_3$ spaces. For the $\IQ_2$ scheme, we use the tensor product of three-point Gauss--Legendre quadrature; for the $\IQ_3$ scheme, we use the tensor product of four-point Gauss--Legendre quadrature. 
The cell average limiter with $[m,M] = [0,+\infty)$ is triggered during the simulation. Table~\ref{tab:Gaussian_hill} shows the $L^2_h$ errors and convergence rates on a sequence of refined meshes. Optimal convergence rates, third order for the $\IQ_2$ scheme and fourth order for the $\IQ_3$ scheme, are obtained.
\begin{figure}[ht!]
\centering
\begin{tabularx}{0.95\linewidth}{@{}c@{~}c@{}}
\includegraphics[width=0.475\textwidth]{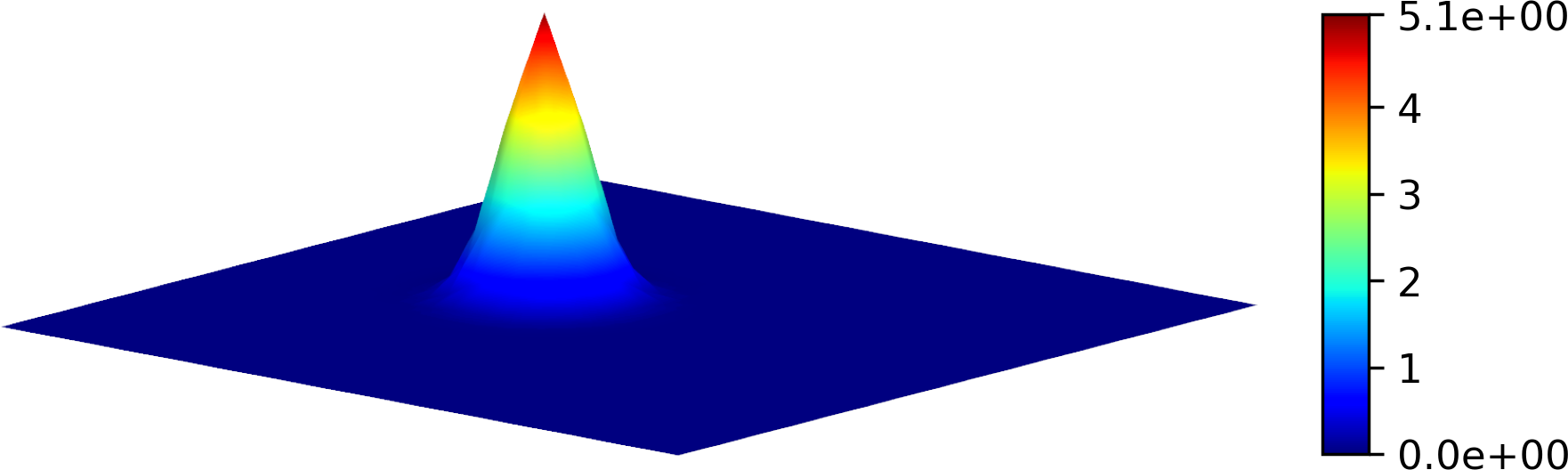} &
\includegraphics[width=0.475\textwidth]{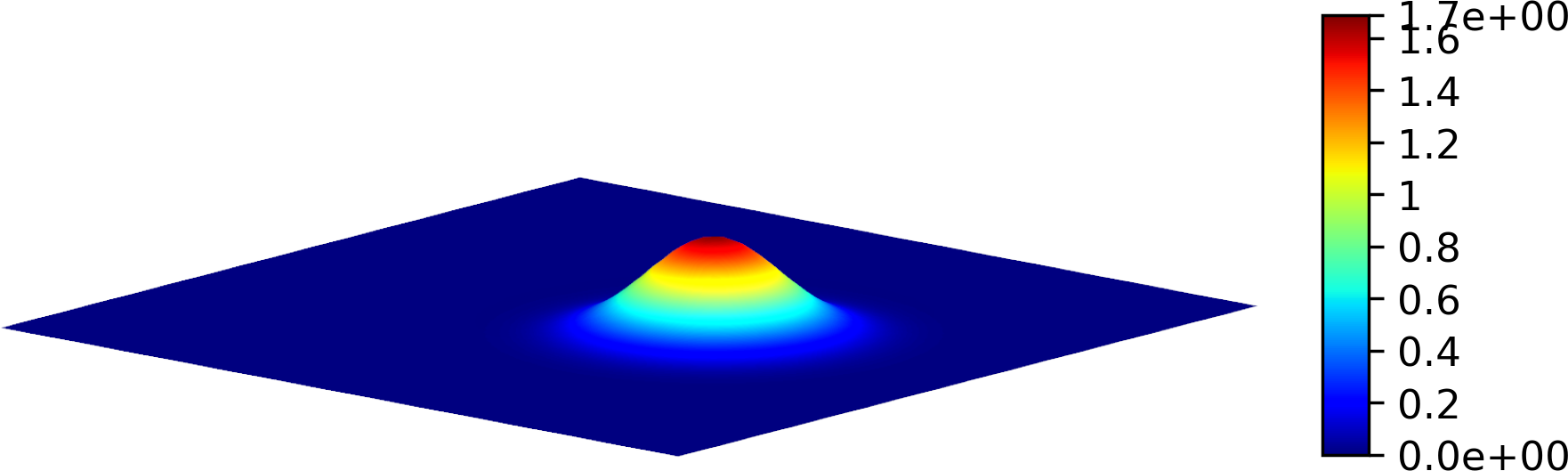} \\
(a) initial, $t=\pi/2$ & (b) final, $t=3\pi/2$
\end{tabularx}
\caption{Rotating Gaussian hill. Snapshots of the initial $t=\pi/2$ (left) and final time $t=3\pi/2$ (right) solutions. Produced by $\IQ_3$ scheme with mesh resolution $h = 1/16$. The solution is smeared by diffusion over the time evolution from $t=\pi/2$ to $t=3\pi/2$.
Note that the left and right panels use different color scales.}
\label{fig:Gaussian_hill}
\end{figure}
\begin{table}[ht!]
\centering
\begin{tabularx}{0.7\linewidth}{@{~~}c@{\hspace{1.0cm}}c@{\hspace{1.0cm}}c@{\hspace{0.75cm}}C@{}C@{}}
\toprule
$k$ & $h$ & $\tau$ & $\norm{U_h^{\Nst} - u(T)}{L^2_h}$ & rate \\
\midrule                   
2 & $2^{-2}$ & $2^{-1}\cdot\pi\cdot10^{-2}$ & $5.892\cdot10^{-2}$ & ---   \\
~ & $2^{-3}$ & $2^{-4}\cdot\pi\cdot10^{-2}$ & $7.929\cdot10^{-3}$ & 2.893 \\
~ & $2^{-4}$ & $2^{-7}\cdot\pi\cdot10^{-2}$ & $9.931\cdot10^{-4}$ & 2.997 \\
~ & $2^{-5}$ & $2^{-10}\cdot\pi\cdot10^{-2}$ & $1.163\cdot10^{-4}$ & 3.094 \\
\midrule                   
3 & $2^{-2}$ & $2^{-1}\cdot\pi\cdot10^{-2}$ & $5.695\cdot10^{-2}$ & ---   \\
~ & $2^{-3}$ & $2^{-5}\cdot\pi\cdot10^{-2}$ & $3.906\cdot10^{-3}$ & 3.866 \\
~ & $2^{-4}$ & $2^{-9}\cdot\pi\cdot10^{-2}$ & $2.457\cdot10^{-4}$ & 3.991 \\
~ & $2^{-5}$ & $2^{-13}\cdot\pi\cdot10^{-2}$ & $1.455\cdot10^{-5}$ & 4.077 \\
\bottomrule
\end{tabularx}
\caption{Rotating Gaussian hill. The $L^2_h$ errors and convergence rates for $\IQ_2$ and $\IQ_3$ schemes.  Here $\tau$ and $h$ are refined together with $\tau \sim h^{k+1}$ and the sequence satisfies the condition in Theorem~\ref{thm:error_bound_conv_diff} with $c = 0.03$. Such a constant always exists for a finite sequence of refinements. More importantly, even if $c$ is taken small, the estimate in Theorem~\ref{thm:error_bound_unconditional} is independent of $c$, so the error still tends to zero.}
\label{tab:Gaussian_hill}
\end{table}

\subsection{Solid body rotation}
The rotation of a slotted cylinder, a conical body, and a hump in a divergence-free velocity field is a widely used benchmark for convection-diffusion solvers. It assesses how well a scheme preserves complex solution structures over long-time integration.
\par
We choose the domain $\Omega = [0,1]^2$ with final time $T = 1$. The divergence-free velocity field is $\vec{b} = \transpose{[\pi(1-2y), \pi(2x-1)]}$, and the diffusion coefficient is $D = 10^{-4}$. 
To introduce the initial condition, let $B_{r_0}(x_0,y_0)$ denote a ball centered at $\vec{x}_0 = \transpose{[x_0,y_0]}$ of radius $r_0$, namely
\begin{align*}
B_{r_0}(x_0,y_0) = \{\vec{x}\in\Omega: (x-x_0)^2 + (y-y_0)^2 \leq r_0^2\}.
\end{align*}
Initially, the slotted cylinder is given by 
\begin{align*}
u^0(\vec{x}) = \begin{cases}
1, & \text{if}~\vec{x}\in B_{0.15}(0.5, 0.75) \setminus \{\vec{x}: \abs{x - 0.5} < 0.025~\text{and}~\abs{y} < 0.85\}, \\
0, & \text{if}~\vec{x}\in B_{0.15}(0.5, 0.75) \cap \{\vec{x}: \abs{x - 0.5} < 0.025~\text{and}~\abs{y} < 0.85\}.
\end{cases}
\end{align*}
The conical body is given by
\begin{align*}
u^0(\vec{x}) = 1 - \sqrt{(x - 0.5)^2 + (y - 0.25)^2}/0.15,\quad \text{if}~\vec{x}\in B_{0.15}(0.5, 0.25).
\end{align*}
The hump is given by
\begin{align*}
u^0(\vec{x}) = 0.25(1 + \cos(\pi\sqrt{(x - 0.25)^2 + (y - 0.5)^2}/0.15)),\quad \text{if}~\vec{x}\in B_{0.15}(0.25, 0.5).
\end{align*}
Elsewhere, $u^0(\vec{x}) = 0$.
In addition, we impose the no-flux boundary condition on $\partial\Omega$.
\par
We use uniform square meshes for both $\IQ_2$ and $\IQ_3$ spaces. In each case, the discrete solution is represented using Lagrangian basis functions, while the integrals are evaluated using Gaussian quadrature. The cell average limiter with $[m,M] = [0,1]$ is triggered during the simulation. 
\par
Figure~\ref{fig:rot_solid_body} shows the initial data and the solutions at the final time for three configurations: $\IQ_2$ with $h=1/160$ and $\tau=0.0002$, $\IQ_2$ with $h=1/200$ and $\tau=0.0001$, and $\IQ_3$ with $h=1/160$ and $\tau=0.0001$. In all cases, the solution remains within the bound $[0,1]$ throughout the simulation. The $\IQ_2$ solution on the coarser mesh exhibits noticeable numerical diffusion. A comparison of the $\IQ_2$ results on $h=1/160$ and $h=1/200$ shows that mesh refinement and smaller time steps reduce numerical diffusion and improve solution quality. The $\IQ_2$ solution on $h=1/200$ and the $\IQ_3$ solution on $h=1/160$ have roughly comparable numbers of degrees of freedom and are visually indistinguishable, indicating that the spatial resolution is sufficiently fine and the temporal error dominates. 
Further refining the mesh or increasing the polynomial order yields diminishing improvements. Because the implicit Euler method is first-order in time, reducing $\tau$ from $0.0002$ to $0.0001$ yields only a modest improvement.
\par
This test validates the bound-preserving property of our numerical scheme. The discrete solution remains in $[0,1]$ and conserves mass over the entire time evolution, consistent with our theoretical analysis.
\begin{figure}[ht!]
\centering
\begin{tabularx}{0.95\linewidth}{@{}c@{\quad}c@{\quad}c@{~}}
\includegraphics[width=0.4\textwidth]{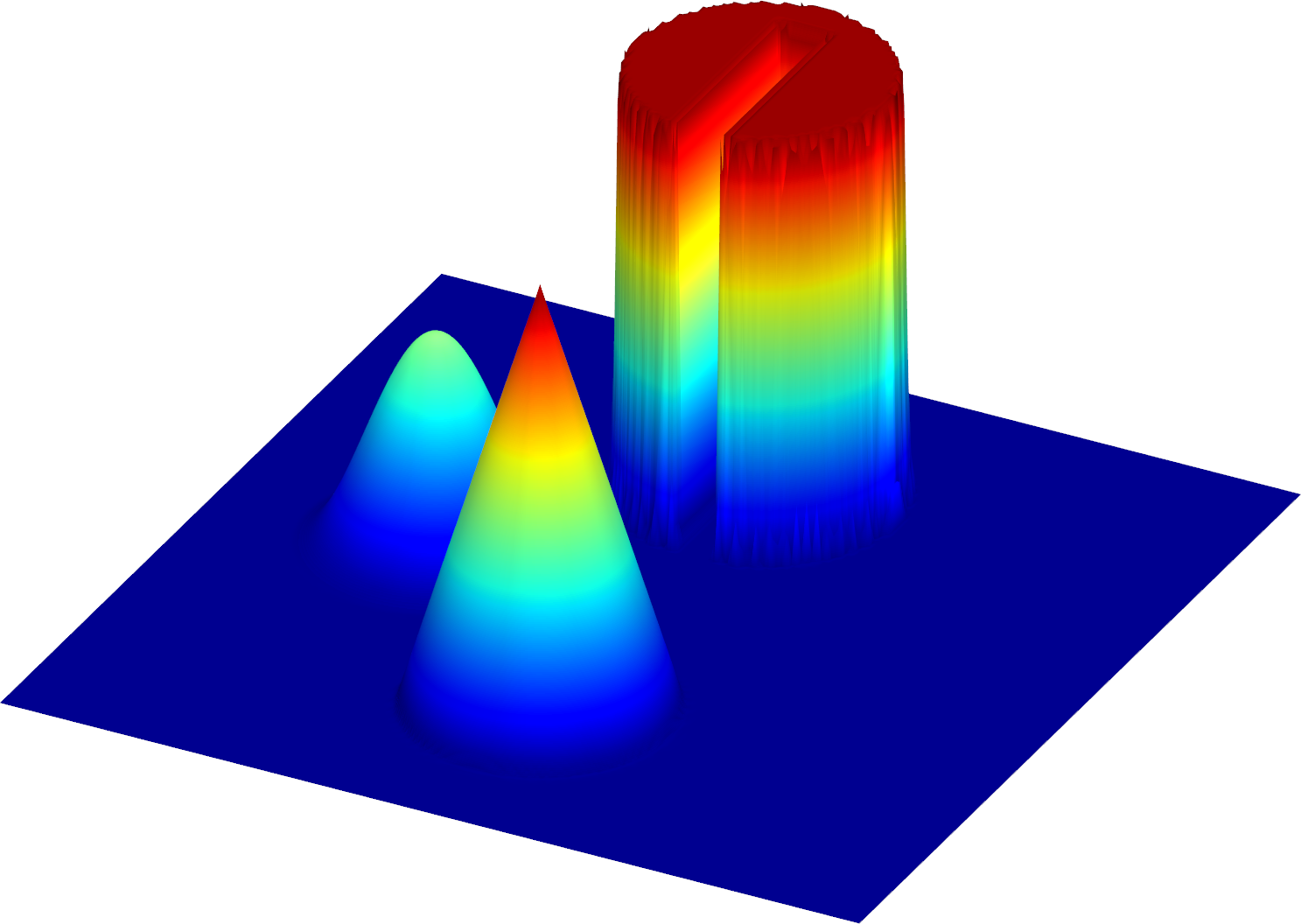} &
\includegraphics[width=0.4\textwidth]{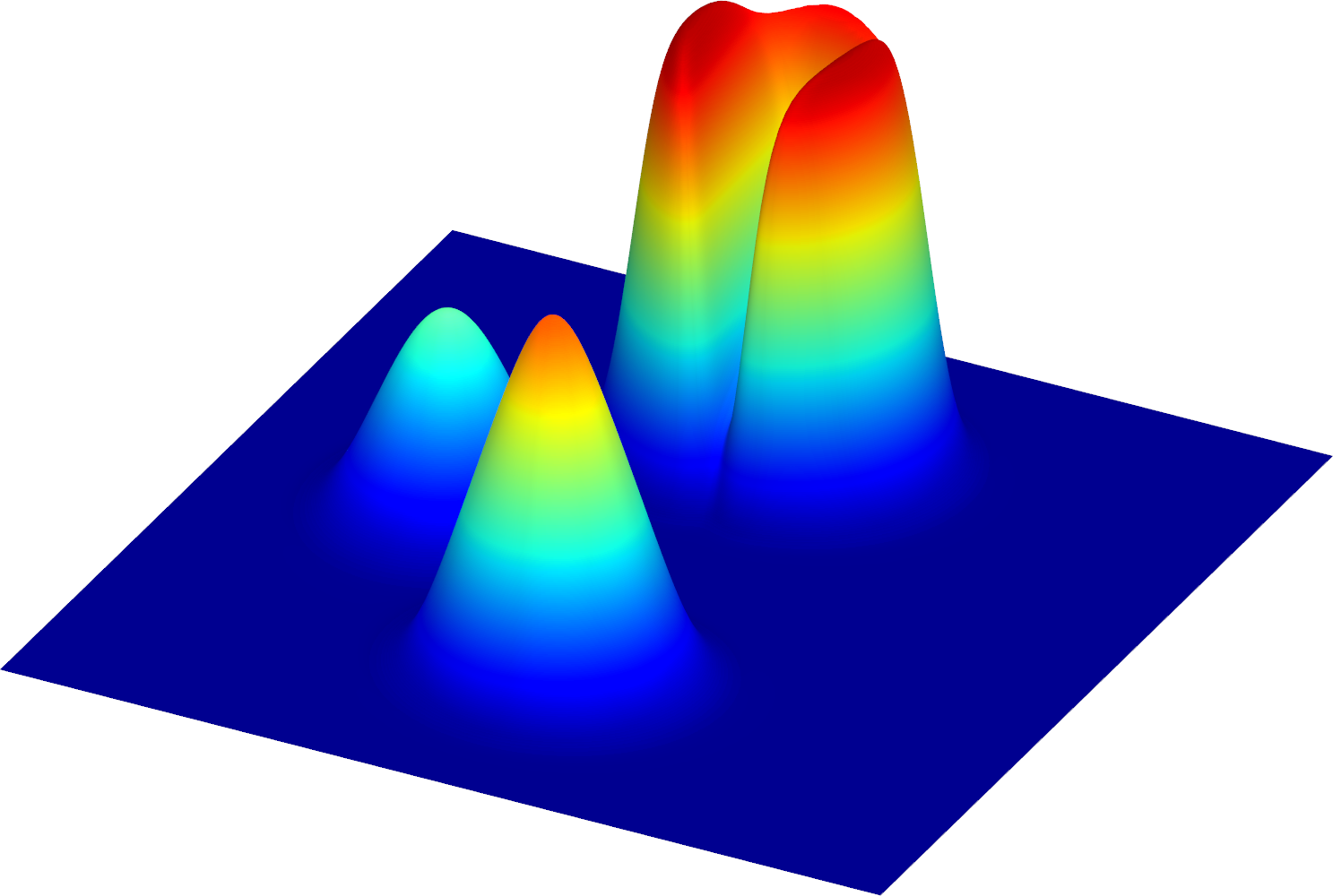} &
\includegraphics[width=0.07\textwidth]{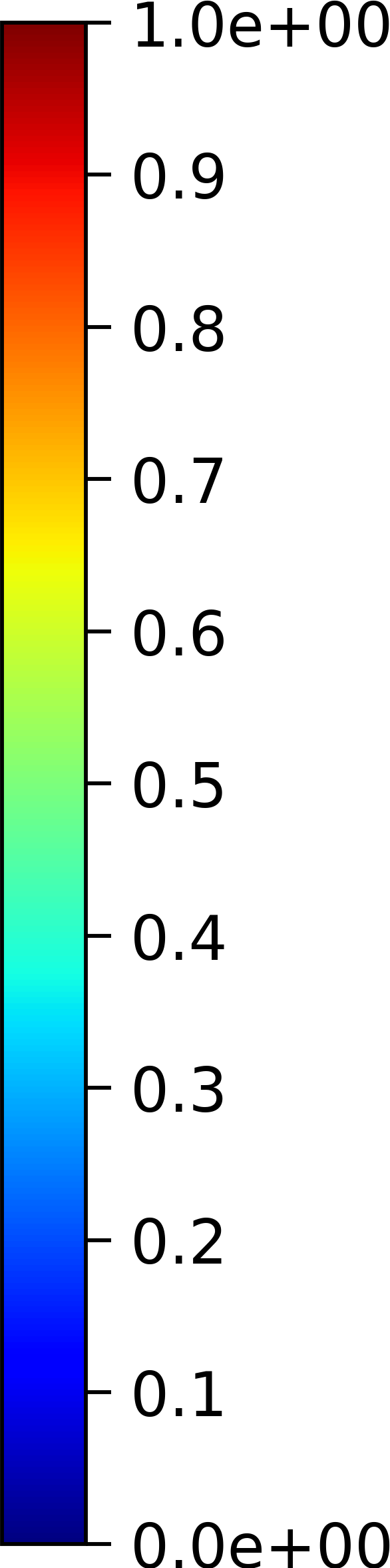} \\
initial & $\IQ_2$ scheme, $h=1/160$, $\tau=0.0002$ & ~ \\
\includegraphics[width=0.4\textwidth]{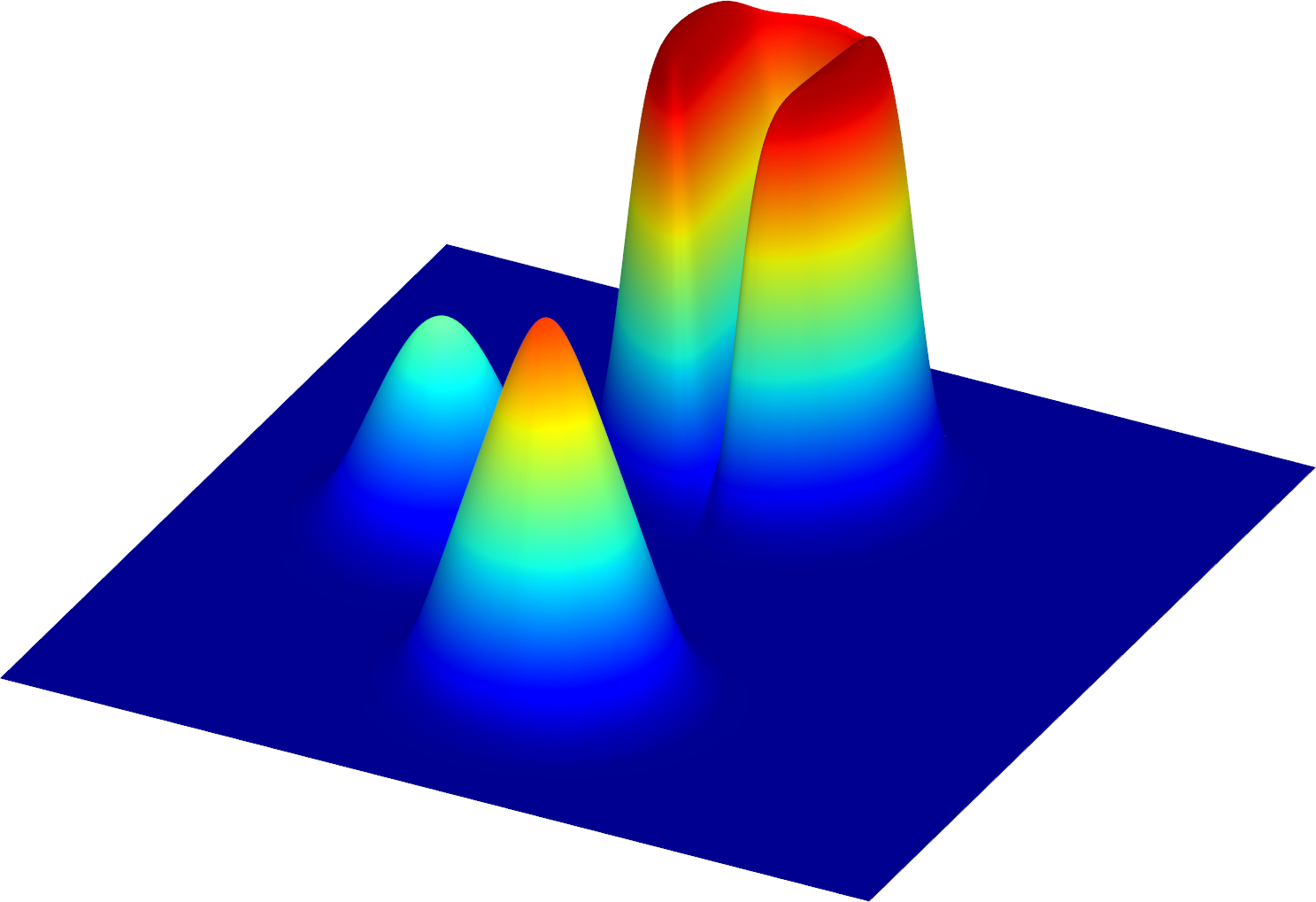} &
\includegraphics[width=0.4\textwidth]{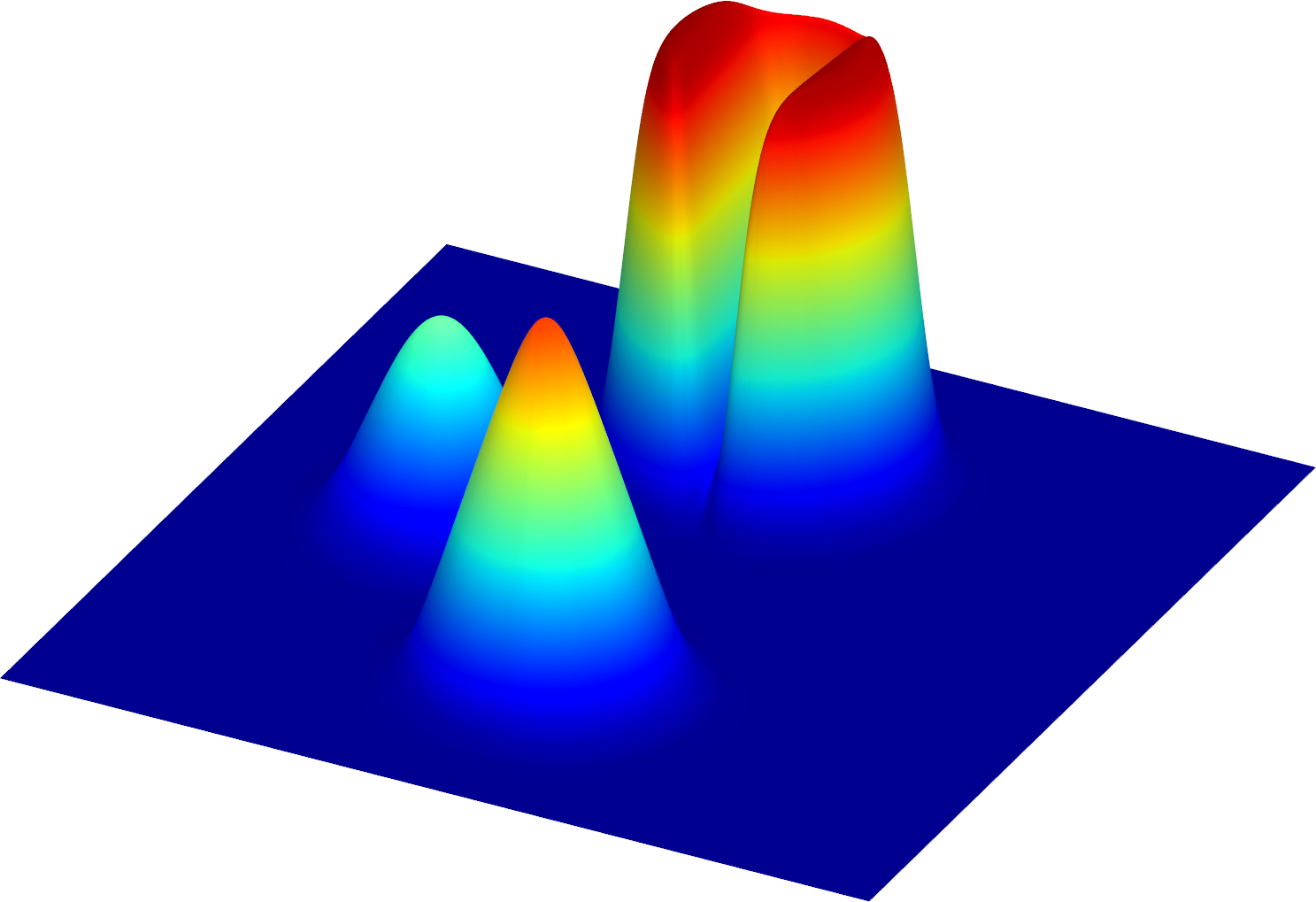} &
\includegraphics[width=0.075\textwidth]{colorbar.png} \\
$\IQ_2$ scheme, $h=1/200$, $\tau=0.0001$ & $\IQ_3$ scheme, $h=1/160$, $\tau=0.0001$ & ~ \\
\end{tabularx}
\caption{Rotating solid bodies. Snapshots of the initial condition and solutions at the final time $T = 1$.}
\label{fig:rot_solid_body}
\end{figure}

\section{Conclusions}\label{sec:conclusion}
In this paper, we proposed a constrained evolution framework that encodes conservation and admissibility as a differential inclusion. Its Lie splitting yields a two-stage scheme: an implicit Euler DG discretization of the underlying PDE, followed by an optimization-based cell average limiter realized as a backward Euler proximal step.
We proved that the limiter is nonexpansive and stable in the $L^2$ norm, and does not increase the total variation of the cell averages. For the heat and convection-diffusion equations, nonexpansiveness yields an $\mathcal{O}(\tau+h^{k+1})$ error estimate for the fully discrete scheme under a reverse CFL condition $\tau\geq ch^{2(k+1)/3}$, together with an unconditional $\mathcal{O}(\tau+h^{k})$ estimate.
Our analysis offers an explanation for why the optimization-based cell average limiter preserves accuracy so robustly in practice: it is the metric projection onto the convex admissible set, and is therefore nonexpansive. 
Numerical experiments on the heat equation and a rotating Gaussian hill confirm the optimal convergence rates while enforcing the prescribed bounds. The solid body rotation test further validates conservation and bound preservation for the full scheme.

\section*{Acknowledgments}
The authors thank Prof.~Xiangxiong Zhang for helpful discussions on optimization-based limiters.
Sanghyun Lee is partially supported by the U.S. National Science Foundation under Grant DMS-2208402 and the U.S. Department of Energy, Office of Science, Energy Earthshots Initiatives under Award Number DE-SC-0024703. The work of Seungmin Lee is supported by the U.S. Department of Energy, Office of Science, Energy Earthshots Initiatives under Award Number DE-SC-0024703.
Chen Liu is partially supported by the U.S. National Science Foundation under Grant DMS-2513106.

\paragraph{Declaration of generative AI and AI-assisted technologies in the
manuscript preparation process}
During the preparation of this work, the authors used Claude (Anthropic) to assist with mathematical discussions and improve the language of the manuscript. After using this tool, the authors carefully reviewed and edited the content as needed and take full responsibility for the content of the published article.

\appendix

\section{Proof of Theorem~\ref{thm:error_bound_unconditional}}\label{app:unconditional_subopt}

Let us derive a bound on the bilinear form $a_\mathcal{D}$ when its first argument is not a broken polynomial. For any $v\in H^2(\setE_h)$, define the augmented norm $\norm{\cdot}{\ast}$ as follows
\begin{align*}
\norm{v}{\ast}^2 = \sum_{E\in\setE_h}\norm{\grad v}{L^2(E)}^2
+ \sum_{e\in\Gammah} h\norm{\avg{\grad v}}{L^2(e)}^2
+ \frac{\sigma}{h}\sum_{e\in\Gammah}\norm{\jump{v}}{L^2(e)}^2.
\end{align*}
\begin{lemma}\label{lem:aD_bound}
There exists a constant $C > 0$ independent of $h$, such that $\forall v\in H^2(\setE_h)$ and $\forall v_h\in V_h$, we have $\abs{a_\mathcal{D}(v, v_h)} \leq C\norm{v}{\ast}\norm{v_h}{\mathrm{DG}}$.
\end{lemma}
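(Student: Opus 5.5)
The plan is to bound the four terms of $a_\mathcal{D}(v,v_h)$ one at a time by Cauchy--Schwarz, choosing the $h$-weights so that each factor depending on $v$ is a piece of $\norm{v}{\ast}$ and each factor depending on $v_h$ is controlled by $\norm{v_h}{\mathrm{DG}}$. Only the term in which $v_h$ appears through a normal derivative on faces needs more than Cauchy--Schwarz: there we use a discrete trace and inverse inequality, which holds because $v_h$ is a broken polynomial on a quasi-uniform mesh.

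\textbf{Volume term.} We have $\big|\sum_E\int_E\grad v\cdot\grad v_h\big|\leq \big(\sum_E\norm{\grad v}{L^2(E)}^2\big)^{1/2}\big(\sum_E\norm{\grad v_h}{L^2(E)}^2\big)^{1/2}$, which is bounded by $\norm{v}{\ast}\norm{v_h}{\mathrm{DG}}$.

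\textbf{First consistency term.} Insert the weights $h^{1/2}$ and $h^{-1/2}$ and apply Cauchy--Schwarz on each face and then over faces:
\begin{align*}
\Big|\sum_{e\in\Gammah}\int_e\avg{\grad v\cdot\normal_e}\jump{v_h}\Big|
\leq \Big(\sum_e h\norm{\avg{\grad v}}{L^2(e)}^2\Big)^{1/2}\Big(\sum_e h^{-1}\norm{\jump{v_h}}{L^2(e)}^2\Big)^{1/2}.
\end{align*}
This is at most $\sigma^{-1/2}\norm{v}{\ast}\norm{v_h}{\mathrm{DG}}$.

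\textbf{Penalty term.} In the same way, $\frac{\sigma}{h}\sum_e\int_e\jump{v}\jump{v_h}\leq \big(\frac{\sigma}{h}\sum_e\norm{\jump{v}}{L^2(e)}^2\big)^{1/2}\big(\frac{\sigma}{h}\sum_e\norm{\jump{v_h}}{L^2(e)}^2\big)^{1/2}$, which is bounded by $\norm{v}{\ast}\norm{v_h}{\mathrm{DG}}$.

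\textbf{Symmetric term (the main obstacle).} The term $\sum_e\int_e\avg{\grad v_h\cdot\normal_e}\jump{v}$ has $v_h$ inside a face average of its gradient, which $\norm{v_h}{\mathrm{DG}}$ does not contain directly. We split it with weights $h^{1/2}$ and $h^{-1/2}$:
\begin{align*}
\Big|\sum_e\int_e\avg{\grad v_h\cdot\normal_e}\jump{v}\Big|
\leq \Big(\sum_e h\norm{\avg{\grad v_h}}{L^2(e)}^2\Big)^{1/2}\Big(\sum_e h^{-1}\norm{\jump{v}}{L^2(e)}^2\Big)^{1/2}.
\end{align*}
The second factor is at most $\sigma^{-1/2}\norm{v}{\ast}$. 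For the first factor we use the discrete trace inequality for polynomials, $\norm{\grad v_h}{L^2(e)}\leq C h^{-1/2}\norm{\grad v_h}{L^2(E)}$ for each face $e\subset\partial E$, which relies on mesh regularity and quasi-uniformity. Each element has a bounded number of faces, so this gives $\sum_e h\norm{\avg{\grad v_h}}{L^2(e)}^2\leq C\sum_E\norm{\grad v_h}{L^2(E)}^2\leq C\norm{v_h}{\mathrm{DG}}^2$.

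Summing the four bounds gives $\abs{a_\mathcal{D}(v,v_h)}\leq C\norm{v}{\ast}\norm{v_h}{\mathrm{DG}}$, with $C$ depending only on $\sigma$, $k$ and the shape-regularity constants, and independent of $h$.
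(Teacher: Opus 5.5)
Your proposal is correct and follows the paper's proof essentially line by line. You use Cauchy--Schwarz with weights $h^{1/2}$ and $h^{-1/2}$ on the volume, consistency, and penalty terms, and the discrete trace inequality for broken polynomials on the symmetry term, which gives the same four bounds with constants $1$, $\sigma^{-1/2}$, $1$, and $C\sigma^{-1/2}$.
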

\begin{proof}
Recalling the definition of the SIPG bilinear form $a_\mathcal{D}$, we have
\begin{multline*}
a_\mathcal{D}(v, v_h) = \sum_{E\in\setE_h}\int_E \grad v\cdot\grad v_h
- \sum_{e\in\Gammah}\int_e \avg{\grad{v}\cdot\normal_e}\jump{v_h} 
- \sum_{e\in\Gammah}\int_e \avg{\grad{v_h}\cdot\normal_e}\jump{v}
+ \frac{\sigma}{h}\sum_{e\in\Gammah}\int_e \jump{v}\jump{v_h}.
\end{multline*}
For the first, the second, and the last terms above, applying Cauchy--Schwarz's inequality, we obtain
\begin{align*}
\Big|\sum_{E\in\setE_h}\int_E \grad v\cdot\grad v_h\Big|
&\leq \Big(\sum_{E\in\setE_h}\norm{\grad v}{L^2(E)}^2\Big)^{1/2}
\Big(\sum_{E\in\setE_h}\norm{\grad v_h}{L^2(E)}^2\Big)^{1/2}
\leq \norm{v}{\ast}\norm{v_h}{\mathrm{DG}},\\
\Big|-\sum_{e\in\Gammah}\int_e \avg{\grad{v}\cdot\normal_e}\jump{v_h}\Big|
&\leq \Big(\sum_{e\in\Gammah} h\norm{\avg{\grad v}}{L^2(e)}^2\Big)^{1/2}
\Big(\sum_{e\in\Gammah} \frac{1}{h}\norm{\jump{v_h}}{L^2(e)}^2\Big)^{1/2}
\leq \frac{1}{\sqrt{\sigma}}\norm{v}{\ast}\norm{v_h}{\mathrm{DG}},\\
\Big|\frac{\sigma}{h}\sum_{e\in\Gammah}\int_e \jump{v}\jump{v_h}\Big|
&\leq \Big(\frac{\sigma}{h}\sum_{e\in\Gammah}\norm{\jump{v}}{L^2(e)}^2\Big)^{1/2}
\Big(\frac{\sigma}{h}\sum_{e\in\Gammah}\norm{\jump{v_h}}{L^2(e)}^2\Big)^{1/2}
\leq \norm{v}{\ast}\norm{v_h}{\mathrm{DG}}.
\end{align*}
For the third term on the right-hand side of $a_\mathcal{D}(v, v_h)$, we apply the discrete trace inequality and obtain
\begin{align*}
\Big|-\sum_{e\in\Gammah}\int_e \avg{\grad{v_h}\cdot\normal_e}\jump{v}\Big|
&\leq \Big(\sum_{e\in\Gammah} h\norm{\avg{\grad v_h}}{L^2(e)}^2\Big)^{1/2}
\Big(\sum_{e\in\Gammah} \frac{1}{h}\norm{\jump{v}}{L^2(e)}^2\Big)^{1/2} \\
&\leq \frac{C}{\sqrt{\sigma}}\Big(\sum_{E\in\setE_h}\norm{\grad v_h}{L^2(E)}^2\Big)^{1/2}\norm{v}{\ast}
\leq C\norm{v}{\ast}\norm{v_h}{\mathrm{DG}}.
\end{align*}
Combining the four bounds above, we conclude the proof.
\end{proof}

Let us define the approximation error $\rho^n = u^n - \pi_h u^n$, and the discrete errors $\varphi_h^n = u_h^n - \pi_h u^n$ and $\Phi_h^n = U_h^n - \pi_h u^n$. We have the following error equation
\begin{multline}\label{eq:phi_eq}
(\varphi_h^{n+1} - \Phi_h^n, v_h) + \tau D a_\mathcal{D}(\varphi_h^{n+1}, v_h)
+ \tau a_{\mathrm{adv}}(\varphi_h^{n+1}, v_h)
= (\rho^{n+1} - \rho^n, v_h) \\
+ \tau D a_\mathcal{D}(\rho^{n+1}, v_h) + \tau a_{\mathrm{adv}}(\rho^{n+1}, v_h)
+ \int_{t^n}^{t^{n+1}} (t-t^n)(\partial_{tt} u, v_h), \qquad \forall v_h\in V_h.
\end{multline}
The $L^2$ projection preserves cell averages, so $\overline{\pi_h u^n} = \overline{u^n}$, which lies in $\Lambda$ by Theorem~\ref{thm:mass_conservation}. Thus, $\mathcal{Q}\overline{\pi_h u^n} = \overline{\pi_h u^n}$ and $\mathcal{P}_h \pi_h u^n = \pi_h u^n$.
Recalling $U_h^n = \mathcal{P}_h u_h^n$ and applying Theorem~\ref{thm:P_nonexpansive}, we have
\begin{align}\label{eq:Phi_bound}
\norm{\Phi_h^n}{L^2} = \norm{\mathcal{P}_h u_h^n - \mathcal{P}_h \pi_h u^n}{L^2} \leq \norm{u_h^n - \pi_h u^n}{L^2} = \norm{\varphi_h^n}{L^2}.
\end{align}
Choosing $v_h = \varphi_h^{n+1}\in V_h^0$ in \eqref{eq:phi_eq}, by the coercivity of $a_\mathcal{D}$, the positivity of $a_{\mathrm{adv}}$, the definition of $\pi_h$ which gives $(\rho^{n+1} - \rho^n, \varphi_h^{n+1}) = 0$, and \eqref{eq:Phi_bound}, we have
\begin{multline}\label{eq:after_sub_uncond}
\frac{1}{2}\norm{\varphi_h^{n+1}}{L^2}^2 + \tau D\kappa\norm{\varphi_h^{n+1}}{\mathrm{DG}}^2
\leq \frac{1}{2}\norm{\varphi_h^{n}}{L^2}^2 + \tau D a_\mathcal{D}(\rho^{n+1}, \varphi_h^{n+1}) \\
+ \tau a_{\mathrm{adv}}(\rho^{n+1}, \varphi_h^{n+1}) + \int_{t^n}^{t^{n+1}} (t-t^n)(\partial_{tt} u, \varphi_h^{n+1}).
\end{multline}
For the second term on the right-hand side of \eqref{eq:after_sub_uncond}, using Lemma~\ref{lem:aD_bound}, the approximation property of the $L^2$ projection \eqref{eq:L2_proj_approx}, and the trace inequality, we have
\begin{align}\label{eq:uncond_bound_rhs1}
&\abs{a_\mathcal{D}(\rho^{n+1}, \varphi_h^{n+1})} 
\leq C\norm{\rho^{n+1}}{\ast}\norm{\varphi_h^{n+1}}{\mathrm{DG}} \nonumber\\
=\,& C\Big(\sum_{E\in\setE_h}\norm{\grad{\rho^{n+1}}}{L^2(E)}^2 + \sum_{e\in\Gammah} h\norm{\avg{\grad{\rho^{n+1}}}}{L^2(e)}^2 + \frac{\sigma}{h}\sum_{e\in\Gammah}\norm{\jump{\rho^{n+1}}}{L^2(e)}^2\Big)^{1/2} \norm{\varphi_h^{n+1}}{\mathrm{DG}} \nonumber\\
\leq\,& C h^{k}\abs{u^{n+1}}_{H^{k+1}} \norm{\varphi_h^{n+1}}{\mathrm{DG}}
\leq \frac{\kappa}{4}\norm{\varphi_h^{n+1}}{\mathrm{DG}}^2 + \frac{C}{\kappa}h^{2k}\abs{u^{n+1}}_{H^{k+1}}^2.
\end{align}
The third term on the right-hand side of \eqref{eq:after_sub_uncond} is bounded exactly as \eqref{eq:conv_diff_bound_rhs2}, with $\zeta^{n+1}$ replaced by $\rho^{n+1}$, and the fourth term exactly as \eqref{eq:conv_diff_bound_rhs3}, with $\theta_h^{n+1}$ replaced by $\varphi_h^{n+1}$. Using the approximation property of the $L^2$ projection \eqref{eq:L2_proj_approx}, we have
\begin{multline}\label{eq:uncond_bound_rhs23}
\tau\abs{a_{\mathrm{adv}}(\rho^{n+1}, \varphi_h^{n+1})} + \Big|\int_{t^n}^{t^{n+1}}(t-t^n)(\partial_{tt}u, \varphi_h^{n+1})\Big| \\
\leq \frac{\tau D\kappa}{2}\norm{\varphi_h^{n+1}}{\mathrm{DG}}^2 + \frac{C\tau h^{2k+2}}{D\kappa} + \frac{C\tau^2}{D\kappa}\norm{\partial_{tt}u}{L^2(t^n,t^{n+1}; L^2)}^2.
\end{multline}
Substituting \eqref{eq:uncond_bound_rhs1} and \eqref{eq:uncond_bound_rhs23} into \eqref{eq:after_sub_uncond}, and noting that $u_h^0 = \pi_h u^0$ gives $\varphi_h^0 = 0$, for any integer $1\leq N\leq\Nst$, summing over $n = 0,\cdots,N-1$, after some manipulations, we have
\begin{align}\label{eq:L2_bound_phi}
\norm{\varphi_h^{N}}{L^2}^2 + \frac{\tau D\kappa}{2 }\sum_{n=0}^{N-1}\norm{\varphi_h^{n+1}}{\mathrm{DG}}^2
\leq \frac{CTD}{\kappa}h^{2k}\norm{u}{L^\infty(0,T; H^{k+1})}^2 + \frac{CT}{D\kappa}h^{2k+2} + \frac{C\tau^2}{D\kappa}\norm{\partial_{tt}u}{L^2(0,T; L^2)}^2,
\end{align}
and therefore $\max_n\norm{\varphi_h^{n}}{L^2} \leq C(\tau + h^{k})$. 
Finally, by the error decomposition $u^n - U_h^n = \rho^n - \Phi_h^n$, \eqref{eq:Phi_bound} and the approximation property of $\pi_h$, we have
\begin{align*}
\norm{u^n - U_h^n}{L^2}
\leq \norm{\rho^n}{L^2} + \norm{\Phi_h^n}{L^2}
\leq Ch^{k+1}\abs{u^n}_{H^{k+1}} + \norm{\varphi_h^n}{L^2}
\leq C(\tau + h^{k}).
\end{align*}
Taking the maximum over $n$ concludes the proof.

\printbibliography

\end{document}